\def\z{\mathfrak{z}}
\def\b{\mathfrak{b}}
\def\u{\mathfrak{u}}
\def\f{\mathfrak{f}}
\def\k{\mathfrak{k}}
\def\g{\mathfrak{g}}
\def\h{\mathfrak{h}}
\def\l{\mathfrak{l}}
\def\m{\mathfrak{m}}
\def\n{\mathfrak{n}}
\def\v{\mathfrak{v}}
\def\a{\mathfrak{a}}
\def\o{\mathfrak{o}}
\def\p{\mathfrak{p}}
\def\q{\mathfrak{q}}
\def\s{\mathfrak{s}}
\def\hcx{\{J_{\alpha}\}}
\def\C{\mathbb{C}}
\def\F{\mathbb{F}}
\def\R{\mathbb{R}}
\def\N{\mathbb{N}}
\def\H{\mathbb H}
\def\J{\mathbb{J}}
\def\al{\alpha}
\def\ad{\operatorname{ad}{\:\!\!}}
\def\alt{\raise1pt\hbox{$\bigwedge$}}
\def\pint{\langle \cdotp,\cdotp \rangle }
\def\la{\langle}
\def\ra{\rangle}
\theoremstyle{plain}
\newtheorem{theorem}{\bf Theorem}[section]
\newtheorem{corollary}[theorem]{\bf Corollary}
\newtheorem{proposition}[theorem]{\bf Proposition}
\newtheorem{lemma}[theorem]{\bf Lemma}
\theoremstyle{definition}
\newtheorem{definition}[theorem]{\bf Definition}
\newtheorem{example}[theorem]{\bf Example}
\theoremstyle{remark}
\newtheorem{remark}[theorem]{\bf Remark}
\newcommand{\ri}{{\rm (i)}}
\newcommand{\rii}{{\rm (ii)}}
\newcommand{\riii}{{\rm (iii)}}
\newcommand{\riv}{{\rm (iv)}}
\newcommand{\rv}{{\rm (v)}}
\newcommand{\rlp}{{\rm (}}
\newcommand{\rrp}{{\rm )}}
\newcommand{\red}{\textcolor{red}}
\title{Complex structures on two-step nilpotent Lie groups}
\author{Mar\'ia Laura Barberis}
\email{mlbarberis@unc.edu.ar}
\date{January 2023}
\address{FAMAF, Universidad Nacional de C\'ordoba and CIEM-CONICET, Ciudad Universitaria, Av. Medina Allende s/n, X5000HUA C\'ordoba, Argentina}
\subjclass[2020]{17B30, 22E25,  53C15, 53C30}
\keywords{Complex structure, two-step nilpotent  Lie group}
\begin{document}

\begin{abstract} We give a characterization of the $2$-step nilpotent Lie algebras whose 
corresponding Lie groups admit a left invariant complex structure. This is done by considering separately the cases when the complex structure is 2-step or 3-step nilpotent in the sense of \cite{CFGU}.

We also study the Hermitian geometry 2-step nilpotent Lie groups.  
We show that if a left invariant Hermitian metric on such  Lie group is pluriclosed, then  the corresponding complex structure is 2-step nilpotent.  Moreover, we obtain a necessary and sufficient condition for such a metric to be pluriclosed in case the complex structure is abelian. 
	This allows us to   
	 show that pluriclosed metrics on nilpotent Lie algebras with one dimensional commutator ideal can only occur on trivial central extensions of the $3$-dimensional Heisenberg Lie algebra.   
	
    We show   that certain  Hermitian nilmanifolds constructed from compact semisimple irreducible Hermitian symmetric pairs are pluriclosed  with respect to a canonically defined abelian complex structure.

Finally, we give necessary and sufficient conditions for a naturally reductive Riemannian metric on a  nilmanifold to be Hermitian with respect to an abelian complex structure. We prove the analogue of this result in the hypercomplex case, 
	thereby obtaining  a distinguished  family of hyper-K\"ahler with torsion metrics.

\end{abstract}

\maketitle

\section{Introduction}
Two-step nilpotent Lie groups with left invariant geometric structures play an important role in differential geometry. They have provided examples and counterexamples to open questions and 
conjectures and have been extensively studied. They are the simplest version of Carnot groups, that is, simply connected Lie groups whose Lie algebra is stratified, a notion introduced by G. Folland  \cite{Fol}  which yielded a source of examples of interest in analysis \cite{Good},   optimal control theory \cite{Bro} and quasiconformal theory \cite{Pansu}. It was proved in \cite{G} that naturally reductive homogeneous nilmanifolds are at most $2$-step, and these were described in \cite{Lau} via representations of compact Lie groups. 
We refer to \cite{Eb1,Eb2} for general theory  of left invariant   Riemannian metrics on $2$-step nilpotent Lie groups. The study of    symplectic structures on $2$-step nilpotent Lie groups was carried out in  \cite{OS}, and  a general theory in the nilpotent case was developed in \cite{Gua}. 

The motivation for studying the structure of $2$-step nilpotent Lie groups endowed with a left invariant complex structure comes from several applications in differential geometry. For instance, 
it was proved in \cite{DF1} that $8$-dimensional nilpotent Lie groups carrying left invariant hypercomplex structures are necessarily $2$-step nilpotent and 
it has been shown in \cite{AN} that if a complex nilmanifold admits an invariant pluriclosed (or SKT) metric then it is a torus or  the corresponding  Lie group is $2$-step nilpotent. 

An important feature of complex nilmanifolds is that they have holomorphically trivial canonical bundle 
\cite{BDV,CG}. 
The classification of nilpotent Lie groups endowed with a left invariant complex structure is a difficult problem. There are some results in low dimensions \cite{COUV,LUV,LUV1,Sal,Ug}, but there is no general theory even in the case when the group is $2$-step nilpotent. In this case, 
some results were obtained in  \cite{De,DLV,GZZ,Z} and  when the complex structure is abelian \cite{Bar} or 
bi-invariant \cite{AD,BM,De}. 
There is a notion of nilpotent complex structure, introduced in \cite{CFGU} (see \S\ref{nilpot-cx-str} below). It was shown in \cite{GZZ} (see also \cite{Z}) that a complex structure on a $2$-step nilpotent Lie algebra is nilpotent of step $2$ or $3$. In this work, we will start by describing  the nilpotent complex structures of step $2$ and we will show that a nilpotent complex structure of step $3$ can be obtained from one of step $2$ and from an abelian complex structure.

\section{Preliminaries}
Let $G$ be a connected real Lie group with Lie algebra 
 $\g$.  A complex structure on $G$ is an automorphism
$J$ of the tangent bundle $TG$ satisfying $J^2=-I$ and the 
integrability condition $N_{J}(X,Y) =0$ for all $X,Y$ vector fields on $G,$ where $N_J$ is the Nijenhuis tensor:
\begin{equation}  N_{J}(X,Y) =
[X,Y]+J([JX,Y]+[X,JY])-[JX,JY].  \label{nijen} \end{equation}
  If
$J$ is left invariant, that is, if left translations by elements of $G$ are holomorphic maps, 
 then $J$ is determined 
by the value at the identity of $G$. Thus,
a left invariant complex
structure on $G$ 
amounts to a  complex structure 
on its Lie algebra $\g$, that is,
a real linear endomorphism 
$J$ of $\g$ satisfying $J^2 = -I$ and $N_J(x, y)=0$ for all $x, y$ in $\g.$ 

A complex structure $J$ on a Lie algebra $\g$ is called \textit{abelian} when $[Jx,Jy]=[x,y]$ for all $x,y\in\g$, and $J$ is called \textit{bi-invariant} when $J[x,y]=[x,Jy]$ for all $x,y\in\g$. 

A  Hermitian structure on $\g$ is a pair $(J,\pint )$ of a complex structure $J$ and a positive definite symmetric bilinear form $\pint$ compatible with $J$, that is, $\la Jx, Jy\ra =\la x, y\ra $ for all $x, y$ in $\g.$ 

Given a Hermitian structure $(J,\pint )$ on $\g$, there exists a unique $\g$-valued bilinear form $\nabla ^B$ on $\g$, $\nabla^B :\g \times \g \to \g$, $(x,y)\mapsto \nabla^B _x y$, satisfying:
\begin{equation}\label{eq:bismut}
  \nabla^B _x \text{ is skew-symmetric  $\forall \, x$}, \quad \nabla^BJ=0, \quad c(x,y,z):= \la  x, T^B(y,z)\ra \text{ is a 3-form},
\end{equation}
where $T^B(y,z)=\nabla^B_y z- \nabla^B_z y -[y,z]$ is the torsion of $\nabla^B$. If $G$ is a Lie group with Lie algebra $\g $ and $(J,g)$ is the  left invariant Hermitian structure on $G$ induced by  $(J,\pint )$, then the left invariant  affine connection determined by $\nabla^B$ is called the Bismut or Strominger connection of $(J, g)$ (see \cite{Bis,Str}). 

The torsion 3-form $c$ defined in \eqref{eq:bismut} 
 can be computed by:
\[ c(x, y, z) = -\la [Jx, Jy], Z\ra - \la [Jy, J z],x\ra - \la [J z, Jx], y \ra,  \quad x, y , z\in  \g , \]
 (see \cite[Equation (3.2)]{EFV}). The inner product $\pint$ is called \emph{pluriclosed} (or SKT) with respect to $J$ if $c$ is closed. 
The exterior derivative of $c$ is  given by:
\begin{align}   dc(w,u, y , z) = &\la [J [w,u], Jy], z\ra  + \la [Jy, J z], [w,u]\ra  + \la [J z, J [w,u]], y \ra \nonumber \\
& - \la [J [w,  y ], Ju], z\ra  - \la [Ju, J z], [w, y ]\ra  - \la [J z, J [w, y ]],u\ra \nonumber \\
& + \la[J [w, z], Ju], y\ra  + \la [Ju, Jy], [w, z]\ra  + \la [Jy, J [w, z]],u\ra \label{eq:dc}\\
& + \la [J [u, y ], Jw], z\ra  + \la [Jw, J z], [u, y ]\ra  + \la [J z, J [u, y ]],w\ra \nonumber  \\
& - \la [J [u, z], Jw], y\ra  - \la [Jw, Jy], [u, z]\ra  - \la [Jy, J [u, z]],w\ra \nonumber  \\
& + \la [J [y , z], Jw],u\ra  + \la [Jw, Ju], [y , z]\ra  + \la [Ju, J [y , z]],w\ra \nonumber  ,
\end{align}
for $w,u,y,z\in \g$.
\section{Nilpotent complex structures}\label{nilpot-cx-str}
Given a Lie algebra $\g$, the ascending central series of $\g$ is defined inductively by
\begin{equation} \g_0= 0 , \qquad \g_\ell=\{ x\in \g :[x , \n ]\subset \g_{\ell-1} \},\;\; \ell\geq 1 .
\end{equation}
Each $\g_\ell$ is an ideal of $\g$ and  $\g$ is called $s$-step  nilpotent,  $s\geq 1$, if $\g_s =\g$ and $\g_{s-1}\neq \g$. Note that $\g _1=\z,$ the center of $\g$, and $\g$ is $2$-step nilpotent if and only if  $\g$ is non-abelian and $\g '\subset\z$, where  $\g '=[\g , \g]$ is the commutator ideal of $\g$. 

Let $J$ be a complex structure on a  Lie algebra  $\g $ and define inductively  the following ascending series of $\g$ (see \cite{CFGU}):
\begin{equation}\label{eq:a_l} \a_0(J)=0, \qquad \a_{\ell}(J)=\left\{x\in \g : [x,\g ]\subset \a_{\ell-1}(J) \text{ and } [Jx,\g ]\subset \a_{\ell-1}(J) \right\}, \;\; \ell\geq 1.
\end{equation}
Note that $\a_{1}(J)=\z\cap J\z$ and 
each $\a_{\ell}(J)$ is a $J$-invariant ideal of $\g$. 
The complex structure $J$ on $\g $ is called {\em nilpotent} \cite{CFGU} if $\a_{t}(J)=\n $ for some positive integer $t$. We will say that a nilpotent complex structure  $J$ is {\em $t$-step } \cite{Z} if $t$ is the smallest such $s$. 
In \cite{GZZ} the positive integer $t$, which depends on $J$, was denoted by $\nu(J)$. 

It follows from the fact that $\a_{\ell}(J)\subset \g_\ell$ for all $\ell$, that if $J$ is a nilpotent complex structure on $\g$ then $\g$ is necessarily a nilpotent Lie algebra. On the other hand, there are nilpotent Lie algebras $\g$ such that no complex structure on $\g$ is nilpotent. This is the case when the Lie algebra has one dimensional center, as the next result shows.  
\begin{lemma}\label{lem:non-nilp-J} Let $\g$ be a nilpotent Lie algebra such that $\dim \z =1$. Then no complex structure on $\g$ is nilpotent.
    \end{lemma}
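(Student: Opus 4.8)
The plan is to show that the entire ascending series $\a_\ell(J)$ collapses to zero, so that it can never exhaust $\g$. The natural starting point is the identity $\a_1(J)=\z\cap J\z$ recorded just before the statement. Since $\dim\z=1$, both $\z$ and $J\z$ are lines, so their intersection is either $0$ or all of $\z$; the second alternative would force $\z$ to be $J$-invariant. But a line cannot be $J$-invariant: if $JZ=\lambda Z$ for a generator $Z$ of $\z$ and some $\lambda\in\R$, then $-Z=J^2Z=\lambda^2 Z$ would give $\lambda^2=-1$, which is impossible. Hence $\z\cap J\z=0$ and $\a_1(J)=0$.

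Next I would argue by induction that $\a_\ell(J)=0$ for every $\ell\ge 1$, the case $\ell=1$ being the step above. Assume $\a_{\ell-1}(J)=0$. By the defining formula \eqref{eq:a_l}, an element $x\in\a_\ell(J)$ satisfies $[x,\g]=0$ and $[Jx,\g]=0$, that is, both $x$ and $Jx$ lie in the center $\z$. The condition $Jx\in\z$ is equivalent to $x\in J^{-1}\z=J\z$ (using $J^{-1}=-J$), so $\a_\ell(J)=\z\cap J\z=\a_1(J)=0$. Thus the series never grows beyond $0$, and in particular $\a_t(J)=0\neq\g$ for all $t$; therefore no complex structure on $\g$ can be nilpotent.

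The only real content lies in the first paragraph: the observation that a one-dimensional (more generally, odd-dimensional) center cannot be preserved by $J$, since $J$ has no real eigenvalues. Once $\a_1(J)=0$ is established, the collapse of the whole series is a routine induction, because the vanishing of $\a_{\ell-1}(J)$ immediately confines any candidate $x\in\a_\ell(J)$, together with $Jx$, to the center. I do not expect any genuine obstacle here; the statement is essentially a structural consequence of the incompatibility between $J$ and a one-dimensional center.
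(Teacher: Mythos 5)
Your proof is correct and follows essentially the same route as the paper: both reduce the statement to showing $\a_1(J)=\z\cap J\z=0$ (the paper notes this intersection is $J$-invariant hence zero or even-dimensional, while you phrase the same fact via the nonexistence of a real eigenvalue of $J$) and then observe that the whole series collapses. Your explicit induction for the collapse, using $J^{-1}\z=J\z$ to identify $\a_\ell(J)$ with $\z\cap J\z$ once $\a_{\ell-1}(J)=0$, is just a fleshed-out version of the step the paper states without detail.
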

\begin{proof}
Let $J$ be a complex structure on $\g$. The ideal $\a_{1}(J)=\z\cap J\z $ is zero or even dimensional. Since $\z\cap J\z \subset \z$ and  $\dim \z =1$, it followss that $\z\cap J\z =0$, that is, $\a_{1}(J)=0$. This  implies  $\a_{\ell}(J)=0$ for all $\ell$, so $J$ is not nilpotent. 
\end{proof}

As a consequence of the above lemma, 
the Lie algebras considered in \cite{LUV1} do not admit nilpotent complex structures. 
Another example of a nilpotent Lie algebra such that all its complex structures are non-nilpotent can be found in \cite[Section 4.2]{Rol}. 
In contrast with this situation,  when $\n$ is a $2$-step nilpotent Lie algebra, any complex structure on $\n$ is  $t$-step with  $t=2$ or $3$ (see Theorem \ref{2or3} below). 
It should be pointed out that if $J$ is  $t$-step  then the sequence $\left\{\a_{\ell}(J)\right\}_{\ell\geq 0}$  increases strictly till $\ell=t$ (see \cite{CFGU} for details), and not every complex structure on $\n$ is nilpotent. Moreover, 
 examples of nilpotent Lie algebras carrying both  nilpotent and non-nilpotent 
complex structures can be found in \cite{CFP,Rol}. 

In contrast with Lemma \ref{lem:non-nilp-J}, 
it was proved in \cite[Proposition 3.3]{Rol} that when the Lie algebra $\n$ is $2$-step nilpotent, any complex structure $J$ on $\n$  is necessarily nilpotent, and the following   result states that, moreover, $J$ is $t$-step for $t=2$ or $3$ (see also \cite[Theorem 3.9]{Z}). 

\begin{theorem}[{\cite[Theorem 3.1]{GZZ}}] \label{2or3} If $\n$ is a $2$-step nilpotent Lie algebra and $J$ is a complex structure on $\n$, then $J$ is $t$-step for $t=2$ or $3$.  
\end{theorem}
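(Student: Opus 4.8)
The plan is to establish the two bounds $t\ge 2$ and $t\le 3$ directly from the defining series $\{\a_\ell(J)\}$ in \eqref{eq:a_l}, exploiting that $\n$ being $2$-step nilpotent forces every bracket to land in $\z$. The lower bound is immediate: since $\a_1(J)=\z\cap J\z\subset\z$ and a $2$-step nilpotent algebra is non-abelian (so $\z\neq\n$), we have $\a_1(J)\subsetneq\n$, whence $t\ge 2$. The entire content is therefore the upper bound $\a_3(J)=\n$; note that proving this simultaneously reproves that $J$ is nilpotent, so the cited result of Rollenske is not strictly needed for what follows.

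The key reduction is to show $\z\subset\a_2(J)$. Granting this, the conclusion is purely formal: for every $x\in\n$ we have $[x,\n]\subset\n'\subset\z\subset\a_2(J)$ because $\n$ is $2$-step, and likewise $[Jx,\n]\subset\n'\subset\a_2(J)$; hence both conditions in the definition \eqref{eq:a_l} of $\a_3(J)$ are satisfied for every $x$, i.e. $\a_3(J)=\n$. Combined with $t\ge 2$ this yields $t\in\{2,3\}$.

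It remains to prove $\z\subset\a_2(J)$, which is where the integrability of $J$ enters and is the one genuine step. Fix $z\in\z$. Then $[z,\n]=0\subset\a_1(J)$, so by \eqref{eq:a_l} I only need $[Jz,\n]\subset\a_1(J)=\z\cap J\z$. Evaluating the Nijenhuis tensor \eqref{nijen} on $z$ and an arbitrary $y\in\n$, centrality of $z$ annihilates the terms $[z,y]$ and $[z,Jy]$, leaving $J[Jz,y]-[Jz,Jy]=0$, that is $[Jz,Jy]=J[Jz,y]$. Since $\n$ is $2$-step the left-hand side lies in $\n'\subset\z$, so $J[Jz,y]\in\z$; applying $J$ gives $-[Jz,y]\in J\z$, hence $[Jz,y]\in J\z$. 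As $[Jz,y]\in\n'\subset\z$ as well, I conclude $[Jz,y]\in\z\cap J\z=\a_1(J)$ for all $y$, so $z\in\a_2(J)$. I expect this integrability computation to be the main (indeed essentially the only) obstacle; once $\z\subset\a_2(J)$ is in hand, the remaining assertions are bookkeeping with the ascending series.
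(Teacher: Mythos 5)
Your proof is correct. Note that the paper does not actually prove Theorem \ref{2or3}: it is quoted from \cite{GZZ}, with the nilpotency of $J$ attributed to \cite[Proposition 3.3]{Rol}, and Corollary \ref{cor:3-step} is offered only as an ingredient of an alternative argument. Your route is therefore genuinely different in that it is self-contained and elementary. The one substantive step --- using $N_J(z,y)=0$ for central $z$ to get $[Jz,Jy]=J[Jz,y]$, and hence $[Jz,y]\in\n'\cap J\z\subset\z\cap J\z=\a_1(J)$ --- is in essence the same Nijenhuis manipulation the paper carries out in Lemma \ref{idealJz} (there packaged as: $J\z_0$ is an abelian ideal whenever $\n'\subset\z_0\subset\z$, the relevant instance being $\z_0=\z$). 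But where the paper uses that computation only for structural lemmas and defers the dichotomy to the literature, you assemble it directly into $\z\subset\a_2(J)$, from which $\a_3(J)=\n$ follows formally since $[x,\n]\cup[Jx,\n]\subset\n'\subset\z$ for every $x$; together with the trivial lower bound $\a_1(J)\subset\z\subsetneq\n$ this gives $t\in\{2,3\}$ and simultaneously reproves that $J$ is nilpotent, so neither \cite{GZZ} nor \cite{Rol} is needed. What your approach buys is a short citation-free proof of the stated theorem; what the paper's development buys is the finer information (Lemma \ref{J-2-pasos}, Corollary \ref{cor:3-step}, Lemma \ref{ideal}) that it needs anyway for the subsequent classification of the two cases.
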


If $\n$ is $s$-step nilpotent, $\dim \n=2n$,  and $J$ is $t$-step,   then $s\leq t\leq n$ (see \cite{CFGU}). In particular, if $J$ is $2$-step  then  $\n$ is necessarily $2$-step nilpotent. This fact also follows from the next lemma, which gives  equivalent conditions for $J$ to be 2-step. 
We denote by $\n '$ the commutator ideal $\n '=[\n , \n ] $. 
\begin{lemma}\label{J-2-pasos} Let $\n$ be a  nilpotent Lie algebra with a complex structure $J$. The following conditions are equivalent:
\begin{enumerate}
\item[$\ri$] $J$  is 2-step,
\item[$\rii$]  $\n$ is 2-step nilpotent and $J\n '\subset \z$, 
\item[$\riii$] $\n '\subset \z\cap J\z$, 
\item[$\riv$] there exists a $J$-invariant ideal $\z_0$ of $\n$ such that $\n' \subset \z_0 \subset \z$.
\end{enumerate}
\end{lemma}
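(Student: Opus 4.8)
The plan is to prove the chain of equivalences $\ri \Leftrightarrow \rii \Leftrightarrow \riii \Leftrightarrow \riv$ by unwinding the definition of the ascending series $\{\a_\ell(J)\}$ and translating it into conditions on $\n'$, $\z$, and $J\z$. The central computational tool throughout is the identity $\a_1(J)=\z\cap J\z$, noted after equation \eqref{eq:a_l}, together with the observation that $J$ being $2$-step means precisely $\a_2(J)=\n$, i.e.\ every $x\in\n$ satisfies $[x,\n]\subset\a_1(J)$ and $[Jx,\n]\subset\a_1(J)$.

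\medskip

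First I would establish $\ri \Leftrightarrow \riii$, which I expect to be the conceptual heart of the argument. By definition, $\a_2(J)=\n$ says that for all $x\in\n$ we have $[x,\n]\subset \a_1(J)=\z\cap J\z$ and $[Jx,\n]\subset\z\cap J\z$. As $x$ ranges over $\n$, so does $Jx$, so the second condition is subsumed by the first, and $\a_2(J)=\n$ is equivalent to $[x,\n]\subset\z\cap J\z$ for all $x$, which is exactly $\n'\subset\z\cap J\z$. This gives $\ri\Leftrightarrow\riii$ directly. For $\riii\Rightarrow\rii$, note that $\n'\subset\z\cap J\z\subset\z$ forces $\n$ to be $2$-step nilpotent (by the characterization $\g'\subset\z$ recalled in \S\ref{nilpot-cx-str}), and $\n'\subset J\z$ means $J\n'\subset JJ\z = \z$ since $J^2=-I$; conversely, if $\n$ is $2$-step then $\n'\subset\z$, and $J\n'\subset\z$ gives $\n'\subset J^{-1}\z=J\z$ (again using $J^2=-I$), so $\n'\subset\z\cap J\z$, yielding $\rii\Leftrightarrow\riii$.

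\medskip

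It remains to incorporate $\riv$. The implication $\riii\Rightarrow\riv$ is the easy direction: I would simply take $\z_0=\z\cap J\z$, which is $J$-invariant and an ideal (it equals $\a_1(J)$, already known to be a $J$-invariant ideal), and is contained in $\z$, with $\n'\subset\z_0$ guaranteed by $\riii$. For the reverse $\riv\Rightarrow\riii$, suppose such a $\z_0$ exists. Since $\z_0\subset\z$ and $\z_0$ is $J$-invariant, we have $\z_0=J\z_0\subset J\z$ as well, hence $\z_0\subset\z\cap J\z$; combined with $\n'\subset\z_0$ this gives $\n'\subset\z\cap J\z$, which is $\riii$. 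This closes the cycle.

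\medskip

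The main obstacle, though it is minor, is keeping the $J$-invariance bookkeeping straight: the repeated passage between "$J\n'\subset\z$" and "$\n'\subset J\z$" relies on $J^2=-I$ (so that $J\z=J^{-1}\z$ and $J$ is a bijection), and one must be careful that $\z\cap J\z$ is genuinely $J$-invariant rather than merely mapped into $\z$. All of these are routine once the key equivalence $\ri\Leftrightarrow\riii$ is in hand; no integrability of $J$ (the Nijenhuis condition) is actually needed here, since the statement is purely about the interaction of $J$ with the lower/upper central structure. I would therefore organize the write-up around the single substantive step $\a_2(J)=\n \Leftrightarrow \n'\subset\z\cap J\z$ and treat the rest as short verifications.
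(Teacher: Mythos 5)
Your proof is correct and follows essentially the same route as the paper's: both reduce everything to the single substantive equivalence $\a_2(J)=\n \Leftrightarrow \n'\subset\z\cap J\z$ and then pass among (ii), (iii) and (iv) by short verifications using $J^2=-I$. The only immaterial difference is the witness for (iv): you take $\z_0=\z\cap J\z$, whereas the paper takes $\z_0=\n'+J\n'$; both choices work.
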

\begin{proof} Note that $J$ is 2-step if and only if $\a_{2}(J)=\n$, and this amounts to  $\n '\subset \z\cap J\z$. Therefore, (i) and (iii) are equivalent. 

Assume that (iii) holds. Then  $\n$ is 2-step nilpotent and  $J\n ' \subset \z\cap J\z \subset \z $, so   (ii) follows. Conversely, assume that (ii) is satisfied, then $\n '\subset J\z$ and also $\n '\subset \z$ since $\n$ is $2$-step nilpotent. Therefore,  $\n '\subset  \z\cap J\z$, that is, (iii) holds.



If (ii) holds, then $J\n '\subset \z$ and also $\n'\subset \z$ since $\n$ is $2$-step nilpotent. Therefore,  (iv) is satisfied for $\z_0= \n ' +J \n '$. On the other hand, 
if  (iv) is satisfied, then $J\n'\subset J\z_0\subset \z_0\subset \z$. Moreover, $\n'\subset \z _0 \subset \z$, hence, $\n'\subset  \z$, that is, $\n$ is 2-step nilpotent. Therefore, (ii) holds, and the lemma follows.  
\end{proof}

Given a 2-step complex structure, the ideal $\z_0$ from Lemma~\ref{J-2-pasos}~(iv) is essentially unique. To make this more precise, we need the following definition. 
\begin{definition}    
Let  $J$ be a complex structure on $\n$. We say that $(\n , J)$ {\em has no complex abelian factor} if  for any decomposition 
$\n=\a \oplus \n_1$ where $\a$ is a $J$-invariant abelian ideal  of $\n$ and 
 $\n _1$ is a $J$-invariant ideal of $\n$, then $\a =0$. \end{definition}
\begin{lemma}\label{lem:non-trivial-abelian} Let $J$ be a 2-step complex structure on a $2$-step nilpotent Lie algebra $\n$ and let $\z_0$ be a $J$-invariant ideal of $\n$ such that $\n'\subset \z_0\subset \z$.   Assume that $(\n ,J)$ has no  complex abelian factor. Then $\z_0=\n'+J\n'$. 
\end{lemma}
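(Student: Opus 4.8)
The plan is to show that if $\z_0$ were strictly larger than $\n'+J\n'$, then $\n$ would have a nonzero complex abelian factor, contradicting the hypothesis; so the whole argument is organized around splitting off a putative extra piece of $\z_0$ as a $J$-invariant abelian direct summand.

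First I would record the basic inclusions. Since $J$ is 2-step, Lemma~\ref{J-2-pasos}~(iii) gives $\n'\subset\z\cap J\z$, so in particular $J\n'\subset\z$; hence $\z_0':=\n'+J\n'$ is a $J$-invariant subspace of $\z$. Being contained in the center and containing the commutator ideal $\n'=[\n,\n]$, it is automatically an ideal (for any subspace $V\subset\z$ one has $[\n,V]=0\subset V$). Moreover, since $\z_0$ is $J$-invariant and contains $\n'$, it also contains $J\n'$, so $\z_0'\subset\z_0$. It remains to rule out strict inclusion.

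To this end I would fix a $J$-compatible inner product $\pint$ on $\n$ (which exists because $J^2=-I$) and set $\a:=\z_0\cap(\z_0')^{\perp}$, so that $\z_0=\z_0'\oplus\a$. Both $\z_0$ and $(\z_0')^{\perp}$ are $J$-invariant — the latter because $\pint$ is $J$-compatible and $\z_0'$ is $J$-invariant — so $\a$ is a $J$-invariant subspace, and since $\a\subset\z_0\subset\z$ it is central, hence an abelian $J$-invariant ideal. Next I would produce an ideal complement by putting $\n_1:=\a^{\perp}$, which is $J$-invariant because $\a$ is. As $\n'\subset\z_0'$ we have $(\z_0')^{\perp}\subset(\n')^{\perp}$, and $\a\subset(\z_0')^{\perp}$, whence $\a\perp\n'$, i.e. $\n'\subset\a^{\perp}=\n_1$. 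Because $\n_1$ contains the commutator ideal, $[\n,\n_1]\subset\n'\subset\n_1$, so $\n_1$ is a $J$-invariant ideal and $\n=\a\oplus\n_1$. This is exactly a decomposition of $\n$ into a $J$-invariant abelian ideal $\a$ and a $J$-invariant ideal $\n_1$; since $(\n,J)$ has no complex abelian factor, $\a=0$, and therefore $\z_0=\z_0'=\n'+J\n'$.

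The only delicate point is to arrange the complement $\n_1$ to be simultaneously $J$-invariant and an ideal. Choosing $\a$ inside the center and orthogonal to $\n'$ (via a $J$-compatible metric) handles both requirements at once: $J$-invariance of $\n_1$ descends from that of $\a$, while the ideal property is forced by the inclusion $\n'\subset\n_1$. I expect no genuinely hard computation here; the content is entirely in selecting the right $J$-invariant pieces so that the ``no complex abelian factor'' hypothesis can be invoked.
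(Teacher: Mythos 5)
Your proof is correct and follows essentially the same route as the paper: split off a $J$-invariant complement $\a$ of $\n'+J\n'$ in $\z_0$ and pair it with the $J$-invariant ideal $\n_1=(\n'+J\n')\oplus\z_0^{\perp}$ to contradict the no-complex-abelian-factor hypothesis. The only cosmetic difference is that you realize the complements explicitly via an auxiliary Hermitian inner product, whereas the paper simply invokes the existence of $J$-invariant complementary subspaces.
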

 \begin{proof} Since $\n'\subset \z_0$ and $\z_0$ is $J$-invariant we have that $\n'+J\n' \subset \z_0 $.  Assume that $\n'+J\n' \subsetneq \z_0 $, then there exists a non-zero complementary $J$-invariant subspace  $\a$ of $\n'+J\n'$ in $\z_0$. It follows that $\a$ is a complex abelian factor of $(\n ,J)$. In fact, $\a\subset \z_0\subset\z$, hence, $\a$ is an abelian ideal of $\n$ which is $J$-invariant. Moreover, let $\v$ be an arbitrary $J$-invariant subspace of $\n$ such that $\n=\z_0\oplus\v$, then  $\n_1=(\n'+J\n')\oplus \v$ is a $J$-invariant ideal of $\n$ and $\n=\a\oplus\n_1$ with $\a\neq 0$. This contradicts the fact that $(\n ,J)$ has no  complex abelian factor, and the lemma follows.
\end{proof}

We obtain next several consequences of Lemma \ref{J-2-pasos}. 

\begin{corollary}
\label{abelian-implies-2-pasos} 
Let $\n$ be a $2$-step nilpotent Lie algebra. If $J$ is an abelian or bi-invariant complex structure on $\n$, then $J$ is 2-step.
\end{corollary}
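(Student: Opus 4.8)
The plan is to reduce everything to Lemma \ref{J-2-pasos}. Since we are given that $\n$ is $2$-step nilpotent, the equivalence of conditions $\ri$ and $\rii$ in that lemma shows that it suffices to verify the single inclusion $J\n'\subset\z$. Thus in both cases the whole argument comes down to understanding how $J$ acts on the commutator ideal $\n'$, which we already know lies in $\z$.

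For the bi-invariant case I would argue directly from $J[x,y]=[x,Jy]$. Using skew-symmetry of the bracket this gives $J[x,y]=[Jx,y]$, which is again a commutator and hence lies in $\n'$; therefore $J\n'\subset\n'$. Applying $J$ once more and using $J^2=-I$ yields $\n'\subset J\n'$, so $\n'$ is $J$-invariant. In particular $J\n'=\n'\subset\z$, and Lemma \ref{J-2-pasos}~$\rii$ applies.

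For the abelian case the key step is to draw the correct consequence out of integrability. Substituting the abelian identity $[Jx,Jy]=[x,y]$ into the vanishing of the Nijenhuis tensor \eqref{nijen} collapses $N_J(x,y)=0$ to $J\bigl([Jx,y]+[x,Jy]\bigr)=0$, whence $[Jx,y]=-[x,Jy]$ for all $x,y\in\n$. I would then use this to show that $\z$ is $J$-invariant: if $z\in\z$ then for every $w\in\n$ one has $[Jz,w]=-[z,Jw]=0$, so $Jz\in\z$. Consequently $J\z=\z$, and since $\n'\subset\z$ we get $J\n'\subset J\z=\z$, so again Lemma \ref{J-2-pasos}~$\rii$ gives that $J$ is $2$-step.

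The only genuine content is this last $J$-invariance observation. I expect no real obstacle, because $2$-step nilpotency provides $\n'\subset\z$ at no cost and reduces the problem to the behaviour of $J$ on the already-central commutator ideal; in the bi-invariant case this behaviour is immediate, and in the abelian case it follows once integrability has been rewritten as $[Jx,y]=-[x,Jy]$.
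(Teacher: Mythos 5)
Your proposal is correct and follows essentially the same route as the paper: both reduce to condition (ii) of Lemma~\ref{J-2-pasos} by observing that an abelian or bi-invariant $J$ preserves a subspace containing $\n'$ inside $\z$ (the paper uses $J$-invariance of $\z$ in both cases, you use $J$-invariance of $\n'$ in the bi-invariant case, a trivial variant). The only difference is that you spell out the $J$-invariance verifications that the paper leaves as an observation.
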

\begin{proof} Observe that if $J$ is abelian or bi-invariant, then $\z$ is $J$-invariant.  Since $\n$ is $2$-step nilpotent,  condition (ii) of Lemma \ref{J-2-pasos} is satisfied and the corollary follows. 
\end{proof}

 The following corollary gives an alternative proof of  \cite[Theorem 3.1]{GZZ}.
\begin{corollary}\label{cor:3-step} Let $\n$ be a $2$-step nilpotent Lie algebra with a complex structure $J$. If $J$ is 3-step  then 
$J\z \not \subset \z$.  
\end{corollary}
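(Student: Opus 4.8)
The plan is to prove the contrapositive: I would show that if $J\z\subset\z$, then $J$ is necessarily 2-step, and then invoke the dichotomy of Theorem~\ref{2or3} to conclude. Indeed, Theorem~\ref{2or3} guarantees that any complex structure on a 2-step nilpotent Lie algebra is $t$-step with $t=2$ or $t=3$; so proving ``$J\z\subset\z \Rightarrow J$ is 2-step'' is logically equivalent to the asserted statement ``$J$ is 3-step $\Rightarrow J\z\not\subset\z$.''

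First I would record that the hypothesis $J\z\subset\z$ actually makes $\z$ a $J$-invariant subspace: since $J$ is injective we have $\dim J\z=\dim\z$, so $J\z\subset\z$ forces $J\z=\z$. Next, because $\n$ is 2-step nilpotent by assumption, we have $\n'\subset\z$; applying $J$ and using the $J$-invariance of $\z$ just established gives $J\n'\subset J\z\subset\z$. At this point condition $\rii$ of Lemma~\ref{J-2-pasos} is met, namely $\n$ is 2-step nilpotent and $J\n'\subset\z$, so that lemma yields that $J$ is 2-step. This is exactly the mechanism already used in the proof of Corollary~\ref{abelian-implies-2-pasos}, the only difference being that there the $J$-invariance of $\z$ came for free from the abelian or bi-invariant hypothesis, whereas here it comes directly from $J\z\subset\z$.

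The argument involves no substantial computation, so there is no genuine ``hard part''; the one conceptual point to get right is the logical packaging. Specifically, the statement is only interesting in combination with Theorem~\ref{2or3}: proving $J$ is 2-step does not by itself exclude higher steps in general, and it is precisely the restriction to 2-step nilpotent $\n$ (which collapses the possibilities to $t\in\{2,3\}$) that lets ``not 3-step'' be read as ``2-step.'' Once the contrapositive is set up this way and Lemma~\ref{J-2-pasos}~$\rii$ is applied, the corollary follows immediately, giving the promised alternative proof of \cite[Theorem 3.1]{GZZ}.
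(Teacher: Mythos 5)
Your argument is correct and is essentially the paper's own proof: both reduce to observing that $J\z\subset\z$ together with $\n'\subset\z$ gives $J\n'\subset\z$, and then apply Lemma~\ref{J-2-pasos}~\rii\ to conclude $J$ is 2-step (the paper phrases this as a contradiction rather than a contrapositive, which is immaterial). The only cosmetic difference is that your appeal to Theorem~\ref{2or3} is not actually needed, since being 2-step already excludes being 3-step by the definition of $t$-step as the \emph{smallest} such integer.
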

\begin{proof} Assume that, on the contrary, $J\z  \subset \z$. Then, since $\n'\subset \z$, we would have that  $J\n'  \subset \z$, and Lemma \ref{J-2-pasos} would imply that $J$ is 2-step, a contradiction.
\end{proof}


\ 


We denote by $\h _{2m+1}$  the $(2m+1)$-dimensional Heisenberg Lie algebra, which has a basis $\{ e_1, \ldots, e_m,  f_1, \ldots , f_m, z\}$  such that the only non-zero brackets are $[e_i,f_i]=-[f_i,e_i]=z$ for all $i$. This is a distinguished family of  2-step nilpotent Lie algebras.

We recall the following result, which is contained in \cite{Rol} (see  also  \cite[Proposition 2.2]{ABD}).
\begin{proposition}[{\cite[Proposition 3.6]{Rol}}] \label{prop:heisenberg}
If $\n$ is an even dimensional nilpotent Lie algebra with $\dim \n '=1$, then $\n$ is isomorphic to $\R^{2k+1}\oplus\h_{2n+1}$ and $\n$ admits a complex structure. Moreover, any complex structure on $\n$ is abelian. 
\end{proposition}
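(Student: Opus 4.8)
I would establish the three assertions in turn, beginning with the \emph{structure theorem}. Since $\dim\n'=1$, write $\n'=\R z$; the ideal $[\n,\n']$ is contained in the one-dimensional $\n'$, so if it were nonzero it would equal $\n'$, contradicting nilpotency. Hence $[\n,\n']=0$, $\n'\subset\z$, and $\n$ is $2$-step nilpotent. I would then encode the whole bracket in a single alternating form $\omega\in\Lambda^2\n^*$ defined by $[x,y]=\omega(x,y)\,z$; its radical is exactly $\z$, so $\omega$ descends to a nondegenerate alternating form $\bar\omega$ on $\n/\z$, whence $\dim(\n/\z)=2n$ is even. Lifting a symplectic basis of $(\n/\z,\bar\omega)$ to vectors $e_1,\dots,e_n,f_1,\dots,f_n\in\n$ produces, together with $z$, a copy of $\h_{2n+1}$ (an ideal, as it contains $\n'$); writing $\z=\R z\oplus\mathfrak{c}$ with $\mathfrak{c}$ central abelian gives $\n=\h_{2n+1}\oplus\mathfrak{c}$ as a direct sum of ideals. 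Finally, evenness of $\dim\n$ together with $\dim(\n/\z)=2n$ forces $\dim\z$ to be even, so $\dim\mathfrak{c}=\dim\z-1=2k+1$ is odd and $\n\cong\R^{2k+1}\oplus\h_{2n+1}$.

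For \emph{existence} I would use the decomposition $\n=W\oplus\z$ with $W=\mathrm{span}\{e_i,f_i\}$ and $\dim\z=2k+2$ even to write down an abelian complex structure explicitly: set $Je_i=f_i$, $Jf_i=-e_i$ on $W$, and let $J$ act on $\z$ by any linear complex structure (one exists since $\dim\z$ is even). Then $J^2=-I$, $J$ preserves both $W$ and $\z$, and $J|_W\in\mathrm{Sp}(W,\omega)$, so $\omega(Jx,Jy)=\omega(x,y)$; since all brackets involving a factor in $\z$ vanish, this yields $[Jx,Jy]=[x,y]$ for all $x,y$, i.e. $J$ is abelian and in particular integrable.

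The \emph{rigidity} statement is the heart of the matter, and the key observation is that $z$ and $Jz$ are linearly independent for any complex structure $J$, because $J^2=-I$ admits no real eigenvalue. As every bracket lies in $\n'=\R z$, all of $[x,y]$, $[Jx,Jy]$, $[Jx,y]$, $[x,Jy]$ are multiples of $z$, so the Nijenhuis tensor splits as $N_J(x,y)=\bigl([x,y]-[Jx,Jy]\bigr)+J\bigl([Jx,y]+[x,Jy]\bigr)$, a sum of a vector in $\R z$ and a vector in $\R Jz$. Integrability forces each summand to vanish separately, and the vanishing of the $\R z$-component is precisely the abelian condition $[Jx,Jy]=[x,y]$; hence every complex structure on $\n$ is abelian.

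I expect the bulk of the work to lie in the structure theorem — carrying out the symplectic reduction and checking that the resulting decomposition is a direct sum of ideals of the correct parity — even though each individual step there is standard. By contrast, the a priori surprising rigidity conclusion requires essentially no computation once one notices that $J^2=-I$ makes $z$ and $Jz$ independent, so that one-dimensionality of $\n'$ collapses the integrability condition onto the abelian condition.
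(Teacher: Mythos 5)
Your proof is correct. Note, however, that the paper does not prove this proposition at all: it is quoted verbatim from \cite[Proposition 3.6]{Rol} (see also \cite[Proposition 2.2]{ABD}), so there is no internal argument to compare against. Your three steps all check out: the structure part is the standard symplectic reduction (the radical of $\omega$ is exactly $\z$, the quotient form is nondegenerate hence $\n/\z$ is even dimensional, and the parity bookkeeping correctly forces $\dim\mathfrak{c}=2k+1$); the existence part is fine since $\dim\z=2k+2$ is even and $J|_W\in\mathrm{Sp}(W,\omega)$ together with the vanishing of brackets against $\z$ gives the abelian identity, which in turn implies $N_J=0$ via $[Jx,y]=[J^2x,Jy]=-[x,Jy]$. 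Your rigidity argument — that $N_J(x,y)$ decomposes as a component in $\R z$ plus a component in $\R Jz$, and $z,Jz$ are independent because $J$ has no real eigenvalue — is exactly the special case $\dim\n'=1$ of the more general fact the paper later quotes from \cite[Lemma 3.1 (viii)]{Rol}, namely that $\n'\cap J\n'=0$ forces $J$ to be abelian (a one-dimensional $\n'$ cannot contain a nonzero $J$-invariant, hence even-dimensional, subspace). So your route is the expected one, just carried out directly rather than by citation.
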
 
 
\begin{corollary}\label{cor:heisenberg} If $\n$ is an even dimensional nilpotent Lie algebra with $\dim \n '=1$, any complex structure on $\n$ is 2-step. 
\end{corollary}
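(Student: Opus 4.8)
The plan is to derive this as a direct consequence of the two preceding results, Proposition~\ref{prop:heisenberg} and Corollary~\ref{abelian-implies-2-pasos}, with essentially no new computation required. The strategy is to observe that the hypotheses force $\n$ to be $2$-step nilpotent and every complex structure on it to be abelian, at which point Corollary~\ref{abelian-implies-2-pasos} applies verbatim.

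Concretely, I would first note that $\dim \n' = 1$ forces $\n$ to be non-abelian. Proposition~\ref{prop:heisenberg} then tells us two things simultaneously: that $\n$ is isomorphic to $\R^{2k+1}\oplus\h_{2n+1}$, and that \emph{every} complex structure $J$ on $\n$ is abelian. The first conclusion identifies $\n$ with a trivial central extension of a Heisenberg algebra; since the Heisenberg factor $\h_{2n+1}$ is $2$-step nilpotent and the $\R^{2k+1}$ summand is central, the direct sum is $2$-step nilpotent (equivalently, $\n' \subset \z$, which is immediate from $\dim\n'=1$ together with non-abelianness). Thus both standing hypotheses of Corollary~\ref{abelian-implies-2-pasos} are met: $\n$ is $2$-step nilpotent and $J$ is abelian.

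Finally I would invoke Corollary~\ref{abelian-implies-2-pasos} to conclude that $J$ is $2$-step, completing the proof. The only genuinely substantive input is the abelianness of all complex structures, and that is precisely the content of the cited Proposition~\ref{prop:heisenberg}, which I am taking as given; the main obstacle, were it not available, would lie entirely in establishing that an even-dimensional nilpotent Lie algebra with one-dimensional commutator admits only abelian complex structures. Granting that result, the present corollary is an immediate synthesis.
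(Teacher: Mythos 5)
Your proposal is correct and follows exactly the paper's own argument: invoke Proposition~\ref{prop:heisenberg} to get that $\n\cong\R^{2k+1}\oplus\h_{2n+1}$ is $2$-step nilpotent and that every complex structure on it is abelian, then conclude via Corollary~\ref{abelian-implies-2-pasos}. No issues to report.
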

\begin{proof} Let $J$ be a complex structure on $\n$, Proposition \ref{prop:heisenberg} implies that $\n$ is isomorphic to $\R^{2k+1}\oplus\h_{2n+1}$, which is 2-step nilpotent,  and $J$ is abelian. The corollary follows from Corollary \ref{abelian-implies-2-pasos}.
\end{proof}

\section{Complex  structures on $2$-step nilpotent Lie algebras}

\begin{lemma}\label{idealJz} Let $J$ be a complex structure on a $2$-step nilpotent Lie algebra $\n$ with center $\z$ and 
commutator ideal $\n'$. If $\z_0$ is an ideal of $\n$ such that $\n' \subset \z_0 \subset \z$, then $J\z_0$ is an abelian ideal of $\n$. 
\end{lemma}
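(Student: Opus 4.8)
The plan is to verify the two defining properties of an abelian ideal separately, extracting both directly from the integrability condition $N_J=0$ specialized to central arguments. The single observation driving everything is that each element of $\z_0$ lies in the center $\z$, so in the Nijenhuis tensor \eqref{nijen} any bracket containing such an element as an \emph{undifferentiated} entry (i.e.\ not preceded by $J$) automatically vanishes.

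First I would record the one identity that does all the work. Fix $x\in\z_0$ and an arbitrary $w\in\n$. Since $x\in\z$ we have $[x,w]=0$ and $[x,Jw]=0$, so \eqref{nijen} collapses to
\[ 0 = N_J(x,w) = J[Jx,w]-[Jx,Jw], \]
that is, $[Jx,Jw]=J[Jx,w]$. This relates the two $J$-twisted brackets to one another and is the hinge of both parts.

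Next comes the ideal property $[J\z_0,\n]\subset J\z_0$. Because $\n$ is $2$-step nilpotent, the commutator $[Jx,w]$ lies in $\n'\subset\z_0$, hence $J[Jx,w]\in J\z_0$; the identity above then gives $[Jx,Jw]\in J\z_0$. Since $J$ is invertible, $Jw$ runs over all of $\n$ as $w$ does, so $[Jx,v]\in J\z_0$ for every $v\in\n$, which is exactly the claim. For the abelian property $[J\z_0,J\z_0]=0$, I would take $x,y\in\z_0\subset\z$ both central: then all three inner brackets in $N_J(x,y)$ vanish ($[x,y]=[Jx,y]=[x,Jy]=0$), leaving $0=N_J(x,y)=-[Jx,Jy]$, so $[Jx,Jy]=0$. (This also drops out of the key identity by setting $w=Jy$ and using $[Jx,y]=0$.)

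I do not anticipate a genuine obstacle: the argument is a short unwinding of integrability rather than a computation to be ground through. The only point that demands care—and the step I would flag as most error-prone—is the bookkeeping of which bracket entries are central versus $J$-twisted, together with the simple but essential use, in the ideal step, that $2$-step nilpotency forces every commutator into $\n'\subset\z_0$.
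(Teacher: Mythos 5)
Your proposal is correct and follows essentially the same route as the paper: both parts come from evaluating $N_J=0$ with one (or two) central arguments, using that brackets with an undifferentiated entry from $\z_0\subset\z$ vanish and that $[\n,\n]=\n'\subset\z_0$ by $2$-step nilpotency. The only cosmetic difference is that the paper isolates $[x,Jy]=-J[Jx,Jy]$ for $y\in\z_0$ directly, whereas you isolate $[Jx,Jw]=J[Jx,w]$ and then invoke the surjectivity of $J$; the content is identical.
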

\begin{proof}
Let $J$ be a complex structure on $\n$ and $x,y\in \z_0$. Then 
\[ [Jx,Jy]=[x,y]+J[Jx,y]+J[x,Jy]=0,
\]
hence, $J\z_0$ is abelian. To show that it is an ideal, take $x\in \n$, $y\in\z_0$ and compute:
\[ [x, Jy]=J[x,y]-[Jx,y]-J[Jx,Jy]=-J[Jx,Jy],
\]
and since $[Jx,Jy]\in \n'\subset \z_0$, then $[x, Jy]\in J\z_0$.
\end{proof}

The above lemma motivates the study of abelian ideals in  $2$-step nilpotent Lie algebras (see Lemma \ref{ideal} below).  

The next simple lemma holds for arbitrary Lie algebras  (not necessarily nilpotent). 
\begin{lemma}\label{lem:ideal-in-center} Let $\g$ be a Lie algebra  and let $\a$  be an ideal of $\g$  such that $\a\cap\g'=0 $, then $\a \subset \z$.  In particular, $\a$ is abelian.
\end{lemma}
\begin{proof} Let $\a$ be as in the   statement, then:
\[ [\a , \g] \subset \a \cap \g ' =0, \; \text{ hence }\; \a \subset \z . \]
\end{proof}
The above lemma allows to show that non-zero abelian ideals of a $2$-step nilpotent Lie algebra have non-trivial intersection with its center. 

\begin{lemma}\label{ideal} Let $\n$ be a $2$-step nilpotent  Lie algebra  with center $\z$ and let 
   $\a \neq 0$ be  an abelian ideal of $\n$.  Then $\a\cap \z \neq 0$ and there are two possibilities:
\begin{enumerate}
\item[$\ri$] $\a\subset \z$, that is, $\a\cap \z = \a$,  
\item[$\rii$] $\a\not\subset \z$ and for any complementary subspace $\u  $ of $\a\cap \z$ in $\a$ it follows that $\u\cap \z= 0$, $\u$ is abelian and  $[\u,\n]\subset \a\cap \n '$. In particular,   $\a\cap \n ' \neq 0$. 
\end{enumerate}
\end{lemma}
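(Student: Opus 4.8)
The plan is to reduce everything to Lemma \ref{lem:ideal-in-center} together with the defining property $\n'\subset\z$ of a $2$-step nilpotent Lie algebra. First I would dispose of the existence claim $\a\cap\z\neq 0$ by splitting on whether $\a\subset\z$. If $\a\subset\z$, then trivially $\a\cap\z=\a\neq 0$ and we land in case (i). If instead $\a\not\subset\z$, I will produce a nonzero element of $\a\cap\z$ explicitly, as explained below; this simultaneously proves $\a\cap\z\neq 0$ and establishes case (ii).

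So assume $\a\not\subset\z$ and fix an arbitrary complementary subspace $\u$ of $\a\cap\z$ in $\a$, so that $\a=(\a\cap\z)\oplus\u$. Since $\a\not\subset\z$ forces $\a\cap\z\subsetneq\a$, the subspace $\u$ is nonzero. The assertions $\u\cap\z=0$ and the abelianity of $\u$ are immediate: from $\u\subset\a$ we get $\u\cap\z=\u\cap(\a\cap\z)=0$, and any subspace of the abelian ideal $\a$ is abelian.

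The key computation is the inclusion $[\u,\n]\subset\a\cap\n'$. On one hand $\u\subset\a$ and $\a$ is an ideal, so $[\u,\n]\subset[\a,\n]\subset\a$; on the other hand $[\u,\n]\subset[\n,\n]=\n'$. This yields $[\u,\n]\subset\a\cap\n'$. To conclude $\a\cap\n'\neq 0$, I would pick a nonzero $u\in\u$; since $\u\cap\z=0$, the element $u$ is not central, so $[u,x]\neq 0$ for some $x\in\n$, and this nonzero bracket lies in $\a\cap\n'$. Finally, because $\n$ is $2$-step we have $\n'\subset\z$, whence $0\neq\a\cap\n'\subset\a\cap\z$, completing both the existence claim and case (ii).

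I do not expect a genuine obstacle here: the statement is essentially a bookkeeping consequence of Lemma \ref{lem:ideal-in-center} and the $2$-step hypothesis. The only point requiring a moment's care is that the assertions in (ii) must hold for \emph{every} complement $\u$; this is guaranteed because each of them follows solely from the two properties $\u\subset\a$ and $\u\cap(\a\cap\z)=0$ common to all such complements, rather than from any particular choice.
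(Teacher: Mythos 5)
Your proof is correct, and the treatment of case (ii) coincides with the paper's: for any complement $\u$ you get $\u\cap\z=0$ and abelianity for free, $[\u,\n]\subset\a\cap\n'$ because $\a$ is an ideal, and $\a\cap\n'\neq0$ because a nonzero (hence non-central) element of $\u$ has a nonzero bracket. Where you diverge is in establishing the initial claim $\a\cap\z\neq 0$: the paper proves it first, by contradiction, invoking Lemma \ref{lem:ideal-in-center} (if $\a\cap\z=0$ then $\a\cap\n'=0$, so $\a\subset\z$, absurd), and only then turns to the dichotomy. You instead fold it into the case split, deducing it in the nontrivial case from $\a\cap\n'\neq 0$ together with $\n'\subset\z$. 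Your route is self-contained and arguably slightly cleaner, since it never actually needs Lemma \ref{lem:ideal-in-center} --- your explicit "non-central element gives a nonzero bracket in $\a\cap\n'$" step is just that lemma's contrapositive used pointwise. One small presentational inconsistency: you announce at the outset (and again at the end) that everything reduces to Lemma \ref{lem:ideal-in-center}, but your argument never invokes it; you should either cite it where the contrapositive is used or drop the claim.
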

\begin{proof} 
Let $\a \neq 0$ be  an abelian ideal of $\n$. We show first that  $\a\cap \z \neq  0$. 
  Assume that, on the contrary, 
$\a\cap \z = 0$. Then,  using that $\n '\subset \z$ (since $\n$ is $2$-step nilpotent),  we have 
$\a \cap \n'\subset \a \cap \z= 0$.    Lemma \ref{lem:ideal-in-center}  implies that  $\a \subset \z$, that is, 
$\a\cap \z = \a \neq 0$, a contradiction. 


It remains to show that, in case $\a\not\subset \z$, the conditions in (ii) are satisfied. In fact, 
if $\a\not\subset \z$ then $\a\cap \z \neq \a$ and let $\u \neq 0$ be any complementary subspace of $\a\cap \z$ in $\a$. 
Clearly, $\u \cap \z =0$ and $\u$ is abelian since $\u\subset\a$. 
   Since  $\a$ is an ideal, we have that $[\u,\n]\subset \a\cap \n '$. Moreover, $\u\cap \z= 0$, hence, $[\u,\n]\neq 0$, and (ii) follows.
\end{proof}

 A complex structure $J$ on a nilpotent Lie algebra $\n$ with center $\z$ is called \textit{strongly non-nilpotent} when $\a_1(J)= J\z \cap\z=0$    (see \cite{CFGU1}). 
If $\n$ is $2$-step nilpotent,  by applying Lemma \ref{ideal} with $\a=J\z$, which according to Lemma \ref{idealJz} is an ideal,   it follows that   $J\z \cap\z\neq0$, that is, $J$ is not strongly non-nilpotent. This gives a simple proof of an already known fact        
\cite[Corollary 3.6]{LUV}. 
\begin{corollary}[{\cite[Corollary 3.6]{LUV}}] \label{non-strongly} Let $\n$ be a $2$-step nilpotent Lie algebra. Then, any complex   structure $J$ on $\n$ satisfies $J\z \cap\z\neq 0$, that is, $J$ is not strongly non-nilpotent. In particular, $\dim \z\geq 2$.
\end{corollary}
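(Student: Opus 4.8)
The plan is to exhibit $J\z$ as a non-zero abelian ideal and then apply Lemma~\ref{ideal} to it directly. Since $\n$ is $2$-step nilpotent it is non-abelian, so $\n'\neq 0$, and the inclusion $\n'\subset\z$ forces $\z\neq 0$; consequently $J\z\neq 0$ as well. First I would apply Lemma~\ref{idealJz} with $\z_0=\z$: the chain $\n'\subset\z\subset\z$ trivially satisfies its hypotheses, so $J\z$ is an abelian ideal of $\n$.

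Next I would feed $\a=J\z$ into Lemma~\ref{ideal}. Since $\a$ is a non-zero abelian ideal of the $2$-step nilpotent Lie algebra $\n$, that lemma yields $\a\cap\z\neq 0$, that is, $J\z\cap\z\neq 0$. Recalling that $\a_1(J)=\z\cap J\z$, this says precisely that $\a_1(J)\neq 0$, so $J$ is not strongly non-nilpotent, which is the first assertion.

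For the final claim on the dimension of the center, I would use that $\a_1(J)=\z\cap J\z$ is $J$-invariant, as noted earlier for every term of the series $\{\a_\ell(J)\}$. A non-zero $J$-invariant subspace carries a complex structure and is therefore even-dimensional, whence $\dim(\z\cap J\z)\geq 2$. Since $\z\cap J\z\subset\z$, this gives $\dim\z\geq 2$.

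There is no genuine obstacle here, since all the substantive work has been isolated into Lemmas~\ref{idealJz} and~\ref{ideal}. The only points requiring care are verifying that $J\z$ is actually non-zero (which rests on $\n$ being non-abelian) and invoking the $J$-invariance of $\a_1(J)$ in order to upgrade ``non-zero'' to ``even-dimensional'' for the bound on $\dim\z$.
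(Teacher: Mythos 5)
Your proof is correct and follows exactly the route the paper takes: Lemma~\ref{idealJz} with $\z_0=\z$ shows $J\z$ is an abelian ideal, and Lemma~\ref{ideal} applied to $\a=J\z$ gives $J\z\cap\z\neq 0$. Your added remarks on why $J\z\neq 0$ and why the $J$-invariance of $\z\cap J\z$ yields $\dim\z\geq 2$ are sound and consistent with the paper's earlier observation that $\a_1(J)=\z\cap J\z$ is zero or even dimensional.
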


\begin{corollary} \label{cor:n'2} Let $\n$ be a $2$-step nilpotent Lie algebra such that  $\dim \z=2$. Then, any complex   structure $J$ on $\n$ is 2-step. In particular, if $\n '=\z$ and  $b_1=\dim \n -2$, where $b_1$ is the first Betti number of $\n$, then any complex structure on $\n$ is 2-step.
    \end{corollary}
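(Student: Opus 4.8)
The plan is to reduce the statement to Lemma~\ref{J-2-pasos} by showing that the hypothesis $\dim\z=2$ forces the center to be $J$-invariant. First I would invoke Corollary~\ref{non-strongly}, which guarantees that $\a_1(J)=\z\cap J\z\neq 0$. Since $\a_1(J)$ is a $J$-invariant subspace, it is even dimensional (a real $J$-invariant subspace is a complex vector space because $J^2=-I$), so $\dim(\z\cap J\z)\geq 2$. On the other hand $\z\cap J\z\subset\z$ and by hypothesis $\dim\z=2$, which squeezes $\z\cap J\z=\z$. In particular $\z\subset J\z$, and comparing dimensions ($\dim J\z=\dim\z$) yields $J\z=\z$; that is, $\z$ is $J$-invariant.

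Once $\z$ is $J$-invariant the rest is formal. Because $\n$ is $2$-step nilpotent we have $\n'\subset\z$, and therefore $J\n'\subset J\z=\z$. Thus condition~(ii) of Lemma~\ref{J-2-pasos} is satisfied ($\n$ is $2$-step nilpotent and $J\n'\subset\z$), and I conclude that $J$ is $2$-step. (Equivalently, one could take $\z_0=\z$ in condition~(iv).) This proves the first assertion.

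For the ``in particular'' statement I would reduce to the case just treated by showing that the stated hypotheses force $\dim\z=2$. Recall that for a nilpotent Lie algebra the first Betti number satisfies $b_1=\dim H^1(\n;\R)=\dim(\n/\n')=\dim\n-\dim\n'$. Hence the assumption $b_1=\dim\n-2$ is equivalent to $\dim\n'=2$, and combined with $\n'=\z$ this gives $\dim\z=\dim\n'=2$. The first part of the corollary then applies verbatim, so every complex structure on $\n$ is $2$-step.

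I do not expect a genuine obstacle: the whole argument hinges on the single dimension count in the first paragraph, where the even-dimensionality of the $J$-invariant ideal $\z\cap J\z$, together with $\dim\z=2$ and the nonvanishing from Corollary~\ref{non-strongly}, pins it down to all of $\z$. The only point requiring a small amount of care is citing the standard identity $b_1=\dim\n-\dim\n'$ for nilpotent Lie algebras when converting the Betti-number hypothesis into the condition $\dim\n'=2$.
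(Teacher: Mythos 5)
Your proof is correct and follows essentially the same route as the paper: both invoke Corollary~\ref{non-strongly} to get $\z\cap J\z\neq 0$, deduce from $\dim\z=2$ that $\z$ is $J$-invariant, and then apply Lemma~\ref{J-2-pasos}, with the ``in particular'' part reduced via $b_1=\dim\n-\dim\n'$. You merely make explicit the even-dimensionality argument that the paper leaves implicit.
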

    \begin{proof}
        It follows from Corollary \ref{non-strongly} that $J\z \cap\z\neq 0$, therefore, $\z$ is $J$-invariant. Since $\n '\subset \z$, then $J\n '\subset J\z\subset \z$, that is, $J$ is 2-step. The last assertion follows since $b_1=\dim \n -2$ implies that $\dim \n '=2$, hence $ \dim \z= 2$.
    \end{proof}

From now on, given a complex structure $J$ on $\n$ we set   $\n'_J:= \n ' \cap J\n '$.   The following   fact will be needed in \S\ref{sec-3-step}. 
\begin{corollary}\label{cor:3-pasos}
    Let $J$ be a 3-step complex structure on a 2-step nilpotent Lie algebra $\n$. Then $\n '_J\neq 0$.  
    \end{corollary}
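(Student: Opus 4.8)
The plan is to prove the contrapositive: I will show that if $\n'_J = \n' \cap J\n' = 0$, then $J$ must be $2$-step, contradicting the hypothesis that $J$ is $3$-step. Since Lemma~\ref{J-2-pasos} characterizes $2$-step complex structures via the condition $\n' \subset \z \cap J\z$ (equivalently $J\n' \subset \z$), the goal reduces to establishing that $\n' \cap J\n' = 0$ forces $J\n' \subset \z$ on a $2$-step nilpotent algebra.

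\medskip

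First I would recall the key structural input already available: by Lemma~\ref{idealJz}, taking $\z_0 = \n'$ itself (which satisfies $\n' \subset \n' \subset \z$), the subspace $J\n'$ is an \emph{abelian ideal} of $\n$. This lets me apply Lemma~\ref{ideal} to the abelian ideal $\a = J\n'$. That lemma gives a dichotomy: either $J\n' \subset \z$ (case (i)), or $J\n' \not\subset \z$ (case (ii)), in which case $\a \cap \n' = J\n' \cap \n' \neq 0$. The second half of case (ii) is precisely the assertion $\n'_J \neq 0$. So the structure of the argument is that Lemma~\ref{ideal} applied to $J\n'$ yields: either $J\n' \subset \z$, or $\n' \cap J\n' \neq 0$.

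\medskip

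Now I combine this with the hypothesis. Suppose for contradiction that $\n'_J = \n' \cap J\n' = 0$. Then case (ii) of Lemma~\ref{ideal} is impossible (it would force $J\n' \cap \n' \neq 0$), so we must be in case (i), namely $J\n' \subset \z$. But $\n' \subset \z$ holds automatically since $\n$ is $2$-step nilpotent, so we obtain $\n' \subset \z \cap J\z$, which by Lemma~\ref{J-2-pasos}\,$\rlp\riii\rrp$ means $J$ is $2$-step. This contradicts the assumption that $J$ is $3$-step, since a complex structure cannot be both (the step $t = \nu(J)$ is unique). Hence $\n'_J \neq 0$, as desired.

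\medskip

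I do not anticipate a serious obstacle here, as the corollary is essentially a bookkeeping consequence of the two lemmas already proved. The one point requiring a little care is the verification that $J\n'$ qualifies as an abelian ideal so that Lemma~\ref{ideal} applies — but this is exactly Lemma~\ref{idealJz} with $\z_0 = \n'$, so it is immediate. A minor alternative worth noting: rather than invoking Lemma~\ref{ideal} directly, one could argue that if $J\n' \not\subset \z$ then $J$ fails to be $2$-step, hence (by Theorem~\ref{2or3}) is $3$-step, and then separately extract the intersection conclusion; but routing everything through Lemma~\ref{ideal}~$\rlp\rii\rrp$ is cleaner and yields $\n' \cap J\n' \neq 0$ in one stroke.
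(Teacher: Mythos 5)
Your proof is correct and follows essentially the same route as the paper: both arguments use Lemma~\ref{idealJz} (with $\z_0=\n'$) to see that $J\n'$ is an abelian ideal, Lemma~\ref{J-2-pasos} to relate the $3$-step hypothesis to $J\n'\not\subset\z$, and the dichotomy of Lemma~\ref{ideal} applied to $\a=J\n'$ to conclude $\n'\cap J\n'\neq 0$. The only difference is that you phrase it as a contrapositive while the paper applies Lemma~\ref{ideal}~(ii) directly, which is purely a matter of presentation.
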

\begin{proof}
Let $J$ be a 3-step complex structure on  $\n$. Lemmas  \ref{J-2-pasos} and \ref{idealJz} imply that $J\n '$ is an abelian ideal not contained in $\z$.  The corollary follows from Lemma \ref{ideal} (ii) applied to the abelian ideal $\a=J\n '$.
\end{proof}

\

We show next that 3-step complex structures cannot occur on 2-step nilpotent Lie algebras with commutator ideal of dimension 1 or 2.
\begin{corollary}\label{cor:J-3-pasos}
    Assume that $\n$ is a 2-step nilpotent Lie algebra admitting a 3-step complex structure. Then $\dim\n '\geq 3$. 
\end{corollary}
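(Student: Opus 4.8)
The plan is to argue by contraposition: assuming $\dim \n' \leq 2$, I would show that any complex structure $J$ on $\n$ must be $2$-step, contradicting the hypothesis that $J$ is $3$-step. Since $\n$ admits a complex structure it is even dimensional, so the degenerate case $\dim \n'=1$ is already covered by Corollary \ref{cor:heisenberg} (and will in any case be subsumed by the parity argument below); the substantive work lies in the case $\dim\n'=2$.

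First I would record a structural fact about $\n'_J = \n'\cap J\n'$, namely that it is $J$-invariant. Indeed, if $v\in \n'\cap J\n'$, then $v\in\n'$ gives $Jv\in J\n'$, while $v\in J\n'$ means $v=Jw$ with $w\in\n'$, so that $Jv=-w\in\n'$; hence $Jv\in\n'\cap J\n'$. Being $J$-invariant, $\n'_J$ is even dimensional. On the other hand, Corollary \ref{cor:3-pasos} asserts that for a $3$-step $J$ one has $\n'_J\neq 0$, so in fact $\dim \n'_J\geq 2$. Since $\n'_J\subset \n'$, this already forces $\dim\n'\geq 2$ and rules out $\dim\n'=1$.

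It remains to eliminate the possibility $\dim\n'=2$. In that case the inclusion $\n'_J\subset \n'$ together with $\dim\n'_J\geq 2=\dim\n'$ forces $\n'_J=\n'$, that is $\n'=\n'\cap J\n'\subset J\n'$; comparing dimensions gives $\n'=J\n'$, so $\n'$ is $J$-invariant. Consequently $J\n'=\n'\subset\z$, and by the equivalence (i)$\Leftrightarrow$(ii) of Lemma \ref{J-2-pasos} the complex structure $J$ is $2$-step, contradicting the assumption that $J$ is $3$-step. Therefore $\dim\n'\geq 3$.

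I expect the only delicate point to be the dimension bookkeeping in the case $\dim\n'=2$: the entire argument hinges on promoting the parity constraint on $\n'_J$ (coming from its $J$-invariance) together with the nonvanishing from Corollary \ref{cor:3-pasos} into the forced equality $\n'_J=\n'$, after which the $J$-invariance of $\n'$ and Lemma \ref{J-2-pasos} close the matter at once. No direct computation with the Nijenhuis tensor is required beyond what is already packaged into the cited lemmas.
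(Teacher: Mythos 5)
Your proof is correct, and it takes a genuinely different route from the paper's. The paper argues directly: since $J$ is $3$-step, Lemma \ref{J-2-pasos} provides $z\in\n'$ with $Jz\notin\z$, hence some $v$ with $[Jz,v]\neq 0$; the Nijenhuis condition $N_J(Jz,v)=0$ together with $z\in\n'\subset\z$ yields $[Jz,Jv]=J[Jz,v]$, and then $\{z,[Jz,v],[Jz,Jv]\}$ is shown to span a $3$-dimensional subspace of $\n'$ (the last two vectors span a $J$-invariant plane inside $\n'_J$, which cannot contain $z$ because $Jz\notin\z$). Your argument instead combines the even-dimensionality of $\n'_J$ (from its $J$-invariance, which you verify correctly) with the nonvanishing $\n'_J\neq 0$ from Corollary \ref{cor:3-pasos} to get $\dim\n'_J\geq 2$, and then observes that $\n'_J=\n'$ would make $\n'$ $J$-invariant and hence $J$ $2$-step by Lemma \ref{J-2-pasos}, forcing $\n'_J\subsetneq\n'$ and so $\dim\n'\geq 3$. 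Your route avoids any direct computation with $N_J$ but leans on Corollary \ref{cor:3-pasos}, which itself rests on the abelian-ideal machinery of Lemmas \ref{idealJz} and \ref{ideal}; the paper's computation is more self-contained and produces the $3$-dimensional subspace explicitly, which is slightly more structural information. There is no circularity in your use of Corollary \ref{cor:3-pasos}, as it precedes the statement in question and its proof does not depend on it.
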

\begin{proof}
    Assume that $J$ is a 3-step complex structure on $\n$, then there exists $z\in \n '$ such that $Jz\notin \z$ (Lemma \ref{J-2-pasos}). Hence, there exists $v\in\n$ such that $[Jz, v]\neq 0$ and since $N_J(Jz,v)=0$ we obtain that $J[Jz,v]=[Jz,Jv]$. In particular, $[Jz,v]$ and $[Jz,Jv]$ are linearly independent and $z\notin \text{span} \left\{ [Jz,v],  [Jz,Jv]\right\}$ since $Jz\notin \z$. Therefore, $\text{span} \left\{z,  [Jz,v],  [Jz,Jv]\right\}$ is a 3-dimensional subspace of $\n'$, and the corollary follows.
\end{proof}

As a consequence of the next result we will obtain  that there are families of 2-step nilpotent Lie algebras which can only admit 3-step complex structures (see Corollary~\ref{cor:free}~(ii) in \S\ref{sec-free}).
\begin{corollary}\label{cor:odd} Let $\n$ be a $2$-step nilpotent Lie algebra such that $\n '$ is odd dimensional and $\n'=\z$ . Then any complex structure on $\n$ is 3-step. 
\end{corollary}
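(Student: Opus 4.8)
The plan is to show that $J$ cannot be 2-step and then invoke Theorem~\ref{2or3} to conclude it is 3-step. First I would recall that, since $\n$ is already 2-step nilpotent, Lemma~\ref{J-2-pasos} gives that $J$ is 2-step if and only if $J\n'\subset\z$. Under the hypothesis $\n'=\z$, this condition becomes $J\z\subset\z$; because $J$ is invertible and $\dim J\z=\dim\z$, this is equivalent to $J\z=\z$, that is, to the center $\z$ being $J$-invariant.

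The crux is then a parity argument. If $\z$ were $J$-invariant, the restriction $J|_\z$ would satisfy $(J|_\z)^2=-I$ and hence endow $\z$ with a complex structure, forcing $\dim\z$ to be even. But by hypothesis $\dim\z=\dim\n'$ is odd, a contradiction. Therefore $\z$ is not $J$-invariant, so $J\n'=J\z\not\subset\z$, and $J$ fails condition (ii) of Lemma~\ref{J-2-pasos}; equivalently, one could phrase the same contradiction through Corollary~\ref{cor:3-step}, whose contrapositive states that $J\z\subset\z$ would force $J$ to be 2-step.

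Finally, Theorem~\ref{2or3} guarantees that every complex structure on a 2-step nilpotent Lie algebra is either 2-step or 3-step; since $J$ is not 2-step, it must be 3-step, as claimed. I do not expect a substantial obstacle here: the entire argument reduces to spotting that a $J$-invariant subspace must be even-dimensional, which immediately clashes with the odd-dimensionality of $\z=\n'$. The only point requiring minor care is the passage from the inclusion $J\z\subset\z$ to genuine $J$-invariance $J\z=\z$, which follows from equality of dimensions.
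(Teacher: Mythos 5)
Your proof is correct and rests on the same parity obstruction as the paper's: a $J$-invariant subspace must be even-dimensional, which clashes with $\dim\z=\dim\n'$ being odd, and the dichotomy of Theorem~\ref{2or3} then forces $J$ to be 3-step. The only cosmetic difference is that you argue by contradiction directly from Lemma~\ref{J-2-pasos} (passing from $J\z\subset\z$ to $J\z=\z$ by equality of dimensions), whereas the paper reaches $J\n'=J\z\not\subset\z$ by comparing the odd-dimensional ideal $J\z$ with the even-dimensional $\z\cap J\z$ and invoking the decomposition of Lemma~\ref{ideal}.
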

\begin{proof} Assume that $J$ is a complex structure on $\n$. The abelian ideals $J\z$ and $\z\cap J\z$ are   odd  and   even dimensional, respectively, therefore, $J\z=\z\cap J\z\oplus \u $ for some $\u\neq 0$ such that $\u\cap\z=0 $ (Lemma \ref{ideal}). It follows that $J\n'=J\z=\z\cap J\z\oplus \u $ with $\u\neq 0$ and $\u\cap\z=0 $, in particular, $J\n'\not\subset \z  $. Lemma \ref{J-2-pasos} implies that $J$ is 3-step. 
\end{proof}

We give next examples of both, 2-step and 3-step complex structures.
\subsection{Complex structures on free 2-step nilpotent Lie algebras}  \label{sec-free}  We denote by $\f_r$ the free $2$-step nilpotent Lie algebra of rank  $r$, that is, given a real $r$-dimensional vector space $V$, set $\f_r = V \oplus \Lambda^2\left( V\right)$ with center 
$\Lambda^2\left( V\right)$ and Lie bracket  $[v,w]= v\wedge w$, for $ v,w \in V$. Note that    $\f_2$ is the 3-dimensional Heisenberg Lie algebra $\h_3$. 

We show next that $\f_r$ always admits a complex structure when $\dim\f_r$ is even. On the other hand, if $\dim\f_r$ is odd, then $\R\oplus \f_r$ admits a complex structure.  
We point out that $\dim\f_r$ is even if and only if   $r\equiv 0$ or $3  \pmod 4$.   
\begin{proposition} \label{prop:free}
\begin{enumerate} \item[]
    \item [$\ri$] 
If $r\equiv 0$ or $3  \pmod 4$, then $\f_r$ admits a complex structure.
\item [$\rii$] 
If $r\equiv 1$ or $2 \; \pmod 4$, then $\R\oplus \f_r$ admits a complex structure.
\end{enumerate}

\end{proposition}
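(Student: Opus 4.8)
The plan is to produce an explicit integrable $J$ by choosing a complex structure on an even-dimensional part of $V$ and then distributing a complex structure over $\Lambda^2 V$ block by block, using the decomposition of $\Lambda^2 V$ induced by that choice. First note that $\dim \f_r = r + \binom r2 = r(r+1)/2$, which is even precisely when $r\equiv 0,3\pmod4$, as asserted. Split $V = V_0 \oplus \R e_0$, where $\dim V_0$ is even and the line $\R e_0$ is present exactly when $r$ is odd. Fix a complex structure $j$ on $V_0$ (so $\dim V_0 = 2m$), and consider the involution $\rho = j\wedge j$ on $\Lambda^2 V_0$, $\rho(v\wedge w) = jv\wedge jw$, together with the $j$-derivation $D(v\wedge w) = jv\wedge w + v\wedge jw$. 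Since $\rho^2 = \mathrm{id}$ we get $\Lambda^2 V_0 = \Lambda^+ \oplus \Lambda^-$, the $\pm1$-eigenspaces, with $\dim\Lambda^+ = m^2$ and $\dim\Lambda^- = m(m-1)$; moreover $D$ and $\rho$ commute, $D$ vanishes on $\Lambda^+$, and $D^2 = -4\,\mathrm{id}$ on $\Lambda^-$, so $\tfrac12 D$ restricts to a complex structure on $\Lambda^-$. Finally recall that for $\f_r$ one has $\z = \n' = \Lambda^2 V = \Lambda^2 V_0 \oplus (V_0\wedge e_0)$, and that $V_0\wedge e_0 \cong V_0$ carries the transported complex structure $j_0(v\wedge e_0) = jv\wedge e_0$.

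Next I would define $J$ block-wise: $J = j$ on $V_0$, $J = \tfrac12 D$ on $\Lambda^-$, and $J = j_0$ on $V_0\wedge e_0$. The remaining piece is $U = \R e_0 \oplus \Lambda^+$ (with the $\R e_0$ summand absent when $r$ is even), on which I would take $J|_U$ to be an arbitrary complex structure. This requires $\dim U$ to be even, and here the four congruence classes appear: writing $2m=\dim V_0$ one finds $\dim U = m^2$ (if $r$ even) or $1+m^2$ (if $r$ odd), which is even exactly for $r\equiv 0,3\pmod4$. When $r\equiv 1,2\pmod4$ the dimension $\dim U$ is odd, and one repairs parity by adjoining a single central generator $t$, i.e. by passing to $\R\oplus\f_r$ and declaring $U' = U\oplus\R t$; this recovers case $\rii$. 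In all cases $J$ is a genuine endomorphism with $J^2=-I$.

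The core of the argument is then verifying integrability, which I would do by checking $N_J=0$ on all pairs of the basis types $V_0$, $e_0$, and the central summands (bilinearity and antisymmetry of $N_J$ reduce to these). For $v,w\in V_0$ the point is that $N_J(v,w) = v\wedge w + J\bigl(D(v\wedge w)\bigr) - \rho(v\wedge w)$, and since $D(v\wedge w)\in\Lambda^-$ always, only $J|_{\Lambda^-}=\tfrac12 D$ enters; a direct computation on the $\Lambda^+$ and $\Lambda^-$ components then gives $0$, so the free choice of $J|_U$ is irrelevant here. For pairs involving $e_0$ the decisive cancellation is $J(jv\wedge e_0) = j_0(jv\wedge e_0) = -\,v\wedge e_0$, which kills the surviving bracket term in $N_J(v,e_0)$; the same identity also disposes of $N_J(v,\eta)$ when $\eta\in\Lambda^+$ and $J\eta$ happens to have an $e_0$-component, because $[v,J\eta]$ is then proportional to $v\wedge e_0$. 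All remaining pairs involve only central elements (including the adjoined $t$) and vanish because $[e_0,e_0]=0$ and $\Lambda^2V_0$, $V_0\wedge e_0$, $\R t$ are central. Equivalently, one may package this as the assertion that the $(1,0)$-space $\f^{1,0}$ is a complex subalgebra; for $\f_3$ this is transparent, with $\f^{1,0} = \la e_1+ie_2,\ e_3+ie_{12},\ e_{13}+ie_{23}\ra$.

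The main obstacle is exactly the odd generator $e_0$ in the odd-rank cases $r\equiv 1,3\pmod4$: because $[e_0,V_0]=V_0\wedge e_0\neq0$, one cannot simply place $e_0$ in the center (that naive attempt fails, as $N_J(e_i,e_j)=e_i\wedge e_j\neq0$), so $e_0$ must be combined with the eigenspace $\Lambda^+$ on which $J$ is free, and integrability must be rescued by the compatibility $J|_{V_0\wedge e_0}=j_0$. I would close by remarking that in these odd-rank cases $\z=\n'$ is forced to have a part on which $J\z\not\subset\z$, so the resulting $J$ is $3$-step, consistent with Lemma~\ref{J-2-pasos} and Corollary~\ref{cor:odd}, whereas for $r\equiv0\pmod4$ the construction keeps $\z$ invariant and yields a $2$-step $J$.
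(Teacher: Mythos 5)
Your construction is correct, and it is in fact the same complex structure the paper writes down, repackaged invariantly: in the paper's basis for $r\equiv 0\pmod 4$, one checks that $\alpha^{\pm}_{ij},\beta^{\pm}_{ij},\gamma_i$ are exactly eigenvectors of your involution $\rho=j\wedge j$ (with $\Lambda^-=\operatorname{span}\{\alpha^+_{ij},\beta^-_{ij}\}$ and $\Lambda^+=\operatorname{span}\{\alpha^-_{ij},\beta^+_{ij},\gamma_i\}$), that the paper's assignment $J\alpha^+_{ij}=\beta^-_{ij}$ is precisely $\tfrac12D$ on $\Lambda^-$, that $J\epsilon_i=\delta_i$ is your transported $j_0$ on $V_0\wedge e_0$, and that the remaining assignments ($J\alpha^-=\beta^+$, $J\gamma_{2i-1}=\gamma_{2i}$, $Jv_0=\gamma_{2k-1}$, $J\gamma_{2k+1}=w_0$) are one admissible choice of the ``arbitrary'' complex structure on your block $U$. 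What your version adds is twofold. First, the eigenspace bookkeeping ($\dim\Lambda^+=m^2$, $\dim U=m^2$ or $1+m^2$) makes the appearance of the four congruence classes mod $4$ transparent rather than an artifact of basis counting. Second, and more importantly, the paper merely asserts ``it turns out that $J$ is a complex structure,'' whereas you actually verify $N_J=0$; your case analysis is complete and the two cancellations you isolate — $J(D(v\wedge w))=\tfrac12D^2\eta^-=-2\eta^-$ against $\rho(v\wedge w)=\eta^+-\eta^-$, and $J(jv\wedge e_0)=-v\wedge e_0$ against the surviving bracket terms — are exactly the right ones, including the case where $J\eta$ for central $\eta$ has an $e_0$-component. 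Your closing remark on the step of $J$ (2-step for $r\equiv0$, 3-step for $r\equiv3$ since $Je_0$-type elements force $J\z\not\subset\z$) also matches the paper's subsequent Remark and Corollary~\ref{cor:free}.
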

\begin{proof}
    (i) Assume that $r\equiv 0$ or $3  \pmod 4$.  
 We study each case separately.

\begin{itemize}
    \item $r\equiv 0 \pmod 4$. Given a basis  $v_1, \ldots, v_{2k}, w_1, \ldots , w_{2k}$  of $V$, consider the following basis of $\Lambda^2\left( V\right)$:
    \begin{eqnarray*}
        \alpha_{ij}^\pm &:=&  w_i\wedge v_j\pm v_i\wedge w_j ,\quad \text{ for } i<j,\\
        \beta_{ij}^\pm &:=& w_i\wedge w_j\pm v_i\wedge v_j ,\quad \text{ for } i<j, \\
        \gamma_i&:=& v_i\wedge w_i, \qquad     i=1, \ldots , 2k.
    \end{eqnarray*}
We define a complex endomorphism $J$ of $\f_r$ as follows:
\begin{equation}
 Jv_i=w_i, \qquad    J\alpha_{ij}^+=\beta_{ij}^-, \qquad J\alpha_{ij}^-=\beta_{ij}^+, \qquad J\gamma_{2i-1}= \gamma_{2i}. 
\end{equation}
It turns out that $J$ is a 2-step complex structure on $\f_r$, that is, $N_J\equiv 0$ and $J[\f_r ,\f_r]\subset \z$.  
    \item $r\equiv 3 \pmod 4$. Given a basis  $v_0, v_1, \ldots, v_{2k-1}, w_1, \ldots , w_{2k-1}$  of $V$, consider the following basis of $\Lambda^2\left( V\right)$:
    \begin{eqnarray*}
        \alpha_{ij}^\pm &:=&  w_i\wedge v_j\pm v_i\wedge w_j ,\quad \text{ for } 1\leq i<j\leq 2k-1,\\
        \beta_{ij}^\pm &:=& w_i\wedge w_j\pm v_i\wedge v_j ,\quad \text{ for } 1\leq i<j\leq 2k-1, \\
        \gamma_i&:=& v_i\wedge w_i, \qquad     i=1, \ldots , 2k-1,\\
        \epsilon_i&:=& v_0\wedge v_i, \qquad     i=1, \ldots , 2k-1,\\
        \delta_i&:=& v_0\wedge w_i, \qquad     i=1, \ldots , 2k-1.
    \end{eqnarray*} 
We define a complex endomorphism $J$ of $\f_r$ as follows:
\begin{equation}
 \begin{split}     
 \!\!\!Jv_0= \gamma_{2k-1}, \quad\qquad Jv_i=w_i, \quad i\geq 1, \qquad  \quad  J\alpha_{ij}^+=\beta_{ij}^-,\\ \qquad J\alpha_{ij}^-=\beta_{ij}^+, \qquad J \epsilon_i = \delta_i, \qquad J\gamma_{2i-1}= \gamma_{2i}, \quad i=1, \ldots, k-1. \end{split}
\end{equation}
It turns out that $J$ is a 3-step complex structure on $\f_r$, since $J[\f_r ,\f_r]$ is not contained in the center of $\f_r$.
\end{itemize}

(ii) If $\dim\f_r$ is odd, the proof is analogous to (i) by considering  separately the cases $r\equiv 1 \pmod 4$ and $r\equiv 2 \pmod 4$.  
\begin{itemize}
    \item $r\equiv 1 \pmod 4$. Given a basis  $v_0, v_1, \ldots, v_{2k}, w_1, \ldots , w_{2k}$  of $V$, consider the following basis of $\Lambda^2\left( V\right)$:
    \begin{eqnarray*}
        \alpha_{ij}^\pm &:=&  w_i\wedge v_j\pm v_i\wedge w_j ,\quad \text{ for } i<j,\\
        \beta_{ij}^\pm &:=& w_i\wedge w_j\pm v_i\wedge v_j ,\quad \text{ for } i<j, \\
        \gamma_i&:=& v_i\wedge w_i, \qquad     i=1, \ldots , 2k,\\
    \epsilon_i&:=& v_0\wedge v_i, \qquad     i=1, \ldots , 2k,\\
        \delta_i&:=& v_0\wedge w_i, \qquad     i=1, \ldots , 2k.
    \end{eqnarray*}
We define a complex endomorphism $J$ of $\R w_0\oplus\f_r$ as follows:
\begin{equation}
 Jv_i=w_i, \qquad    J\alpha_{ij}^+=\beta_{ij}^-, \qquad J\alpha_{ij}^-=\beta_{ij}^+, \qquad J\gamma_{2i-1}= \gamma_{2i}, \quad
 J \epsilon_i = \delta_i.
\end{equation}
It turns out that $J$ is a 2-step complex structure on  $\R w_0\oplus\f_r$. 

\smallskip

    \item $r\equiv 2 \pmod 4$. Given a basis  $ v_1, \ldots, v_{2k+1}, w_1, \ldots , w_{2k+1}$  of $V$, consider the following basis of $\Lambda^2\left( V\right)$:
    \begin{eqnarray*}
        \alpha_{ij}^\pm &:=&  w_i\wedge v_j\pm v_i\wedge w_j ,\quad \text{ for }  i<j,\\
        \beta_{ij}^\pm &:=& w_i\wedge w_j\pm v_i\wedge v_j ,\quad \text{ for }  i<j, \\
        \gamma_i&:=& v_i\wedge w_i, \qquad     i=1, \ldots , 2k+1.
    \end{eqnarray*} 
We define a complex endomorphism $J$ of $\R w_0\oplus\f_r$ as follows:
\begin{equation}
 Jv_i=w_i, \qquad    J\alpha_{ij}^+=\beta_{ij}^-, \qquad J\alpha_{ij}^-=\beta_{ij}^+, \qquad J\gamma_{2i-1}= \gamma_{2i}, \quad J \gamma_{2k+1}=w_0.
\end{equation}
It turns out that $J$ is a 2-step complex structure on $\R w_0\oplus\f_r$. This concludes the proof of (ii), and the proposition follows.
\end{itemize}

\end{proof}

\begin{remark}
    In contrast with  Proposition \ref{prop:free}, it follows from \cite[Theorem 3]{PT} that  $\R^d\oplus \f_r$ admits a symplectic structure if and only if $d=0$ and $r=3$ or $d=1$  and $r=2$. 
\end{remark}

\begin{remark}    
    If  $r\equiv 0 \pmod 4$, the complex structure on $\ \f_r$  exhibited in Proposition~\ref{prop:free} is 2-step.  When $r\equiv 3 \pmod 4$,   the corresponding complex structure  is 3-step. Moreover, we will show in Corollary \ref{cor:free} below that if $r\equiv 0 \pmod 4$ (resp., $r\equiv 3 \pmod 4$), then  any complex structure on $ \f_r$ is 2-step (resp., 3-step).
\end{remark}

Corollary \ref{cor:free} will follow from the fact that, 
 if $\a$ is an abelian ideal of $\R^d\oplus\f_r$ which is not contained in the center,  then $\a\cap \z$ has codimension $1$ in $\a$.
\begin{lemma}\label{lem:ideal-free} Let $\f_r$ be the free $2$-step nilpotent Lie algebra of rank  $r$ and let $d$ be a nonnegative integer. 
\begin{enumerate}
\item[$\ri$] If $\u$ is an abelian subalgebra of  $\R^d\oplus\f_r$ such that $\u\cap \z =0 $, then $\u=0$ or $\dim \u=1$. 
\item[$\rii$] If $\a$ is an abelian ideal of  $\R^d\oplus\f_r$ such that $\a\not \subset \z$, then $\a\cap \z$ has codimension $1$ in~$\a$.
\end{enumerate}

\end{lemma}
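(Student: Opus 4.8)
The plan is to exploit the very simple bracket structure of $\g := \R^d\oplus\f_r = \R^d\oplus V\oplus\Lambda^2(V)$, whose center is $\z=\R^d\oplus\Lambda^2(V)$ and whose commutator ideal is $\g'=\Lambda^2(V)$. The crucial observation is that, writing $\pi\colon\g\to V$ for the projection onto the $V$-summand, the bracket factors through $\pi$: for $x,y\in\g$ one has $[x,y]=\pi(x)\wedge\pi(y)$, since all the $\R^d$- and $\Lambda^2(V)$-components are central. Note also that $\ker\pi=\z$. Thus the whole problem reduces to the behaviour of the wedge product on subspaces of $V$.

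First I would prove $\ri$. Suppose $\u$ is an abelian subalgebra with $\u\cap\z=0$. Since $\ker\pi=\z$, the restriction $\pi|_\u$ is injective, so $\dim\u=\dim\pi(\u)$. The hypothesis that $\u$ is abelian translates, via the factorization above, into $\pi(u_1)\wedge\pi(u_2)=0$ for all $u_1,u_2\in\u$; that is, $\pi(\u)$ is a subspace of $V$ on which the wedge product vanishes identically. The key elementary fact is that in $\Lambda^2(V)$ one has $v_1\wedge v_2=0$ if and only if $v_1,v_2$ are linearly dependent, so a subspace $W\subseteq V$ with $v_1\wedge v_2=0$ for all $v_1,v_2\in W$ must satisfy $\dim W\leq 1$. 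Applying this to $W=\pi(\u)$ gives $\dim\u=\dim\pi(\u)\leq 1$, which is $\ri$.

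Next I would deduce $\rii$ from $\ri$. Given an abelian ideal $\a$ with $\a\not\subset\z$, choose any complementary subspace $\u$ of $\a\cap\z$ in $\a$, so that $\a=(\a\cap\z)\oplus\u$ and the codimension of $\a\cap\z$ in $\a$ equals $\dim\u$. Then $\u$ is abelian (being contained in the abelian $\a$) and satisfies $\u\cap\z=\u\cap(\a\cap\z)=0$, so $\ri$ gives $\dim\u\leq 1$. On the other hand $\a\not\subset\z$ forces $\a\neq\a\cap\z$, hence $\u\neq 0$ and $\dim\u\geq 1$. Combining, $\dim\u=1$, which is exactly the assertion that $\a\cap\z$ has codimension $1$ in $\a$.

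I do not expect a serious obstacle: once the bracket is recognized to factor through $\pi$, both parts are immediate. The only point requiring care is the elementary exterior-algebra fact that the wedge product vanishes on a subspace only if that subspace is at most one-dimensional; everything else is bookkeeping with complements and the identity $\ker\pi=\z$. It is worth noting that the ideal hypothesis in $\rii$ is not actually used in this argument — abelianness together with $\a\not\subset\z$ suffices — but stating it costs nothing and matches the intended application.
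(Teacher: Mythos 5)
Your proof is correct and follows essentially the same route as the paper: for (i) the paper also decomposes elements into their central and $V$-components (your projection $\pi$) and uses that two linearly independent vectors of $V$ have nonzero wedge, and for (ii) it likewise reduces to (i) via a complement $\u$ of $\a\cap\z$ in $\a$ (the paper invokes its Lemma~\ref{ideal}, but the needed facts are exactly the ones you verify directly). Your closing observation that the ideal hypothesis in (ii) is not needed for this particular conclusion is accurate.
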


\begin{proof} To prove $\ri$, assume that, on the contrary,  $\dim \u >1$. Then there exist linearly independent $u, v\in \u$. Decompose $u=u_1+u_2, \,
v=v_1+v_2$ with $u_1, v_1\in \z$ and $u_2, v_2\in V,$ hence $u_2, v_2$ are linearly independent, since $\u\cap \z =0 $. Therefore, $u_2\wedge v_2 \neq 0$ and 
$[u,v]=[u_2, v_2]=u_2\wedge v_2 \neq 0$, a contradiction, since $\u$ is abelian.

Let $\a$ be an abelian ideal of $\R^d\oplus\f_r$ such that $\a\not \subset \z$, then  
 Lemma \ref{ideal} applies, so   there exists an abelian subalgebra $\u\neq 0$ such that $\a= \a\cap \z \oplus\u$ with $\u\cap \z =0$. Therefore,   (i) implies that $\dim \u =1$ and (ii) follows.
\end{proof}

\begin{corollary}\label{cor:free} Let $\f_r$ be the free $2$-step nilpotent Lie algebra of rank  $r$.
\begin{enumerate} 
\item[$\ri$]  If $r\equiv 0 \pmod 4$ then any complex structure on $\f_r$ is 2-step.
\item[$\rii$]  If $r\equiv 3 \pmod 4$ then  any complex structure $J$ on $\f_r$ is 3-step. Moreover, $\z\cap J\z$ has codimension $1$ in $\z$.
\end{enumerate}
\end{corollary}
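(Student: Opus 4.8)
The plan is to prove Corollary~\ref{cor:free} by combining the dimension count for $\f_r$ with the structural information about abelian ideals collected in Lemma~\ref{lem:ideal-free}, together with the characterizations of $2$-step versus $3$-step complex structures from Lemma~\ref{J-2-pasos} and Corollary~\ref{cor:odd}. The essential numerical input is that $\dim \z = \dim \Lambda^2(V) = \binom{r}{2}$ and that $\f_r$ is $2$-step nilpotent with $\n' = \z = \Lambda^2(V)$, so the parity of $\dim \z$ is governed entirely by $r \bmod 4$: one checks that $\binom{r}{2}$ is even when $r \equiv 0 \pmod 4$ and odd when $r \equiv 3 \pmod 4$.

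For part~$\ri$, when $r \equiv 0 \pmod 4$ the center $\z = \n'$ has even dimension. First I would invoke Corollary~\ref{non-strongly} to guarantee that $\z \cap J\z \neq 0$ for any complex structure $J$. The goal is to show $J\z \subset \z$, i.e.\ $\z$ is $J$-invariant, which by Lemma~\ref{J-2-pasos}~(ii) (using $\n' = \z$) forces $J$ to be $2$-step. Here the key mechanism is Lemma~\ref{lem:ideal-free}~(ii): $J\z$ is an abelian ideal (Lemma~\ref{idealJz}), so if it were \emph{not} contained in $\z$, then $(J\z) \cap \z$ would have codimension exactly $1$ in $J\z$. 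Since $\dim J\z = \dim \z$ is even, this would make $\dim\bigl((J\z)\cap\z\bigr)$ odd; but $(J\z) \cap \z = J\z \cap \z$ coincides with $\z \cap J\z = \a_1(J)$, which is a complex subspace and hence even dimensional — a contradiction. Therefore $J\z \subset \z$ and $J$ is $2$-step.

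For part~$\rii$, when $r \equiv 3 \pmod 4$ the center $\z = \n'$ is odd dimensional, so Corollary~\ref{cor:odd} (with $\n' = \z$ odd dimensional) applies directly and yields that every complex structure on $\f_r$ is $3$-step. For the final assertion about codimension, I would again use that $J\z$ is an abelian ideal not contained in $\z$ (by Lemma~\ref{J-2-pasos}, since $J$ is $3$-step), so Lemma~\ref{lem:ideal-free}~(ii) gives that $\z \cap J\z$ has codimension $1$ in $J\z$; since $\dim J\z = \dim \z$, this is the same as saying $\z\cap J\z$ has codimension $1$ in $\z$.

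I expect the only real subtlety to be the bookkeeping that makes part~$\ri$ work: the clean parity contradiction relies on the identity $\dim J\z = \dim \z$ (so that the codimension-$1$ statement from Lemma~\ref{lem:ideal-free}~(ii) transfers between $\z$ and $J\z$) and on the fact that $\z \cap J\z$ is $J$-invariant and therefore even dimensional. Once these two observations are in place, both parts reduce to parity arithmetic on $\binom{r}{2}$, so the main obstacle is conceptual packaging rather than computation. I would state the parity of $\binom{r}{2}$ explicitly at the outset so that the two cases dovetail with the already-proven Corollaries~\ref{non-strongly} and~\ref{cor:odd}.
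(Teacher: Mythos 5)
Your proposal is correct and follows essentially the same route as the paper: part \ri\ is the parity contradiction obtained by combining Lemma~\ref{lem:ideal-free}~(ii) with the facts that $J\z$ is an abelian ideal (Lemma~\ref{idealJz}) and that $\z\cap J\z$ is $J$-invariant, hence even dimensional; part \rii\ is Corollary~\ref{cor:odd} plus Lemma~\ref{lem:ideal-free}~(ii) applied to $J\z\not\subset\z$. The only cosmetic differences are that you make the parity of $\binom{r}{2}$ and the codimension transfer between $J\z$ and $\z$ explicit, and you invoke Corollary~\ref{non-strongly}, which is harmless but not needed.
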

\begin{proof} The Lie algebra $\f_r$ satisfies $\f_r'=\z = \Lambda^2\left( V\right) $. 

If $r\equiv 0 \pmod{4}$ then $\dim\z$ is even, therefore both, $J\z$ and  $\z\cap J\z$ are even dimensional. Lemma \ref{lem:ideal-free} implies that the abelian ideal $J\z$ must satisfy $J\z\subset \z$, that is, $J\f_r' \subset \z$, in other words, $J$ is 2-step  (Lemma \ref{J-2-pasos}) and (i) follows.

If $r\equiv 3 \pmod{4}$ then $r=2s+1$ with $s\equiv 1 \pmod{2}$,   therefore, 
$\dim\f_3'=\dim\z = (2s+1)s \equiv 1 \pmod{2}$  and  Corollary \ref{cor:odd} implies that any complex structure on $\f_r$ is 3-step. Given a complex structure $J$ on $\f_r$, the abelian ideal $J\z$ must satisfy $J\z\not \subset\z$ (Corollary \ref{cor:3-step}). Lemma \ref{lem:ideal-free} implies that 
$J\z= \z\cap J\z \oplus\u$ with $\u\cap \z =0 $ and  $\dim \u=1$, hence, $\z= \z\cap J\z \oplus J\u$ and the last assertion in (ii)   follows. 
\end{proof}

\subsection{Six dimensional 2-step nilpotent Lie algebras} We end this section by studying the 6-dimensional case.  
The list of six dimensional real $2$-step nilpotent Lie algebras is given  in Table \ref{tabla} (see \cite{Mag,Mor}). 
We recall Salamon's notation for an $m$-dimensional  Lie algebra $\n$. Let $e^1, \dots , e^m$ be a basis of $\n ^*$, then $\n$ is represented   by an $m$-tuple $(de^1, \dots , d e^m)$, and if 
$d e^i=\sum_{j<k}c^i_{jk} e^j\wedge e^k$, then $e^j\wedge e^k$ is abbreviated as $jk$. For instance, the 6-tuple $(0,0,0,13-24,14+23)$    denotes the 6-dimensional  Lie algebra defined by: 
\[ d e^5= e^{1}\wedge e^3-e^{2}\wedge e^4, \qquad\quad d e^6= e^{1}\wedge e^4+e^{2}\wedge e^3.
\]
As a consequence of the results  in \cite{Sal} it follows that all Lie algebras in Table \ref{tabla} admit complex structures. 
Any complex structure on the Lie algebra $\f_3$ is 3-step  (Corollary \ref{cor:free}), while  the remaining 6-dimensional 2-step nilpotent Lie algebras only admit 2-step complex structures.

\begin{table}[ht] 
\begin{tabular}{lccc} \hline
 $\qquad\qquad \n$ & $\dim\n '$ & $\dim \z$ 
 \\ \hline
 $(0,0,0,12,13,23)$ & $3$ &  $3$ & $\f_3$ \\
 $(0,0,0,0,13-24,14+23)$ & $2$ & $2$ & $\h_3(\C )$\\
$(0,0,0,0,12,14+23)$ & $2$ & $2$ & $\h_3\ltimes\R^3$  \\
$(0,0,0,0,12,34)$ & $2$ & $2$ & $\h_3 \oplus \h_3 $ \\
$(0,0,0,0,12,13)$ & $2$ & $3$ & $ \R\ltimes \R^5$ \\
$(0,0,0,0,0,12+34)$ & $1$ & $2$ & $\R \oplus \h_5 $  \\
$(0,0,0,0,0,12)$ & $1$ & $4$  & $\R^3 \oplus \h_3 $ \\
\hline
\vspace*{.1cm}
\end{tabular} 
\vspace*{.1cm}
\caption{\label{tabla} Six dimensional real $2$-step nilpotent Lie algebras.}
\end{table}

\begin{proposition}
    Any 6-dimensional 2-step nilpotent Lie algebra $\n$ admits a complex structure. Moreover, $\n$ admits  a 3-step complex structure  if and only if $\n$ is isomorphic to $\f_3$, and any complex structure on $\f_3$ is 3-step.
\end{proposition}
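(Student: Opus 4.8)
The plan is to reduce the statement to results already established for free $2$-step nilpotent Lie algebras, combined with a direct inspection of Table~\ref{tabla}. The existence of a complex structure on each of the seven algebras listed is exactly the content of Salamon's classification in \cite{Sal}, as recorded just above the table; so for the first assertion I would simply invoke that every entry of Table~\ref{tabla} carries a complex structure. This settles that any $6$-dimensional $2$-step nilpotent Lie algebra admits a complex structure.

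For the characterization of the $3$-step case I would argue as follows. Suppose $\n$ is one of the algebras in Table~\ref{tabla} and admits a $3$-step complex structure $J$. By Corollary~\ref{cor:J-3-pasos}, the existence of such a $J$ forces $\dim \n' \geq 3$. Scanning the column $\dim \n'$ in Table~\ref{tabla}, the only algebra meeting this requirement is $\f_3 = (0,0,0,12,13,23)$, for which $\dim \n' = 3$; every other entry satisfies $\dim \n' \leq 2$. This proves the ``only if'' direction: a $3$-step complex structure can occur only on $\f_3$.

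For the converse and for the final clause, I would observe that $\f_3$ is the free $2$-step nilpotent Lie algebra of rank $r=3$ and that $3 \equiv 3 \pmod 4$. Hence Corollary~\ref{cor:free}~(ii) applies and shows that \emph{every} complex structure on $\f_3$ is $3$-step. Together with the existence statement from the first paragraph, this simultaneously yields that $\f_3$ admits a $3$-step complex structure and that all of its complex structures are of this type, completing the equivalence. (Alternatively, the explicit $3$-step $J$ on $\f_r$ for $r\equiv 3 \pmod 4$ constructed in Proposition~\ref{prop:free}~(i) already exhibits one on $\f_3$.)

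The argument is essentially a bookkeeping assembly of the corollaries proved earlier in the section, so I do not expect a serious obstacle once Table~\ref{tabla} is available. The only point demanding care is verifying that the bound $\dim \n' \geq 3$ of Corollary~\ref{cor:J-3-pasos} is sharp enough to single out $\f_3$ uniquely among the six-dimensional algebras; the table makes this transparent, since $\f_3$ is the unique entry with three-dimensional commutator ideal.
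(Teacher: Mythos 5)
Your proposal is correct and follows essentially the same route as the paper's own proof: existence from Salamon's classification, the bound $\dim\n'\geq 3$ from Corollary~\ref{cor:J-3-pasos} together with Table~\ref{tabla} to isolate $\f_3$, and Corollary~\ref{cor:free} for the fact that every complex structure on $\f_3$ is $3$-step. No issues.
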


\begin{proof}
    The first assertion follows from \cite{Sal}.  If $\n$ 
    admits a 3-step complex structure, then $\dim\n' \geq 3$ (Corollary \ref{cor:J-3-pasos}), therefore,  $\n$ is isomorphic to $\f_3$ (see Table \ref{tabla}). Finally, Corollary \ref{cor:free} implies that any complex structure on $\f_3$ is 3-step.
\end{proof}

\smallskip

Our aim is to give a characterization of the $2$-step nilpotent Lie algebras $\n$ admitting a complex structure, so let $J$ be a complex structure on  $\n$. According to Theorem \ref{2or3}, $J$ is $t$-step for $t=2$ or $3$.  
We study each case separately and we will show that in the latter case the complex structure can be constructed by combining suitably an abelian complex structure with a 2-step complex structure.

\section{Two-step complex structures} 

Let $\n$ be a 2-step nilpotent Lie algebra with an inner product $\pint$ and fix an ideal $\z _0$ of $\n $ such that $\n '\subset \z _0 \subset \z$. Let $\v$ be the orthogonal complement of $\z_0$ in $\n$.   
 Since $\n'\subset \z_0$, the Lie bracket on $\n$ amounts to a linear map $j: \z_0 \rightarrow \operatorname{End}\, (\v)$, 
$ z \mapsto j(z)$, where 
 $j(z)$, $z\in \z_0$, is defined as follows:
\begin{equation} \label{jz}
\langle j(z)v,w \rangle =
\langle z, [v,w]\, \rangle , \;\;\; \;\forall v,w \in \v.
\end{equation} 
Observe that $j(z),\; z\in \z_0$,  are skew-symmetric so that 
$j$ defines 
 a linear map $j: \z_0 \rightarrow \s \o (\v, \pint)$. Note that Ker$(j)$ is the orthogonal complement of 
 $\n'$ in $\z_0$.   The center $\z$ of $\n$ is given by $\z=\z_0\oplus \bigcap _{z\in \z_0}\, \text{Ker}\, j(z)$.

Let $J$ be a 2-step complex structure  on  $\n$.  Assume that $(\n, J)$ has no  complex abelian factor and let  
 $\z_0=\n '+J\n'\subset \z$  (see  Lemma \ref{lem:non-trivial-abelian}). 
  We fix an auxiliary Hermitian inner product $\pint$ on $(\n ,J)$. Note that 
$\n'$ is $J$-invariant  if and only if $j: \z_0 \rightarrow \s \o (\v)$ defined in \eqref{jz} is injective. 
We define a linear map $S:\z_0 \to \operatorname{End}(\v)$ as follows:
\begin{equation}\label{S_z} S(z)= j(Jz) - J_\v \circ j(z), \quad z\in \z_0 .
\end{equation}
We will show in Proposition \ref{main1} below  that the integrability of $J$ is equivalent to the fact that the image of $S$ lies in $\mathfrak{gl}(n,\C)$, where $\dim\v =2n$.

 The proof of the next lemma   is straightforward. 
\begin{lemma}\label{lem_S} The linear map $S:\z_0 \to \operatorname{End}(\v)$ defined in \eqref{S_z} satisfies: 
\[ S({Jz})=-J_\v \circ S(z) \quad \text{ for all } z\in \z_0 .\]
In particular, $\ker S$ is $J$-invariant.
\end{lemma}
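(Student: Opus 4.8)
The plan is to verify the stated identity by a direct computation, the only preliminary points being to record why the expressions involved are well defined and why $J_\v$ is an honest complex structure. Since $J$ is $2$-step and $\z_0=\n'+J\n'$, we have $J\z_0=J\n'+J^2\n'=J\n'+\n'=\z_0$, using $J^2=-I$; thus $\z_0$ is $J$-invariant and in particular $Jz\in\z_0$ for every $z\in\z_0$, so that $S(Jz)$ makes sense. Because the auxiliary inner product is Hermitian (compatible with $J$) and $\z_0$ is $J$-invariant, its orthogonal complement $\v$ is $J$-invariant as well; hence $J_\v:=J|_\v$ satisfies $J_\v^2=-I_\v$.

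With these observations in place, I would compute both sides of the claimed equality. Expanding the left-hand side and using the linearity of $j$ together with $J(Jz)=J^2z=-z$ gives $S(Jz)=j(J^2 z)-J_\v\circ j(Jz)=-j(z)-J_\v\circ j(Jz)$. Expanding the right-hand side and using $J_\v^2=-I_\v$ gives $-J_\v\circ S(z)=-J_\v\circ j(Jz)+J_\v^2\circ j(z)=-J_\v\circ j(Jz)-j(z)$. The two expressions coincide, which proves $S(Jz)=-J_\v\circ S(z)$.

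Finally, the $J$-invariance of $\ker S$ is immediate from the identity: if $z\in\ker S\subset\z_0$, then $Jz\in\z_0$ by the $J$-invariance of $\z_0$, and $S(Jz)=-J_\v\circ S(z)=0$, so $Jz\in\ker S$. There is essentially no obstacle here beyond bookkeeping; the only point that requires a moment's care is the justification that $\v$ is $J$-invariant, so that $J_\v^2=-I_\v$ and the cancellation in the computation goes through, which rests on the compatibility of the chosen Hermitian metric with $J$.
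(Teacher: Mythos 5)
Your computation is correct and is exactly the direct verification the paper has in mind when it declares the proof of this lemma straightforward: expanding $S(Jz)=j(J^2z)-J_\v\circ j(Jz)=-j(z)-J_\v\circ j(Jz)$ and $-J_\v\circ S(z)=-J_\v\circ j(Jz)-j(z)$ gives the identity, and the $J$-invariance of $\ker S$ follows at once. Your preliminary remarks that $\z_0=\n'+J\n'$ is $J$-invariant (so $S(Jz)$ is defined) and that $\v=\z_0^\perp$ is $J$-invariant for the Hermitian metric (so $J_\v^2=-I_\v$) are the right points to record and are consistent with the paper's setup.
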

The next proposition gives a necessary and sufficient condition, in terms of the map $S$ defined in \eqref{S_z}, for the integrability of a complex structure $J$  satisfying $J\n '\subset \z$.

\begin{proposition}\label{main1} Let $\n$ be a $2$-step nilpotent Lie algebra with an almost complex structure~$J$ such that $J\n '\subset \z$ and let $\z_0:=\n '+J\n$. 
Fix an arbitrary Hermitian inner product $\pint$ on $\n$,  decompose $\n$ orthogonally as $\n=\z_0\oplus \v$ and let $S:\z_0 \to \operatorname{End}(\v)$ be the linear map defined    in \eqref{S_z}. Then $J$ is integrable if and only if $S(z)$ commutes with  $J_\v$ for all $z\in \z_0$, in other words, 
\begin{equation}\label{integ2}\operatorname{Image}(S)\subset \mathfrak{gl}(n,\C)=\{ T\in \operatorname{End}(\v) : T  \circ J_\v = J_\v \circ T\},  \end{equation} 
where $\dim\v =2n$.  
\end{proposition}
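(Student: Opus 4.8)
The plan is to reduce the Nijenhuis condition to a single operator identity on $\v$ and then recognize that identity as the vanishing of a commutator with $J_\v$.

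First I would record the two structural facts that make the reduction possible. Since $\n$ is $2$-step and $J\n'\subset\z$, the subspace $\z_0=\n'+J\n'$ is central and $J$-invariant (indeed $J\z_0=J\n'+\n'=\z_0$); because the inner product is Hermitian, its orthogonal complement $\v$ is likewise $J$-invariant, so $J_\v:=J|_\v$ is well defined and $j(z)\in\s\o(\v)$ for every $z\in\z_0$. With these in hand, I claim $N_J(x,y)=0$ is automatic whenever at least one of $x,y$ lies in $\z_0$: if $x\in\z_0\subset\z$ then $Jx\in\z_0\subset\z$ as well, so all four brackets in \eqref{nijen} vanish. Hence integrability is equivalent to $N_J(x,y)=0$ for all $x,y\in\v$.

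For $x,y\in\v$ every bracket appearing in $N_J(x,y)$ lands in $\n'\subset\z_0$, and $J\z_0=\z_0$, so $N_J(x,y)\in\z_0$; thus it vanishes iff $\langle N_J(x,y),z\rangle=0$ for all $z\in\z_0$. I would now expand this pairing using the defining relation $\langle [v,w],z\rangle=\langle j(z)v,w\rangle$ together with the skew-symmetry of $J$ (so $\langle Ja,b\rangle=-\langle a,Jb\rangle$) and of each $j(z)$. Moving every $J$ off the bracket and onto $z$ (legitimate since $Jz\in\z_0$), each of the four terms of \eqref{nijen} becomes $\langle(\,\cdot\,)x,y\rangle$ for an explicit operator on $\v$. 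Collecting them gives $\langle N_J(x,y),z\rangle=\langle A(z)x,y\rangle$, where
\[ A(z)=j(z)-j(Jz)\,J_\v+J_\v\, j(Jz)+J_\v\, j(z)\, J_\v . \]
Since this holds for all $x,y\in\v$, integrability on $\v$ is equivalent to $A(z)=0$ for every $z\in\z_0$.

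The final step is purely algebraic: expanding the commutator of $S(z)=j(Jz)-J_\v\circ j(z)$ with $J_\v$ and using $J_\v^2=-I$ yields exactly $[S(z),J_\v]=-A(z)$. Hence $A(z)=0$ for all $z$ if and only if $S(z)$ commutes with $J_\v$ for all $z$, which is precisely \eqref{integ2}. I expect the only real obstacle to be the bookkeeping in the pairing computation --- keeping track of which skew-symmetry is invoked at each step and checking that the factors of $J_\v$ land in the correct slots --- and the observation that makes everything collapse is that the four-term operator $A(z)$ is, up to sign, the commutator $[S(z),J_\v]$.
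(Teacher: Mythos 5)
Your proof is correct and follows essentially the same route as the paper: both reduce integrability to an operator identity on $\v$ by pairing the Nijenhuis condition against elements of $\z_0$ (using $\la j(z)v,w\ra=\la z,[v,w]\ra$ and the skew-symmetry of $J$ and $j(z)$) and then recognize that identity as the vanishing of the commutator of $S(z)$ with $J_\v$. The only differences are presentational: you spell out the reduction to $N_J|_{\v\times\v}$ and the fact that $N_J(\v,\v)\subset\z_0$, which the paper treats as ``straightforward,'' and you package the resulting four-term operator directly as $A(z)=-[S(z),J_\v]$ instead of the paper's two-stage rewriting of its equation \eqref{integ} into \eqref{integ1}.
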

\begin{proof}
For $z\in \z _0, \; u,v \in \v$, we compute
\[ \la j(Jz)u,v\ra=\la Jz,[u,v]\ra=-\la z, J[u,v]\ra=-\la z, [Ju,v]+[u,Jv]+J[Ju,Jv]\ra,
\]
where we have used the integrability of $J$. The last term above can be written as
\begin{eqnarray*} & & -\la j(z)Ju,v\ra-\la j(z)u,Jv\ra +\la Jz, [Ju,Jv]\ra, \\ 
&=& -\la j(z)Ju,v\ra-\la j(z)u,Jv\ra +\la j(Jz)Ju, Jv\ra .
\end{eqnarray*}
Therefore,  
\[ -\la j(Jz)u,v\ra+\la j(Jz)Ju, Jv\ra=\la j(z)Ju,v\ra+\la j(z)u,Jv\ra, 
\] 
and using that $J$ is skew-symmetric on the second term of both sides we obtain:
\[ -\la j(Jz)u,v\ra -\la Jj(Jz)Ju, v\ra=\la j(z)Ju,v\ra-\la Jj(z)u,v\ra .
\]
Since this holds for all $u, v \in \v$,  
the above equation is equivalent to 
\begin{equation}\label{integ}
j(Jz)+J_\v \circ j(Jz)\circ J_\v = J_\v \circ j(z)-j(z)\circ J_\v= [J_\v,j(z)], \quad \text{for all }z\in\z_0 ,
\end{equation}
where the Lie bracket above is the usual one on $\text{End} (\v)$.    It is straightforward that the integrability of $J$ on $\n$ is equivalent to \eqref{integ}. The left-hand side of \eqref{integ} can be written as $J_\v \circ [j(Jz), J_\v ]$. Therefore, $J$ is integrable if and only if 
\begin{equation}\label{integ1}
[J_\v , j(Jz)]=- J_\v \circ [j(z), J_\v ]= [J_\v , J_\v \circ j(z)], \quad\quad \text{for all }z\in\z_0 ,
\end{equation}
where the last equality follows by direct computation. 
Equation \eqref{integ1} means that $S(z)=j(Jz) - J_\v \circ j(z)$ commutes with $J_\v$ or, equivalently, 
$S(z)\in \mathfrak{gl}(n,\C)$ for all $z \in \z_0$,  
and the proposition follows.  
\end{proof}

The next corollary is a straightforward consequence of Proposition \ref{main1} and Lemma \ref{lem_S}.
\begin{corollary}\label{corol_main1} Let $(\n,J,\pint)$, $\z_0$, $\v$ and $S$ be as in Proposition \ref{main1} and let $\p\subset \z_0$ be an arbitrary subspace such that $\z_0=\p\oplus J\p$. Then 
$J$ is integrable if and only if $S(z)\in \mathfrak{gl}(n,\C)$ for all $z\in \p$. 
\end{corollary}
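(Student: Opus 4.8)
The plan is to deduce this corollary directly from Proposition~\ref{main1} together with the symmetry relation established in Lemma~\ref{lem_S}. By Proposition~\ref{main1}, $J$ is integrable if and only if $S(z)\in \mathfrak{gl}(n,\C)$ for \emph{every} $z\in\z_0$. The content of the corollary is that it suffices to check this condition on the smaller subspace $\p$, rather than on all of $\z_0$. Since $\z_0=\p\oplus J\p$, any $z\in\z_0$ decomposes as $z=p_1+Jp_2$ with $p_1,p_2\in\p$, so the key point is that the commuting condition $S(z)\in\mathfrak{gl}(n,\C)$ is preserved under the operation $z\mapsto Jz$ and under linear combinations.

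First I would note that the set $\{z\in\z_0 : S(z)\in\mathfrak{gl}(n,\C)\}$ is a linear subspace of $\z_0$, since $S$ is linear and $\mathfrak{gl}(n,\C)$ is a linear subspace of $\operatorname{End}(\v)$ (it is defined as the commutant of $J_\v$, hence closed under addition and scalar multiplication). Therefore the forward implication is trivial: if $J$ is integrable then $S(z)\in\mathfrak{gl}(n,\C)$ for all $z\in\z_0$, in particular for all $z\in\p$.

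The substance lies in the converse. Assume $S(z)\in\mathfrak{gl}(n,\C)$ for all $z\in\p$. By Lemma~\ref{lem_S} we have $S(Jz)=-J_\v\circ S(z)$. If $S(z)$ commutes with $J_\v$, then so does $-J_\v\circ S(z)$, because $J_\v$ commutes with itself and with any operator commuting with it; hence $S(Jz)\in\mathfrak{gl}(n,\C)$ whenever $S(z)\in\mathfrak{gl}(n,\C)$. Thus the condition holds on $\p$ and, by this observation, also on $J\p$. Since $\z_0=\p\oplus J\p$ and the set of $z$ with $S(z)\in\mathfrak{gl}(n,\C)$ is a linear subspace containing both $\p$ and $J\p$, it contains all of $\z_0$. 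Proposition~\ref{main1} then yields the integrability of $J$.

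I do not anticipate a serious obstacle here; the argument is a routine reduction and the corollary is labelled straightforward in the text. The only point requiring a moment's care is verifying that $\mathfrak{gl}(n,\C)$, viewed as the commutant of $J_\v$, is stable under left-composition with $J_\v$, i.e.\ that $T\circ J_\v=J_\v\circ T$ implies $(J_\v\circ T)\circ J_\v=J_\v\circ(J_\v\circ T)$; this is immediate since $J_\v$ commutes with $T$ and with itself. With this in hand the decomposition $\z_0=\p\oplus J\p$ and the linearity of $S$ close the argument.
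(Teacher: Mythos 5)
Your argument is correct and is exactly the one the paper intends: the text states the corollary is "a straightforward consequence of Proposition~\ref{main1} and Lemma~\ref{lem_S}", and you combine precisely those two results — linearity of $S$, the relation $S(Jz)=-J_\v\circ S(z)$, and the fact that the commutant of $J_\v$ is a subspace stable under composition with $J_\v$. Nothing further is needed.
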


Let $(\n,J,\pint)$, $\z_0$, $\v$ and $S$ be as in Proposition \ref{main1}.  
There are two possibilities:
\begin{enumerate} 
		\item[$\ri$]  $\n'$ is $J$-invariant, that is, $\z_0=\n '$,
		\item[$\rii$] $\n'_J=\n'\cap J\n'\subsetneq \n '$ (possibly  
 $\n'_J=0$). \end{enumerate}
In case (ii), fix 
\begin{equation}\label{z_1} \z_1\subset \n' \;\text{ such that }\; J\z_1 \perp\n' \;\text{ and  }\; \n'=\z_1\oplus \n'_J. 
\end{equation}
In case $\n'_J=0$, then $\z_1=\n '$. 

We have the orthogonal decomposition 
\begin{equation}\label{eq:dec-z_0}
    \z_0= J\z_ 1\oplus \underbrace{\z_1\oplus \n'_J}_{\n'} , \quad \text{with } \z_1=0 \;\text{ if $\n '$ is $J$ invariant}.
\end{equation}
		Since $ J\z_1$ is the orthogonal complement of $\n'$ in $\z_0$,  $\ker(j)= J\z_1$.
Let $\v_0\subset \v$ be a subspace such that $\v$ decomposes orthogonally as $\v=\v_0\oplus J\v_0$ and define an involution $\sigma: \operatorname{End}(\v)\to \operatorname{End}(\v)$ as follows:
\[ \sigma (T)=P\circ T, \quad \text{ where } P|_{\v_0}=I,\quad P|_{J\v_0}=-I.
\] 
This induces a decomposition 
\begin{equation}\label{decomp} \operatorname{End}(\v)= \mathfrak{gl}(n,\C) \oplus \sigma \left( \mathfrak{gl}(n,\C) \right),
\end{equation}
so given $T\in \operatorname{End}(\v)$ we can write 
\begin{equation}\label{components}
T=T_+ +T_- \, , \quad \text{ with } \;  T_+\in \mathfrak{gl}(n,\C), \quad T_-\in \sigma \left( \mathfrak{gl}(n,\C) \right).\end{equation} 
Note that $T\in \mathfrak{gl}(n,\C)$ if and only if $T=T_+$. 
If $T=\begin{pmatrix} T_1&T_2\\T_3&T_4    
\end{pmatrix}$ with respect to the decomposition $\v=\v_0\oplus J\v_0$, then $T_+$ and $T_-$ are given by
\begin{equation}\label{eq:T+-}
    T_+=\frac 12\begin{pmatrix} T_1+T_4&T_2-T_3 \\ T_3-T_2&T_1+T_4        
    \end{pmatrix}, \qquad\quad  T_-=\frac 12\begin{pmatrix} T_1-T_4&T_2+T_3 \\ T_2+T_3&T_4-T_1        
    \end{pmatrix}.
\end{equation}
Note that \[ \sigma \left( \mathfrak{gl}(n,\C) \right)=\left\{ T\in \operatorname{End}(\v): T\circ J=-J\circ T \right\}. \] Moreover, if $T\in \s\o(2n )$, then $T_\pm \in \s\o(2n )$, therefore, it turns out that 
\[ \s \o (2n)=  \mathfrak{u}(n)\oplus \sigma \left( \mathfrak{gl}(n,\C) \right) \cap \s \o (2n).  \]

The proof of the following lemma is straightforward.
\begin{lemma}\label{lem:+-}
    Let $T, J\in \operatorname{End}(\v)$ such that $J^2=-I$ and let $\v_0$ be a subspace of $\v$ such that $\v =\v_0\oplus J\v_0$. Then, $(J\circ T)_\pm = J\circ \left( T_\pm\right)$, where $T_\pm$ are given by \eqref{components}.
\end{lemma}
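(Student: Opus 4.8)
The plan is to read the decomposition \eqref{decomp} as an eigenspace splitting and to show that left composition with $J$ respects it. Write $L_J$ for the operator $T\mapsto J\circ T$ on $\operatorname{End}(\v)$. Since the assignments $T\mapsto T_+$ and $T\mapsto T_-$ are precisely the projections of $\operatorname{End}(\v)$ onto the two summands $\mathfrak{gl}(n,\C)$ and $\sigma\left(\mathfrak{gl}(n,\C)\right)$ of \eqref{decomp}, the asserted identity $(J\circ T)_\pm=J\circ(T_\pm)$ is equivalent to the statement that $L_J$ commutes with these projections. This holds as soon as $L_J$ maps each summand into itself.

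The key step is therefore to check this invariance, which is immediate from $J^2=-I$. If $T\in\mathfrak{gl}(n,\C)$, i.e.\ $T\circ J=J\circ T$, then $(J\circ T)\circ J=J\circ(T\circ J)=J\circ J\circ T=J\circ(J\circ T)$, so $J\circ T$ again commutes with $J$ and lies in $\mathfrak{gl}(n,\C)$. If instead $T\in\sigma\left(\mathfrak{gl}(n,\C)\right)$, i.e.\ $T\circ J=-J\circ T$, the same manipulation gives $(J\circ T)\circ J=-J\circ(J\circ T)$, so $J\circ T$ anticommutes with $J$ and lies in $\sigma\left(\mathfrak{gl}(n,\C)\right)$. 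Hence $L_J$ preserves each summand of \eqref{decomp}, it commutes with the associated projections, and the lemma follows.

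As a purely computational alternative, one may instead use the closed form $T_\pm=\tfrac12\left(T\mp J\circ T\circ J\right)$ for the components in \eqref{components} (readily matched with \eqref{eq:T+-} via the block form of $J$ relative to $\v=\v_0\oplus J\v_0$), substitute $J\circ T$ for $T$, and simplify with $J^2=-I$; both $(J\circ T)_\pm$ and $J\circ(T_\pm)$ then reduce to $\tfrac12\left(J\circ T\pm T\circ J\right)$. Either way there is no genuine obstacle here: the only point requiring attention is the consistent use of $J^2=-I$ when carrying a factor of $J$ across the middle of $J\circ T\circ J$, which is exactly what produces the matching sign in each of the two cases.
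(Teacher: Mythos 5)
Your proof is correct; the paper offers no argument for this lemma (it is simply declared straightforward), and either of your two routes fills it in validly — the observation that left composition with $J$ preserves both summands of \eqref{decomp} and therefore commutes with the projections $T\mapsto T_\pm$, or the direct computation via the closed form $T_\pm=\tfrac12\left(T\mp J\circ T\circ J\right)$, which does match \eqref{eq:T+-} and reduces both $(J\circ T)_\pm$ and $J\circ (T_\pm)$ to $\tfrac12\left(J\circ T\pm T\circ J\right)$. One minor remark: the invariance of each summand under left composition with $J$ uses only associativity and the (anti)commutation relation, so the hypothesis $J^2=-I$ enters not there but in making conjugation by $J$ an involution, i.e.\ in guaranteeing that \eqref{decomp} is the genuine eigenspace splitting on which your projection argument relies.
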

We introduce some notation that will be used in the next proposition. Let $j: \z_0\to \s\o(2n)$  as defined in \eqref{jz} and set \begin{equation}\label{eq:j+-}
    \begin{split} 
j_+ :\z_0\to \u(n),\qquad\quad   j_-: \, & \z_0 \to \sigma \left( \mathfrak{gl}(n,\C) \right) \cap \s \o (2n),\\
z\mapsto j(z)_+ , \;\qquad\quad\quad   & z\mapsto j(z)_- .
\end{split} \end{equation}
 In case $\n'_J\neq 0$,  decompose  
 $ \n'_J= \b\oplus J\b$ orthogonally for some non-zero subspace $\b$. 
\begin{proposition}\label{prop:main2} Let $(\n,J,\pint)$, $\v$ and $\z_0$  be as in Proposition \ref{main1}, $\z_1$ as in \eqref{z_1} and $\b$ as above. The following statements are equivalent:
\begin{enumerate}
\item[$\ri$] $J$ is integrable, 
\item[$\rii$] $j(z) \in \mathfrak{u}(n)$ for all $z\in\z_1$ and $j(Jz)-\frac12\left[ J_\v ,j(z)\right]\in \mathfrak{u}(n)$ for all $z\in \b$,
\item[$\riii$] $j(z) \in \mathfrak{u}(n)$ for all $z\in\z_1$ and $j_-(Jz)= \frac12\left[ J_\v ,j(z)\right] $ for all $z\in \b$ $\rlp$see \eqref{eq:j+-}$\rrp$. 
\end{enumerate}
\end{proposition}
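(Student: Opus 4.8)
The plan is to reduce everything to the integrability criterion of Corollary~\ref{corol_main1} by choosing a convenient subspace $\p$ with $\z_0=\p\oplus J\p$, and then to analyze the condition $S(z)\in\mathfrak{gl}(n,\C)$ separately on $\z_1$ and on $\b$. Concretely, I would set $\p=\z_1\oplus\b$: the orthogonal decomposition \eqref{eq:dec-z_0} together with $\n'_J=\b\oplus J\b$ gives $\z_0=J\z_1\oplus\z_1\oplus\b\oplus J\b$, so indeed $\z_0=\p\oplus J\p$. Corollary~\ref{corol_main1} then says that $J$ is integrable if and only if $S(z)\in\mathfrak{gl}(n,\C)$ for all $z\in\z_1$ and for all $z\in\b$, which splits the argument into exactly the two clauses appearing in (ii) and (iii).

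For $z\in\z_1$ I would use that $\ker(j)=J\z_1$ (recorded just after \eqref{eq:dec-z_0}), so that $Jz\in\ker(j)$ and hence $j(Jz)=0$. Thus $S(z)=-J_\v\circ j(z)$ by \eqref{S_z}, and a short computation shows that $S(z)$ commutes with $J_\v$ if and only if $j(z)$ does, i.e.\ $j(z)\in\mathfrak{gl}(n,\C)$; since $j(z)\in\s\o(2n)$, this is equivalent to $j(z)\in\u(n)$. This yields the first clause of both (ii) and (iii).

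The heart of the matter is the case $z\in\b$. Here I would first record the identity $\tfrac12[J_\v,j(z)]=J_\v\circ j_-(z)$, which follows from the definition \eqref{eq:j+-} of the $\pm$ components, since $j_+(z)$ commutes and $j_-(z)$ anticommutes with $J_\v$. Applying Lemma~\ref{lem:+-} to $S(z)=j(Jz)-J_\v\circ j(z)$ then gives $S(z)_-=j_-(Jz)-J_\v\circ j_-(z)$. Consequently the requirement $S(z)\in\mathfrak{gl}(n,\C)$, that is $S(z)_-=0$, is equivalent to $j_-(Jz)=J_\v\circ j_-(z)=\tfrac12[J_\v,j(z)]$, which is precisely condition (iii) on $\b$.

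Finally, for the equivalence of (ii) and (iii) I would split $j(Jz)-\tfrac12[J_\v,j(z)]$ into its $+$ and $-$ parts. Since $\tfrac12[J_\v,j(z)]=J_\v\circ j_-(z)$ lies entirely in the $-$ component, the $+$ part of the whole expression is $j_+(Jz)\in\u(n)$, while its $-$ part is exactly $S(z)_-=j_-(Jz)-J_\v\circ j_-(z)$. Hence the expression lies in $\u(n)$ precisely when $S(z)_-=0$, which is again condition (iii), giving $(ii)\Leftrightarrow(iii)$. I expect the only delicate point to be keeping the $\pm$ bookkeeping consistent throughout — in particular verifying $\tfrac12[J_\v,j(z)]=J_\v\circ j_-(z)$ and that this term sits entirely in the $-$ component — but these are brief computations once Lemma~\ref{lem:+-} and the characterization $\ker(j)=J\z_1$ are in place.
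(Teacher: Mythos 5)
Your proof is correct, and while it follows the same overall strategy as the paper (reduce integrability to $S(z)\in\mathfrak{gl}(n,\C)$ via Proposition~\ref{main1}, then treat $\z_1$ and $\b$ separately), the bookkeeping is genuinely different. The paper splits $S(z)$ into its symmetric and skew-symmetric parts, observes that $S(z)_{sym}=-\tfrac12\{J_\v,j(z)\}$ automatically lies in $\mathfrak{gl}(n,\C)$, and so reduces everything to $S(z)_{skew}=j(Jz)-\tfrac12[J_\v,j(z)]\in\u(n)$, which is condition (ii) directly. You instead work with the commuting/anticommuting decomposition $T=T_++T_-$ of \eqref{components}, using Lemma~\ref{lem:+-} and the identity $\tfrac12[J_\v,j(z)]=J_\v\circ j_-(z)$ to compute $S(z)_-=j_-(Jz)-J_\v\circ j_-(z)$ and arrive at condition (iii) directly; the two routes are reconciled because $S(z)_{sym}\in\mathfrak{gl}(n,\C)$ forces $S(z)_-=(S(z)_{skew})_-$. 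Your version has two concrete advantages over the paper's rather terse proof: you invoke Corollary~\ref{corol_main1} with the explicit choice $\p=\z_1\oplus\b$ to justify why it suffices to test $S$ on $\z_1$ and $\b$ alone, and you spell out the equivalence of (ii) and (iii) by isolating the $+$ and $-$ components of $j(Jz)-\tfrac12[J_\v,j(z)]$ — both points the paper leaves implicit. All the intermediate identities you flag as delicate ($j(Jz)=0$ for $z\in\z_1$ via $\ker(j)=J\z_1$, and $J_\v\circ j_-(z)$ lying entirely in the $-$ component) check out.
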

\begin{proof}
Since $j(z), \, j(Jz), \, J_\v\in \s \o (2n)$ for $z\in \z_0$, the symmetric and skew-symmetric parts   of $S(z)$ are given, respectively, by:
\begin{equation}\label{S^a} S(z)_{sym}=-\frac12\left\{ J_\v ,  j(z)\right\} , \qquad\quad S(z)_{skew}=j(Jz)-\frac12[J_\v ,j(z)],  
\end{equation}
where $\{A,B\}=A\circ B +B\circ A$ is the anticommutator of the operators $A$ and $B$. 
Note that $S(z)_{sym}\in \mathfrak{gl}(n,\C)$, hence,  
\[ S(z)\in \mathfrak{gl}(n,\C) \;\text{ if and only if }\; S(z)_{skew} \in \mathfrak{gl}(n,\C) ,\] 
and therefore, 
\[S(z)_{skew} \in \mathfrak{gl}(n,\C) \cap \s \o (2n)=  \mathfrak{u}(n) \; \text{ for all } z\in\z_0. \]
It follows that:
\begin{equation}\label{char} j(Jz)=\frac12\left[ J_\v ,j(z)\right]+ S(z)_{skew}, \quad \text{ where }  S(z)_{skew} \in\mathfrak{u}(n), \; \text{ for all } z\in\z_0.
\end{equation}
Note that $\ker j=  J\z_1 $ so, in view of \eqref{char}, $j(z)\in \mathfrak{u}(n)$ for all $z \in \z_1$. 
\end{proof}

The following lemma  follows  from \cite[Lemma 1.1]{Bar} and  \cite[Remark 4.8]{BM}. 
\begin{lemma}\label{rem:abelian}\cite{Bar, BM} Let $J$ be a complex structure on a 2-step nilpotent Lie algebra $\n$. Fix a  Hermitian inner product $\pint$ on $(\n ,J)$ and set $\z_0=\n '+J\n '$.  Let $\v$ be the orthogonal complement of $\z_0$ in $\n$ and let $j:\z_0 \to \s\o (\v)$ be as in \eqref{jz}.  Then \begin{enumerate}
 \item[$\ri$] 
 $J$ is abelian if and only if the center $\z$ is $J$-invariant and $j(z)\circ J_\v =J_\v\circ j(z)$ for all $z\in\z_0$.
 \item[$\rii$]  $J$ is bi-invariant if and only if the commutator $\n '$ is $J$-invariant and $j(z)\circ J_\v = -J_\v\circ j(z)$ for all $z\in\n '$.
 \end{enumerate}
\end{lemma}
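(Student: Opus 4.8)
The plan is to translate both operator conditions on $j$ into bracket identities restricted to $\v$, and then match these against the defining relations of abelian and bi-invariant structures. Three preliminary observations set this up. First, $\z_0=\n'+J\n'$ is automatically $J$-invariant (since $J^2=-I$), so compatibility of $\pint$ with $J$ forces its orthogonal complement $\v$ to be $J$-invariant as well; hence $J_\v:=J|_\v$ is a well-defined skew-symmetric endomorphism of $\v$. Second, since $\n$ is $2$-step we have $[v,w]\in\n'\subset\z_0$ for all $v,w\in\v$, and $\pint$ is positive definite, hence non-degenerate on $\z_0$; thus an element of $\z_0$ is determined by its pairings against all $z\in\z_0$. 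These two facts are exactly what let me pass freely between the language of $j$ and that of brackets.

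The key computation is to expand, for $z\in\z_0$ and $v,w\in\v$,
\[ \la j(z)J_\v v,w\ra=\la z,[Jv,w]\ra, \qquad \la J_\v j(z)v,w\ra=-\la z,[v,Jw]\ra, \]
the second using skew-symmetry of $J_\v$ together with \eqref{jz}. Subtracting and adding gives
\[ \la [J_\v,j(z)]v,w\ra=-\la z,[Jv,w]+[v,Jw]\ra, \qquad \la \{J_\v,j(z)\}v,w\ra=\la z,[Jv,w]-[v,Jw]\ra. \]
Ranging over all $z\in\z_0$ and using non-degeneracy on $\z_0$, I obtain the two equivalences at the heart of the proof: $j(z)$ commutes with $J_\v$ for every $z\in\z_0$ if and only if $[Jv,w]=-[v,Jw]$ for all $v,w\in\v$, and $j(z)$ anticommutes with $J_\v$ for every $z\in\z_0$ if and only if $[Jv,w]=[v,Jw]$ for all $v,w\in\v$.

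For $\ri$, in the forward direction I first note that an abelian $J$ has $J$-invariant center: if $z\in\z$ then $[Jz,Jy]=[z,y]=0$ for all $y$, and surjectivity of $J$ forces $Jz\in\z$. Abelianness also yields $[Jx,y]=-[x,Jy]$ for all $x,y$ (replace $y$ by $Jy$ in $[Jx,Jy]=[x,y]$), which restricted to $\v$ is exactly the commuting condition via the equivalence above. Conversely, $J$-invariance of $\z$ upgrades to $\z_0\subset\z$, so $\z_0$ is central; writing $x=x_0+v$, $y=y_0+w$ with $x_0,y_0\in\z_0$ and $v,w\in\v$, the central parts drop out of every bracket, so I only need $[Jv,Jw]=[v,w]$ on $\v$. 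This is immediate from $[Jv,w]=-[v,Jw]$ by substituting $w\mapsto Jw$.

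For $\rii$ the structure is parallel, but the converse is where the real work lies. Bi-invariance gives $J\n'\subset\n'$ at once, and $[Jv,w]=J[v,w]=[v,Jw]$ yields the anticommuting condition. Conversely, $J$-invariance of $\n'$ collapses $\z_0=\n'$ (again central, so the hypothesis over $\n'$ is the hypothesis over $\z_0$), reducing bi-invariance to the identity $[Jv,w]=J[v,w]$ on $\v$; but the anticommuting hypothesis only delivers the weaker symmetry $[Jv,w]=[v,Jw]$, so here I must invoke integrability. Feeding $[Jv,w]=[v,Jw]$ into $N_J(v,w)=0$ from \eqref{nijen}, together with the consequence $[Jv,Jw]=-[v,w]$ obtained by substituting $w\mapsto Jw$ in the symmetry, forces $J[Jv,w]=-[v,w]$, and applying $J$ gives $[Jv,w]=J[v,w]$. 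This use of the Nijenhuis identity to convert the symmetry $[Jv,w]=[v,Jw]$ into the genuinely $J$-linear relation $[Jv,w]=J[v,w]$ is the one step that is not purely formal, and I expect it to be the main obstacle; everything else is bookkeeping with the central decomposition $\n=\z_0\oplus\v$.
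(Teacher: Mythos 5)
Your proof is correct. The paper itself does not prove this lemma --- it is imported wholesale by citing \cite[Lemma 1.1]{Bar} and \cite[Remark 4.8]{BM} --- so what you have produced is a self-contained verification rather than an alternative to an argument in the text. Your method is, however, exactly in the spirit of the paper's own toolkit: the duality computation converting $[J_\v,j(z)]=0$ (resp.\ $\{J_\v,j(z)\}=0$) into $[Jv,w]=-[v,Jw]$ (resp.\ $[Jv,w]=[v,Jw]$) via non-degeneracy of $\pint$ on $\z_0\supset\n'$ is the same mechanism used in the proof of Proposition \ref{main1} to translate integrability into the operator identity \eqref{integ}. All the delicate points are handled: you justify that $\v$ is $J$-invariant (so $J_\v$ makes sense), you use the hypothesis ``$\z$ is $J$-invariant'' to get $\z_0\subset\z$ so that the central parts of $x,y$ drop out of every bracket, and in part \rii{} you correctly identify that the anticommutation relation alone only yields the symmetry $[Jv,w]=[v,Jw]$, and that one must feed this together with its consequence $[Jv,Jw]=-[v,w]$ into $N_J(v,w)=0$ to upgrade it to the $J$-linearity $[Jv,w]=J[v,w]$; that computation ($2[v,w]+2J[Jv,w]=0$) is right and is indeed the only step that is not formal bookkeeping. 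One cosmetic remark: the lemma's condition in \rii{} is stated only for $z\in\n'$ rather than $z\in\z_0$, but as you observe the $J$-invariance of $\n'$ forces $\z_0=\n'$, so nothing is lost.
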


\begin{remark} The above lemma also holds for $\z_0=\z$,  $\v = \z^\perp$  and $j:\z \to \s\o (\v)$ defined  in   \eqref{jz}.    
\end{remark}

Let $(\n,J,\pint)$, $\z_0$, $\v$ and $S$ be as in Proposition \ref{main1} and assume that $J$ is integrable.   
Given a $J$-invariant subspace $\q$ of $\z_0=n '+J\n '$, we define
\[     
 \q _\pm :=\left\{ z\in \q : j(z)=j_\pm(z)\right\}. \]

 
 We show next that  $\q _+$ is  $J$-invariant. On the other hand,    $\q _-$ is not  $J$-invariant unless $\ker \left(S|_{\q} \right) =\q _-$.  Indeed, $\ker \left(S|_{\q} \right)$ is the maximal $J$-invariant subspace of $\q _-$.
\begin{lemma}\label{prop:J-inv} Let $(\n,J,\pint)$ be a Hermitian Lie algebra such that $\n$ is 2-step nilpotent and  $J$ is a  2-step complex structure on $\n$. Let $\q$ be a $J$-invariant subspace of $\z_0$.  Then
\begin{enumerate}
    \item [$\ri$]   $\q _+$ is a $J$-invariant   subspace of $\q$,
    \item [$\rii$] $\ker \left(S|_{\q} \right) \subset \q _-$, 
    \item [$\riii$] $\ker \left(S|_{\q} \right)$ is the maximal $J$-invariant subspace of $\q _-$. In particular, if $\q _-$ is $J$-invariant, then $\q _- =\ker \left(S|_{\q} \right)$. 
\end{enumerate}
\end{lemma}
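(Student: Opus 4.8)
The plan is to reformulate membership in $\q_\pm$ as a commutation/anticommutation condition for $j$ with $J_\v$ and then read off all three assertions from the splitting of $S(z)$ into symmetric and skew-symmetric parts together with the integrability identity \eqref{char}. Unravelling the definitions, $z\in\q_+$ means $j_-(z)=0$, i.e. $j(z)$ commutes with $J_\v$, while $z\in\q_-$ means $j_+(z)=0$, i.e. $j(z)$ anticommutes with $J_\v$; in particular $\q_+=\ker(j_-|_{\q})$ and $\q_-=\ker(j_+|_{\q})$ are linear subspaces of $\q$. The three ingredients I will use are: the formulas \eqref{S^a}, namely $S(z)_{sym}=-\tfrac12\{J_\v,j(z)\}$ and $S(z)_{skew}=j(Jz)-\tfrac12[J_\v,j(z)]$; the integrability identity \eqref{char}, which reads $j(Jz)=\tfrac12[J_\v,j(z)]+S(z)_{skew}$ with $S(z)_{skew}\in\mathfrak{u}(n)$; and Lemma~\ref{lem_S}, by which $\ker S$ is $J$-invariant.

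For \ri, I would take $z\in\q_+$, so that $[J_\v,j(z)]=0$. Since $\q$ is $J$-invariant we have $Jz\in\q$, and \eqref{char} collapses to $j(Jz)=S(z)_{skew}\in\mathfrak{u}(n)$; thus $j(Jz)$ commutes with $J_\v$, whence $Jz\in\q_+$. For \rii, let $z\in\ker(S|_{\q})$. Then $S(z)_{sym}=-\tfrac12\{J_\v,j(z)\}=0$, so $j(z)$ anticommutes with $J_\v$, which is exactly the statement that $z\in\q_-$; hence $\ker(S|_{\q})\subset\q_-$.

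For \riii, I would first observe that $\ker(S|_{\q})=\ker S\cap\q$ is $J$-invariant, being the intersection of the $J$-invariant subspaces $\ker S$ (Lemma~\ref{lem_S}) and $\q$; by \rii it is therefore a $J$-invariant subspace of $\q_-$. To prove maximality, let $\w\subset\q_-$ be any $J$-invariant subspace and fix $z\in\w$. Then $z,Jz\in\q_-$, so both $j(z)$ and $j(Jz)$ anticommute with $J_\v$. The anticommutation of $j(z)$ gives at once $S(z)_{sym}=0$ and $\tfrac12[J_\v,j(z)]=J_\v j(z)$, so that $S(z)_{skew}=j(Jz)-J_\v j(z)$; each of these two summands anticommutes with $J_\v$, hence so does $S(z)_{skew}$. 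On the other hand $S(z)_{skew}\in\mathfrak{u}(n)$ commutes with $J_\v$, and an endomorphism that both commutes and anticommutes with the invertible $J_\v$ must vanish. Therefore $S(z)=S(z)_{sym}+S(z)_{skew}=0$, i.e. $z\in\ker(S|_{\q})$, so $\w\subset\ker(S|_{\q})$. The final clause follows by applying this with $\w=\q_-$ in the case where $\q_-$ is $J$-invariant, which together with \rii yields $\q_-=\ker(S|_{\q})$.

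The only delicate point is the maximality step in \riii: the argument hinges on noticing that once $z$ and $Jz$ both lie in $\q_-$, the skew part $S(z)_{skew}$ is forced to anticommute with $J_\v$, which clashes with the integrability constraint $S(z)_{skew}\in\mathfrak{u}(n)$ and thereby forces $S(z)_{skew}=0$. Everything else is routine bookkeeping of how $\{\,\cdot\,,\cdot\,\}$ and $[\,\cdot\,,\cdot\,]$ with $J_\v$ interact with the commuting/anticommuting summands of $j$.
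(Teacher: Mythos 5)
Your proof is correct and follows essentially the same route as the paper's: both parts rest on the decomposition \eqref{S^a}, the integrability identity \eqref{char} (equivalently, $S(z)_{skew}\in\u(n)$), and the $J$-invariance of $\ker S$ from Lemma~\ref{lem_S}. The only cosmetic difference is in the maximality step of \riii: where the paper derives $\ker S=\{z:\, z,Jz\in(\z_0)_-\}$ by showing $S(z)$ is simultaneously symmetric and skew-symmetric, you show $S(z)_{skew}$ simultaneously commutes and anticommutes with the invertible $J_\v$ --- the same computation in a different dress.
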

\begin{proof}
    It is clear from \eqref{char} that, if  $z\in \z_0$,  then $j(z)\in \u(n)$ if and only if $j(Jz)\in \u(n)$. In particular, $\q _+$ is $J$-invariant and (i) follows. 


Note that for $z\in \z_0$, \eqref{char} implies that $z\in \rlp\z_0\rrp_-$ if and only if 
$S(Jz)_{skew}=0$. Since $S(Jz)_{skew}=0$ if and only if $S(z)$ is skew-symmetric, we obtain 
\begin{equation}\label{eq:kerS} \ker S = \{ z\in \z_0 :  z, \, Jz\in \rlp\z_0\rrp_- \}, \end{equation}
and (ii) follows.

If $W$ is a $J$-invariant    subspace of $\q _-$,  then each $w\in W$ satisfies that both, $w$ and $Jw$ lie in $W\subset \q_-$, hence, \eqref{eq:kerS} implies that $w\in \ker \left(S|_{\q} \right)$. Therefore, 
$W\subset \ker \left(S|_{\q} \right)$, and the proof of (iii) is complete.

\end{proof}

It was proved in \cite[Lemma 3.1 (viii)]{Rol} that if $J$ is a complex structure on a  nilpotent Lie algebra $\n$ such that $\n '$ contains no non-trivial $J$-invariant subspace, then $J$ is abelian. In other words, if $\n ' _J=0$, then $J$ is abelian. We show next that the converse does not hold for 2-step nilpotent Lie algebras. 

\begin{corollary}\label{abelian}  Let $J$ be a complex structure on a $2$-step nilpotent Lie algebra $\n$. Then  \begin{enumerate}
    \item[$\ri$] 
$J$ is abelian if and only if $\z$ is $J$-invariant and   $\n'_J=\rlp\n_J'\rrp _+$. 
\item[$\rii$] $J$ is bi-invariant if and only if any of the  following equivalent conditions is satisfied:
\begin{enumerate}  \item [$\rm{(a)}$]  $\n '=\rlp\n_J'\rrp _-$, 
\item [$\rm{(b)}$]  the map $j: \z_0 \to \operatorname{End}(\v)$ is complex linear, where $\z_0$ and $\operatorname{End}(\v)$ are regarded as complex vector spaces via $J|_{\z_0}$ and $\mathcal L _{J_\v}: \operatorname{End}(\v)\to \operatorname{End}(\v)$, $T \mapsto J_\v \circ T$, respectively. 
\end{enumerate}
\end{enumerate}
\end{corollary}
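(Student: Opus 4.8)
The plan is to prove Corollary~\ref{abelian} by translating the defining bracket conditions for $J$ abelian (resp.\ bi-invariant) into conditions on the map $j$ using Lemma~\ref{rem:abelian}, and then matching these against the decomposition $j=j_++j_-$ introduced in \eqref{eq:j+-}. For part $\ri$, first I would invoke Lemma~\ref{rem:abelian}~$\ri$: $J$ is abelian exactly when $\z$ is $J$-invariant and $j(z)\circ J_\v=J_\v\circ j(z)$ for all $z\in\z_0$, i.e.\ $j(z)\in\u(n)$ for all $z\in\z_0$. By the definition of $j_+$ as the $\u(n)$-component and the characterization $\q_+=\{z\in\q:j(z)=j_+(z)\}$, the condition $j(z)\in\u(n)$ for all $z\in\z_0$ says precisely $\z_0=(\z_0)_+$. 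The point is then to descend this from $\z_0=\n'+J\n'$ to $\n'_J=\n'\cap J\n'$; since $\z$ is assumed $J$-invariant and $\n'\subset\z$, one has $J\n'\subset\z$, so $\n'$ is $J$-invariant and hence $\z_0=\n'=\n'_J$. Thus under the hypothesis that $\z$ is $J$-invariant, the two statements ``$j(z)\in\u(n)$ for all $z\in\z_0$'' and ``$\n'_J=(\n'_J)_+$'' coincide, giving $\ri$.

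For part $\rii$, I would again start from Lemma~\ref{rem:abelian}~$\rii$: $J$ is bi-invariant iff $\n'$ is $J$-invariant and $j(z)\circ J_\v=-J_\v\circ j(z)$ for all $z\in\n'$. The anticommuting condition $j(z)\circ J_\v=-J_\v\circ j(z)$ means exactly $j(z)\in\sigma(\mathfrak{gl}(n,\C))$, equivalently $j(z)=j_-(z)$, in view of the identification $\sigma(\mathfrak{gl}(n,\C))=\{T:T\circ J=-J\circ T\}$ recorded before Lemma~\ref{lem:+-}. When $\n'$ is $J$-invariant we have $\z_0=\n'=\n'_J$, so this reads $\n'=(\n'_J)_-$, which is condition $\rm{(a)}$. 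Conversely, if $\n'=(\n'_J)_-$ then $\n'_J=\n'$ forces $\n'$ to be $J$-invariant and every $j(z)$ with $z\in\n'$ anticommutes with $J_\v$, recovering bi-invariance; so $\rm{(a)}$ is equivalent to bi-invariance.

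It remains to show $\rm{(a)}\Leftrightarrow\rm{(b)}$. Condition $\rm{(b)}$ asks that $j$ be complex linear for the structures $J|_{\z_0}$ on the source and $\mathcal L_{J_\v}$ on the target, i.e.\ $j(Jz)=J_\v\circ j(z)$ for all $z\in\z_0$. I would unwind this via the integrability formula \eqref{char}, $j(Jz)=\tfrac12[J_\v,j(z)]+S(z)_{skew}$ with $S(z)_{skew}\in\u(n)$. Writing $j(z)=j_+(z)+j_-(z)$ and using $J_\v\circ j(z)=J_\v\circ j_+(z)+J_\v\circ j_-(z)$ together with the commutation/anticommutation properties (so $[J_\v,j_+(z)]=0$ and $[J_\v,j_-(z)]=2J_\v\circ j_-(z)$, while $\{J_\v,j_+(z)\}=2J_\v\circ j_+(z)$ and $\{J_\v,j_-(z)\}=0$), one finds that $j(Jz)=J_\v\circ j(z)$ holds for all $z$ iff $j_+(z)=0$ for all $z\in\z_0$, i.e.\ $\z_0=(\z_0)_-$, which under $J$-invariance of $\n'$ is $\n'=(\n'_J)_-$, namely $\rm{(a)}$. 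I expect the main obstacle to be the bookkeeping in this last equivalence: one must carefully separate the $\u(n)$ and $\sigma(\mathfrak{gl}(n,\C))$ components in \eqref{char}, confirm that the complex-linearity condition in $\rm{(b)}$ indeed kills precisely the $j_+$ part, and handle the kernel $\ker j=J\z_1$ (which vanishes when $\n'$ is $J$-invariant, so that $\z_1=0$ and $\z_0=\n'_J$) so that the statements about $\z_0$ and about $\n'_J$ are genuinely the same.
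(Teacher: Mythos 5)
Your part (ii) is essentially the paper's argument: Lemma \ref{rem:abelian}~(ii) plus the observation that $\n'=(\n'_J)_-$ forces $\z_1=0$, and your plan for (a)$\Leftrightarrow$(b) via separating the $\mathfrak{u}(n)$ and $\sigma(\mathfrak{gl}(n,\C))$ components in \eqref{char} does work (the paper goes instead through $\ker S$ and \eqref{eq:kerS}, but the computations are equivalent). The one point you leave dangling there is that in the direction (b)$\Rightarrow$(a) you may not assume $\n'$ is $J$-invariant; you must first derive $\z_1=0$ from complex linearity itself (for $w\in J\z_1=\ker j$ one gets $j(Jw)=J_\v\circ j(w)=0$, so $Jw\in\z_1\cap J\z_1=0$), and this is doable.

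Part (i), however, has a genuine gap. You assert that $\z$ being $J$-invariant together with $\n'\subset\z$ gives $J\n'\subset\z$, ``so $\n'$ is $J$-invariant and hence $\z_0=\n'=\n'_J$.'' This is false: $J\n'\subset\z$ says nothing about $J\n'$ being contained in $\n'$. A concrete counterexample is $\n=\h_3\oplus\R$ with $[e_1,e_2]=e_3$, $Je_1=e_2$, $Je_3=e_4$: this $J$ is abelian, $\z=\operatorname{span}\{e_3,e_4\}$ is $J$-invariant, yet $\n'=\R e_3$ is not $J$-invariant, $\n'_J=0$, and $\z_0=\z\neq\n'_J$. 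With the identification $\z_0=\n'_J$ gone, your converse direction collapses: the hypothesis $\n'_J=(\n'_J)_+$ only controls $j$ on $\n'_J$, whereas Lemma \ref{rem:abelian}~(i) requires $j(z)\in\mathfrak{u}(n)$ for \emph{all} $z\in\z_0=J\z_1\oplus\z_1\oplus\n'_J$. The missing ingredient is the paper's use of $\ker j=J\z_1$ together with \eqref{char}: for $w\in J\z_1$ one has $j(w)=0$, hence $j(Jw)=S(w)_{skew}\in\mathfrak{u}(n)$, which gives $j(z)\in\mathfrak{u}(n)$ for $z\in\z_1$ and closes the argument on all of $\z_0$.
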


\begin{proof} To prove (i), assume first that $J$ is abelian. 
According to Lemma~\ref{rem:abelian} (i), $\z$ is $J$-invariant and $j(z)$ commutes with $J_\v$ for all $z\in\z_0$, that is, $j(z)\in\mathfrak{u}(n) $ for all $z\in\z_0$. In particular, $j(z)\in\mathfrak{u}(n) $ for all $z\in\n '_J$, that is, $\n'_J=\rlp\n_J'\rrp _+$. Conversely, if $\z$ is $J$-invariant and $\n'_J=\rlp\n_J'\rrp _+$, then $\n'=\z_1\oplus \rlp\n_J'\rrp _+$ (see \eqref{z_1}), $\z_0= J\z_1\oplus \z_1\oplus \rlp\n_J'\rrp _+$ and 
 \eqref{char} implies that $j(z)\in \mathfrak{u}(n) $ for all $z\in\z_0$, since   $ J\z_1=\ker (j)$. Lemma~\ref{rem:abelian} (i) implies that $J$ is abelian, and (i) follows.

 Assume next that  $J$ is bi-invariant,  then Lemma~\ref{rem:abelian} (ii) implies that $\n '$ is $J$-invariant and $j(z)$ anticommutes with $J_\v$ for all $z\in \n '$. Hence, $\n'=\n'_J$ and $j(z)$ anticommutes with $J_\v$ for all $z\in \n '_J$, that is, 
 $\n '_J=\rlp\n_J'\rrp _-$. Therefore, $\n '=\rlp\n_J'\rrp _-$. 
 Conversely, if $\n '=\rlp\n_J'\rrp _-$, then $\z_1=0$, that is, $\n'= \n '_J$. Therefore,    $\n'$  is $J$-invariant and    $j(z)$ anticommutes with $J_\v $ for all $z\in \n '$, hence, $J$ is bi-invariant (see Lemma~\ref{rem:abelian} (ii)). 

Finally, we show that (a) and (b) are equivalent. Condition (a) is equivalent to $\z_0=\n'$ and $j(w)=j_-(w)$ for all $w\in \z_0$. In particular, 
  $j(z)=j_-(z)$ and $j(Jz)=j_-(Jz)$  for all $z\in \z_0$. 
 Recall from \eqref{eq:kerS} that  this holds if and only if $S(z)=0 $. This is equivalent to $j(Jz)=J_\v\circ j(z)$, that is,   $j:  \z_0 \to \operatorname{End}(\v)$ is complex linear.

\end{proof}


Let $(\n,J,\pint)$, $\v, \z_0$ and $\z_1$ be as in Proposition \ref{prop:main2} and assume that $J$ is integrable.  If $\n'_J\neq 0$, decompose  
 \begin{equation}\label{eq:refine}
     \n'_J= \rlp\n_J'\rrp _+\oplus  \ker \left(S|_{\n'_J} \right) \oplus \a, \qquad \a\,  \perp \left( \rlp\n_J'\rrp _+\oplus  \ker \left(S|_{\n'_J} \right) \right) ,\end{equation}  
 where   some of these subspaces could be zero. 
 In view of Lemma \ref{lem_S} and Lemma \ref{prop:J-inv} (i), we have that   
 \eqref{eq:refine} is a decomposition of $\n'_J$  into $J$-invariant  subspaces. We chose subspaces $\p_+ \subset \rlp\n_J'\rrp _+, $
$\p_-\subset\ker \left(S|_{\n'_J} \right)$ and  $\a_1\subset \a$ such that we have the following orthogonal decompositions
\[  \rlp\n_J'\rrp _+=\p_+\oplus J\p_+, \qquad 
\ker \left(S|_{\n'_J} \right)= \p_-\oplus J\p_-, \qquad \a=\a_1\oplus J\a_1 .
\] Set \begin{equation}  \label{eq:choiceb}
 \b = \p_+ \oplus\p_-\oplus \a_1, \end{equation} so that 
$\n'_J =\b\oplus J\b$.

According to Proposition \ref{prop:main2}, the linear map   $\psi: \z_1\oplus \b\to \mathfrak{so}(\v)$  defined by  
\[
\psi(z)=j(z), \;z \in \z_1,\qquad\psi(z)= S(z)_{skew}, \; z\in \b , \]
satisfies $\text{Im}\, \psi \subset \u (n)$ and $\p_-\subset \ker S$, where $S$ was  defined in \eqref{S_z} and $S(z)_{skew}$ is given by \eqref{S^a}. We also have that
\[ j(z)= S(z)_{skew}, \; z\in \p _+, \qquad j(z)=j_-(z), \;  z\in \p _- .\]
 
 Consider the $2$-step nilpotent Lie algebra $\n_0=\b\oplus \v$, where $j_0(z)=j(z)$ for all $z\in \b$. It turns out that $\n$ is determined by $\left(\n_0,\pint , \v , J_\v ,\z_1, \psi \right)$  as follows: 
\begin{equation}\label{description}\begin{split} \ker (j)&=  J\z_1, \qquad\quad \quad \quad\;\;\; j(z)=\psi(z),\; z\in \z_1,  \\ j(z)&=j_0(z),\; z\in\b, \quad\quad j(w)=-\frac12\left[ J_\v ,j_0(Jw)\right]-\psi(Jw),\; \; w\in J\b.\end{split}
\end{equation}

This motivates the following definition.
\begin{definition}\label{def:tuple}
    Let  $\left(\n_0,\pint _0 ,  J_\v ,\z_1, \psi \right)$ be a 5-tuple where: 
\begin{enumerate}
\item[(i)] $\n_0$  is a $2$-step nilpotent or abelian Lie algebra  with an inner product $\pint _0$ such that $\b=[\n_0,\n_0 ]$ has even codimension in $\n_0$,
\item[(ii)] $\v$ is the orthogonal complement of $\b$ in $\n_0$,   $\dim \v  =2n$ and  $J_\v \in \mathfrak{so}(\v)$ is an arbitrary endomorphism satisfying  $J_\v^2=-I$, 
\item[(iii)]  $\z_1$ is an arbitrary vector space, which is non-zero when $\n_0$ is abelian, and \[\psi: \z_1\oplus \mathfrak b\to \mathfrak{u}(n)\] is a linear map such that $\psi|_{\z_1\oplus \p_+}\, $ is injective, $\p_-\subset \ker\psi$ and $\pi\left( \ker \psi \right)\subset \ker \psi$, where $\pi :\z_1\oplus\b\to \b$ is the natural projection,
\item[(iv)] the map $j_0:\b\to \s\o(2n)$ defined  in \eqref{jz}, which is injective since $\b=[\n_0,\n_0 ]$,  satisfies \[ \text{Im}\, \left( j_0\right)_+ \cap \, \text{Im}\, \psi =0 ,\]
 where  $\left( j_0\right)_+ (z)= j_0(z)_+$ for $ z\in \b$  (see \eqref{eq:T+-}).
Moreover, $\b$ admits a decomposition $\b=\p_+\oplus\p_-\oplus \a_1$ such that $j_0(z)\in \u (n)$ for $z\in \p_+$, $j_0(z)=\left(j_0 (z)\right)_-$ for $z\in \p_-$ and $\a_1$ is orthogonal to $\p_+\oplus\p_-$. 
    \end{enumerate}
    We call $\left(\n_0,\pint _0,  J_\v ,\z_1, \psi \right)$ a \textit{complex 2-step data}. We say that $\left(\n_0,\pint _0,  J_\v ,\z_1, \psi \right)$ is of type $(r,p_+, p_-, a_1,n)$ when 
    \[ r= \dim \z_ 1, \qquad p_\pm =\dim \p _\pm, \qquad a_1=\dim \a _1, \qquad 2n=\dim \v .  \] 
    \end{definition} 
    
    \smallskip

We show next that each complex 2-step data  $\left(\n_0,\pint _0,  J_\v ,\z_1, \psi \right)$ of type  $(r,p_+, p_-, a_1,n)$ gives rise to a 2-step nilpotent Lie algebra $\n$ with a 2-step complex structure $J$ such that 
\begin{equation*}
    \begin{split}
        \dim\n= 2(r +p_++p_-+a_1+n),& \qquad\quad \n '= \z _1\oplus \underbrace{\b\oplus J\b}_{\n '_J} , \\
 \dim \n'_J= 2(p_++p_-+a_1), \qquad &\dim \n'=r+\dim\n'_J , \qquad b_1= 2n+r, \end{split}
\end{equation*} where $b_1$ is the first Betti number of $\n$.
    
Let $\b=[\n_0 ,\n_0]$ and fix two real vector spaces $\widetilde{\b}$ and $\widetilde{\z_1}$   such that $\dim\widetilde{\b}=\dim \b$ and $\dim\widetilde{\z_1}=\dim \z_1$.
 Set \begin{equation}\label{eq:inverse-construction}
    \n :=\widetilde{\z_1}\oplus\z_1 \oplus \widetilde{\b}\oplus \n_0 ,\end{equation} and consider an  extension $J$ of $J_\v $ to $\n$ satisfying $J^2=-I$, $J\b=\widetilde{\b}$, $J\z_1=\widetilde{\z_1}$. Extend the inner product $\pint _0$ to $\pint$ on $\n$ in such a way that $\pint$ is Hermitian with respect to $J$ and the decomposition \eqref{eq:inverse-construction} is orthogonal. Let $j_0: \b \to \s\o(\v)$ be the injective linear map corresponding to the Lie bracket on $\n_0$.   We consider the  Lie bracket on $(\n , \pint )$   determined by the linear map $j:\widetilde{\z_1}\oplus\z_1 \oplus \widetilde{\b}\oplus \b \to \frak{so}(\v) $ defined as follows:
\begin{equation}\label{eq:description}\begin{split} j(z)&=0, \;\; z\in \widetilde{\z_1} , \qquad\quad\; j(z)=\psi(z),\; z\in \z_1,  \\ j(z)&=j_0(z),\; z\in\b, \quad\quad j(w)=-\frac12\left[ J_\v ,j_0(Jw)\right]-\psi(Jw),\; \; w\in \widetilde{\b}.\end{split}
\end{equation}
It follows that $\ker j = \widetilde{\z_1}$, that is, 
\[
    \n'= \z_1 \oplus \widetilde{\b}\oplus \b, 
\qquad\qquad  \n '_J= \widetilde{\b}\oplus \b . \]
We show next that  $J$ is an integrable 2-step complex structure on $\n$. 
 \begin{proposition}\label{prop:inverse-cons}
     Let $\left(\n_0,\pint _0 ,  J_\v ,\z_1, \psi \right)$ be a complex 2-step data  and let $\n$ be as in~\eqref{eq:inverse-construction}. Consider an extension $J$ of $J_\v$ to $\n$ and  extend $\pint _0 $ to $\pint$ on $\n$ as above. Then \eqref{eq:description} defines a Lie algebra structure on $\n$ such that $J $ is integrable and 2-step. Moreover,  
     \begin{equation}\label{eq:comm-2-step}
    \n'= \z_1 \oplus \widetilde{\b}\oplus \b, 
\qquad\qquad  \n '_J= \widetilde{\b}\oplus \b , \end{equation} and $(\n ,J)$ has no complex abelian factor.
 \end{proposition}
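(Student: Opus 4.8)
The plan is to verify the three assertions of Proposition~\ref{prop:inverse-cons} in turn: that \eqref{eq:description} defines a genuine Lie algebra, that the extended $J$ is integrable, and that $J$ is $2$-step with the stated commutator data and no complex abelian factor. First I would check that $j$ takes values in $\s\o(\v)$ and that the bracket it determines via \eqref{jz} is $2$-step nilpotent with $\n'\subset\z_0:=\widetilde\b\oplus\b\oplus\z_1$; this is automatic from the construction, since any skew-symmetric-valued $j$ on a complement of an abelian ideal yields a $2$-step nilpotent Lie algebra, so the Jacobi identity holds trivially (the bracket lands in the center). I would record that $\ker j=\widetilde{\z_1}$, using that $\psi|_{\z_1\oplus\p_+}$ is injective, that $j_0$ is injective on $\b$, and that on $\widetilde\b$ the leading term $-\tfrac12[J_\v,j_0(Jw)]$ has no nontrivial kernel intersecting the others by the transversality condition $\operatorname{Im}(j_0)_+\cap\operatorname{Im}\psi=0$ from Definition~\ref{def:tuple}(iv). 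This pins down \eqref{eq:comm-2-step}.

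Next I would establish integrability. Since $J\n'\subset\z$ holds by the construction (the image of $j$ lies in $\z_0$ and $J\z_0\subset\z_0$ because $J\b=\widetilde\b$, $J\z_1=\widetilde{\z_1}$), Proposition~\ref{main1} applies: $J$ is integrable if and only if $S(z)=j(Jz)-J_\v\circ j(z)$ commutes with $J_\v$ for all $z\in\z_0$, equivalently $S(z)_{skew}\in\u(n)$ by \eqref{S^a}. By Corollary~\ref{corol_main1} it suffices to check this on a subspace $\p$ with $\z_0=\p\oplus J\p$; the natural choice is $\p=\z_1\oplus\b$ (so that $J\p=\widetilde{\z_1}\oplus\widetilde\b$). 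For $z\in\z_1$ we have $j(z)=\psi(z)\in\u(n)$ and $Jz\in\widetilde{\z_1}\subset\ker j$, so $S(z)=-J_\v\circ\psi(z)\in\u(n)$ since $\psi(z)$ commutes with $J_\v$. For $z\in\b$, the definition \eqref{eq:description} of $j$ on $\widetilde\b$ was reverse-engineered precisely so that $j(Jz)=-\tfrac12[J_\v,j_0(z)]-\psi(Jz)$ forces $S(z)_{skew}=j(Jz)-\tfrac12[J_\v,j(z)]=-\psi(Jz)$, which lies in $\u(n)$ by hypothesis on $\psi$; the symmetric part $S(z)_{sym}=-\tfrac12\{J_\v,j(z)\}$ is automatically in $\mathfrak{gl}(n,\C)$. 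Thus \eqref{char} is satisfied and Proposition~\ref{main1} yields integrability. That $J$ is $2$-step then follows immediately from Lemma~\ref{J-2-pasos}, since $J\n'\subset\z$.

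Finally I would verify that $(\n,J)$ has no complex abelian factor. Suppose $\n=\a\oplus\n_1$ with $\a$ a nonzero $J$-invariant abelian ideal and $\n_1$ a $J$-invariant ideal. The strategy is to show such $\a$ must meet the structure in a way forbidden by the injectivity and transversality conditions in Definition~\ref{def:tuple}. A $J$-invariant abelian ideal $\a$ that splits off as a direct factor must be central (by Lemma~\ref{lem:ideal-in-center}, since a factor ideal satisfies $[\a,\n_1]=0$ and $[\a,\a]=0$ gives $\a\cap\n'$ behavior controlled by the splitting), and $J$-invariance forces $\a\subset\z\cap J\z$ with $\a\cap\n'$ also $J$-invariant; the conditions that $j_0$ is injective on $\b$, that $\psi$ is injective on $\z_1\oplus\p_+$, and that $\operatorname{Im}(j_0)_+\cap\operatorname{Im}\psi=0$ should rule out any nonzero central $J$-invariant summand that brackets trivially with $\v$.

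The main obstacle I expect is this last step: making precise why the algebraic genericity conditions in Definition~\ref{def:tuple}(iii)--(iv) exactly preclude a complex abelian factor. The delicate point is that a putative factor $\a$ could sit partly in $\z_1$-type directions and partly in $\n'_J$, and one must use both the injectivity of $\psi|_{\z_1\oplus\p_+}$ (to prevent a central summand arising from the $\z_1$ or $\p_+$ part) and the transversality $\operatorname{Im}(j_0)_+\cap\operatorname{Im}\psi=0$ together with $\p_-\subset\ker\psi$ (to control the $\ker S$ and $\a_1$ pieces). Tracking how $J$-invariance interacts with the decomposition \eqref{eq:refine} and the reconstructed bracket \eqref{eq:description} is where the real work lies; the Lie algebra axioms and integrability are essentially bookkeeping built into the construction.
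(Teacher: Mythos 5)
Your handling of the first three assertions is sound and follows essentially the paper's route: the Jacobi identity is automatic because the bracket takes values in the center, the identification \eqref{eq:comm-2-step} rests on the injectivity of $j$ on $\z_1\oplus\widetilde{\b}\oplus\b$ (which the paper likewise attributes, without detail, to conditions (iii)--(iv) of Definition~\ref{def:tuple}), the $2$-step property is Lemma~\ref{J-2-pasos}, and integrability is exactly the content of Proposition~\ref{prop:main2} (the paper cites it in one line; you re-derive it via Corollary~\ref{corol_main1}, which is fine). Two bookkeeping slips in that computation: for $z\in\b$ one has $Jz\in\widetilde{\b}$ and $J(Jz)=-z$, so \eqref{eq:description} gives $j(Jz)=+\tfrac12\left[J_\v,j_0(z)\right]+\psi(z)$, not $-\tfrac12\left[J_\v,j_0(z)\right]-\psi(Jz)$; note that $\psi(Jz)$ is not even defined, since the domain of $\psi$ is $\z_1\oplus\b$. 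The correct outcome is $S(z)_{skew}=\psi(z)\in\mathfrak{u}(n)$, which is the conclusion you wanted.

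The genuine gap is the one you flag yourself, the absence of a complex abelian factor, and your proposed mechanism looks in the wrong place. If $\n=\a\oplus\n_1$ with $\a\neq 0$ a $J$-invariant abelian ideal and $\n_1$ a $J$-invariant ideal, then $[\a,\a]=0$ and $[\a,\n_1]\subset\a\cap\n_1=0$ force $\a\subset\z$; moreover $\n'\subset\n_1$ and the $J$-invariance of $\n_1$ give $\n'+J\n'\subset\n_1$, hence $\a\cap(\n'+J\n')=0$, while $\a=J\a\subset\z\cap J\z$. So a complex abelian factor can never sit in the $\z_1$-, $\p_\pm$- or $\a_1$-directions: it is forced to be a nonzero $J_\v$-invariant subspace of $\v\cap\z=\bigcap_{z\in\z_0}\ker j(z)$, and the injectivity of $\psi|_{\z_1\oplus\p_+}$ together with $\operatorname{Im}\left(j_0\right)_+\cap\operatorname{Im}\psi=0$ (conditions living entirely inside $\z_0$) do not bear on it. What actually has to be verified is that $\bigcap_{z\in\z_0}\ker j(z)$ contains no nonzero $J_\v$-invariant subspace, equivalently that $\z\cap J\z=\n'+J\n'$; this is the reduction the paper performs by invoking Lemma~\ref{lem:non-trivial-abelian} (its proof, run in the contrapositive direction). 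This step is not mere bookkeeping: if, say, $\n_0$ has a Euclidean factor on which $\operatorname{Im}\psi$ also acts trivially and which is $J_\v$-invariant, the construction does produce a complex abelian factor, so the verification genuinely uses the kernels of $j_0$ and $\psi$ on $\v$ rather than the transversality conditions you cite. Until this is carried out, your proof of the final clause is incomplete.
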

 \begin{proof}
  The integrability of $J$ follows from    Proposition \ref{prop:main2}.  
Since $ \n '\subset \z_1 \oplus \widetilde{\b}\oplus \b $, then $J\n '\subset \widetilde{\z_1} \oplus \z_1 \oplus \widetilde{\b}\oplus \b  \subset\z$, therefore,  $J\n '\subset \z$, that is, $J$ is 2-step (see Lemma \ref{J-2-pasos}).  
Note that conditions (iii) and (iv) in Definition \ref{def:tuple} and the fact that $j_0$ is injective, imply that $j: \z_1 \oplus \widetilde{\b}\oplus \b \to \s\o(\v )  $ is injective, hence \eqref{eq:comm-2-step} is satisfied. The fact that $(\n , J)$ has no  complex abelian factor follows from Lemma~\ref{lem:non-trivial-abelian}. 
\end{proof}

\begin{remark}\label{rem:Sz} We point out that the map $S: \z_0 \to  \operatorname{End}(\v)$ defined in \eqref{S_z} can be obtained from \eqref{eq:description} in terms of $j_0$ and $\psi$ as follows:\[ 
S(z)=\begin{cases} -\frac 12 \left\{ J_\v , j_0(z)\right\} +\psi (z), & \text{ if } z\in \b,\\
-\frac 12 J_\v\left\{ J_\v , j_0(Jz)\right\} +J_\v \circ \psi (Jz), & \text{ if } z\in J\b,\\
  -J_\v\circ \psi (z), & \text{ if } z\in \z_1,\\
  \psi (Jz),  & \text{ if } z\in J\z_1 ,
\end{cases} 
    \]
    where $\{A,B\} = A\circ B+B\circ A$ is the anticommutator of the operators $A$ and $B$.
\end{remark}

\smallskip

 Proposition \ref{prop:inverse-cons} gives that each complex 2-step data  $\left(\n_0,\pint _0 ,  J_\v ,\z_1, \psi \right)$ gives rise to a 2-step nilpotent Lie algebra equipped with a 2-step complex structure. Conversely, it follows from Proposition ~\ref{prop:main2}       that  all 2-step nilpotent Lie algebras with a  2-step complex structure are obtained  in this way for 
 an appropriate complex 2-step data   $\left(\n_0,\pint _0,  J_\v ,\z_1, \psi \right)$.

\begin{theorem}\label{main} 
Let $\n$ be a 2-step nilpotent Lie algebra with a 2-step complex structure~$J$ such that $(\n , J)$ has no  complex abelian factor. Then, for any   Hermitian inner product $\pint$ on $\n$, 
$(\n ,J , \pint )$ is obtained from a complex  2-step  data $\left(\n_0,\pint _0  ,  J_\v ,\z_1, \psi \right)$ as in Proposition \ref{prop:inverse-cons}
  where: 
\begin{enumerate}
    \item [$\ri$] $\v$ is the orthogonal complement of $\n'+J\n '$ in $\n$ and $J_\v=J|_\v$, 
  
\item [$\rii$] $\z_1$ is the orthogonal complement of $\n '_J$ in $\n '$ and 
  $ J\z_1 \perp \n '$, 
\item [$\riii$] $\b=\p_+\oplus\p_-\oplus \a_1 \subset \n '_J $, where 
\[\p_+\oplus J\p_+ =  \rlp\n_J'\rrp _+ , \qquad 
 \p_-\oplus J\p_-=\ker \left(S|_{\n'_J} \right), 
\]
and $\a=\a_1\oplus J\a_1$ is the orthogonal complement of $\rlp\n_J'\rrp _+\oplus\ker \left(S|_{\n'_J}\right) $ in $\n '_J$,
\item [$\riv$] $\n_0=\frak b \stackrel{\perp}{\oplus} \v$, $\pint _0= \pint |_{\n_0 \times \n_0}$ and the Lie bracket on $\n_0$ is given by $j_0(z)=j(z)$ for all $z\in \b$, hence, $\b$ is the commutator ideal of $\n_0 $, 
\item [$\rv$] $
\psi(z)=\begin{cases}
    j(z), \;& z \in \z_1,\\ S(z)_{skew}, & z\in \frak b , \;\; \text{$\rlp$see \eqref{S^a}$\rrp$}.\end{cases} $
\end{enumerate}
\end{theorem}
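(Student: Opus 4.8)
The plan is to read the five-tuple off $(\n,J,\pint)$ exactly as prescribed in (i)--(v), verify that it is a genuine complex 2-step data in the sense of Definition \ref{def:tuple}, and then invoke Proposition \ref{prop:inverse-cons} to recover $(\n,J,\pint)$; in other words, the theorem asserts that data-extraction is a one-sided inverse of the construction of Proposition \ref{prop:inverse-cons}. Since $(\n,J)$ has no complex abelian factor, Lemma \ref{lem:non-trivial-abelian} forces $\z_0=\n'+J\n'$, so I would fix this $\z_0$, set $\v=\z_0^\perp$, $J_\v=J|_\v$, choose $\z_1$ as in \eqref{z_1} to obtain the orthogonal decomposition \eqref{eq:dec-z_0}, form $S$ as in \eqref{S_z}, split $\n'_J$ into the $J$-invariant pieces \eqref{eq:refine} (their $J$-invariance is Lemma \ref{lem_S} and Lemma \ref{prop:J-inv}(i)), and finally pick $\p_+,\p_-,\a_1$ and $\b=\p_+\oplus\p_-\oplus\a_1$ as in \eqref{eq:choiceb}. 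The five-tuple is then $\n_0=\b\oplus\v$ with $j_0=j|_\b$, together with $J_\v,\z_1$ and the map $\psi$ of (v). The ambient decomposition $\n=J\z_1\oplus\z_1\oplus\b\oplus J\b\oplus\v$ matches the abstract construction \eqref{eq:inverse-construction} under the isometric identifications $\widetilde{\z_1}=J\z_1$ and $\widetilde\b=J\b$ furnished by $J$ (which is isometric because $\pint$ is Hermitian).

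The first half of the work is to check conditions (i)--(iv) of Definition \ref{def:tuple}. Condition (i) holds because $j_0=j|_\b$ is injective ($\b\subset\n'_J\subset\n'$ while $\ker j=J\z_1$ and $J\z_1\perp\n'$, whence $\b\cap\ker j=0$), so $[\n_0,\n_0]=\b$ has even codimension $\dim\v=2n$; condition (ii) is immediate. For (iii), $\operatorname{Im}\psi\subset\mathfrak{u}(n)$ is exactly the content of Proposition \ref{prop:main2}; the injectivity of $\psi|_{\z_1\oplus\p_+}$ follows from $\ker j=J\z_1$ together with $J\z_1\perp\n'$ and $\z_1\perp\n'_J$; and $\p_-\subset\ker\psi$ holds by the very choice $\p_-\subset\ker(S|_{\n'_J})$, since $\psi|_\b=S(\cdot)_{skew}$. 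The decomposition statements in (iv) ($j_0(z)\in\mathfrak{u}(n)$ on $\p_+$, $j_0(z)=(j_0(z))_-$ on $\p_-$, and $\a_1\perp\p_+\oplus\p_-$) are built into \eqref{eq:refine}--\eqref{eq:choiceb}. This leaves the two substantive conditions: the projection property $\pi(\ker\psi)\subset\ker\psi$ in (iii) and the transversality $\operatorname{Im}(j_0)_+\cap\operatorname{Im}\psi=0$ in (iv).

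The second half, the recovery, is then essentially formal: the bracket produced by Proposition \ref{prop:inverse-cons} via \eqref{eq:description} is literally the formula \eqref{description} derived from the original $j$, once one uses $J^2=-I$ to rewrite $j$ on $J\b$ and on $J\z_1$ (on $\widetilde\b=J\b$ this reproduces $j(Jz)=\tfrac12[J_\v,j(z)]+S(z)_{skew}$, i.e.\ \eqref{char}, and on $\widetilde{\z_1}=J\z_1$ it gives $\ker j=J\z_1$). Hence the reconstructed $j$ agrees with the original on all of $\z_0$, while the inner product and complex structure agree because $\pint$ is Hermitian and \eqref{eq:dec-z_0} is orthogonal. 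Thus, once the data is shown valid, Proposition \ref{prop:inverse-cons} returns $(\n,J,\pint)$ and simultaneously certifies that $(\n,J)$ has no complex abelian factor.

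The main obstacle is the transversality condition (iv), and with it the projection property in (iii). Using \eqref{char} (equivalently Remark \ref{rem:Sz}) I would first record the identity $S(z)_{skew}=j(Jz)_+$, valid for all $z\in\z_0$, where $T_+$ denotes the $\mathfrak{u}(n)$-component of $T$. This rewrites $\operatorname{Im}\psi=j_+(\z_1)+j_+(J\b)$ and $\operatorname{Im}(j_0)_+=j_+(\b)$ for the single linear map $j_+\colon\n'\to\mathfrak{u}(n)$, $z\mapsto j(z)_+$, so that (iv) becomes $j_+(\b)\cap\left(j_+(\z_1)+j_+(J\b)\right)=0$ and the projection property becomes $j_+(\z_1)\cap j_+(J\b)=0$; both are transversality statements for $j_+$ across the three summands of $\n'=\z_1\oplus\b\oplus J\b$. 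The delicate point is that $j_+$ is far from injective, so these do not follow merely from the injectivity of $j|_{\n'}$: one must exploit the maximality built into \eqref{eq:refine}, namely that by Lemma \ref{prop:J-inv}(iii) the subspace $\ker(S|_{\n'_J})$ is the largest $J$-invariant subspace of $\rlp\n_J'\rrp_-$, together with the description \eqref{eq:kerS} of $\ker S$, so that every $J$-invariant direction on which $j_+$ degenerates is accounted for by $\p_-$ and excluded from $\p_+$ and $\a_1$. I expect this transversality to be the real content of the theorem, the remaining verifications being routine.
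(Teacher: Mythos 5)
Your overall route is the one the paper intends (the paper gives no formal proof of this theorem; it derives formula \eqref{description} and then asserts the converse of Proposition \ref{prop:inverse-cons} ``follows from Proposition \ref{prop:main2}''), and your reductions are correct as far as they go: the identity $S(z)_{skew}=j_+(Jz)$ holds, and conditions (iii)--(iv) of Definition \ref{def:tuple} do reduce to the transversality statements $j_+(\b)\cap\bigl(j_+(\z_1)+j_+(J\b)\bigr)=0$ and $j(\z_1)\cap j_+(J\b)=0$. The gap is that you stop exactly there: you say you ``expect'' the maximality of $\ker\left(S|_{\n'_J}\right)$ to yield these identities, but you never produce the argument, and this is not a routine omission. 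All one knows a priori is that $j$ is injective on $\n'=\z_1\oplus\b\oplus J\b$ (because $\ker j=J\z_1\perp\n'$), and injectivity of $j$ does \emph{not} imply that the images of its $\mathfrak{u}(n)$-component $j_+$ on the three summands are in direct sum, since $j_+$ can be highly degenerate.

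Worse, for the extraction as literally prescribed in (iii) the transversality can fail. Take $\dim\v=4$, $\z_1=0$, $\n'=\n'_J=\R x\oplus\R Jx$, and set $j(x)=A+B$, $j(Jx)=J_\v A+\lambda B$ with $0\neq A\in\sigma\left(\mathfrak{gl}(2,\C)\right)\cap\s\o(4)$, $0\neq B\in\u(2)$, $\lambda\neq 0$: this satisfies \eqref{char} (so $J$ is integrable and 2-step), $j$ is injective on $\n'$, and one checks $\rlp\n'_J\rrp_+=0$ and $\ker\left(S|_{\n'_J}\right)=0$, so $\a=\n'_J$ and $\p_\pm=0$. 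Choosing $\a_1=\R x$ gives $\operatorname{Im}(j_0)_+=\R B=\operatorname{Im}\psi$, so Definition \ref{def:tuple}(iv) fails, and your plan of ``verify the Definition, then invoke Proposition \ref{prop:inverse-cons}'' cannot be completed for that choice (a different line $\a_1\subset\a$ does work here, but nothing in your write-up, nor in the theorem's statement, selects it). So the missing step is not merely unproved but false in the generality you (and the theorem) allow: you must either constrain the choice of $\a_1$ inside $\a$ so that the transversality holds, or abandon the detour through Definition \ref{def:tuple}(iv) and argue directly that \eqref{eq:description} applied to the extracted $(j_0,\psi)$ reproduces the original bracket and that $j$ is injective on $\z_1\oplus J\b\oplus\b$ because it was so to begin with. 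Everything else in your proposal (the identifications $\widetilde{\z_1}=J\z_1$, $\widetilde\b=J\b$, the verification of Definition \ref{def:tuple}(i)--(ii), $\operatorname{Im}\psi\subset\u(n)$ via Proposition \ref{prop:main2}, and the formal recovery of the bracket) is fine.
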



\smallskip

As a consequence of Theorem~\ref{main} and Corollary~\ref{abelian},  we obtain: 
\begin{corollary} Let $\n$ be a 2-step nilpotent Lie algebra with a complex structure $J$. Then 
\begin{enumerate}\item[$\ri$] $J$ is abelian if and only if  the subspaces $\p_-$ and $\a_1$ of $\n'_J$    are trivial.  
\item[$\rii$] $J$ is bi-invariant if and only if  the subspaces $\z_1$, $\p_+$ and $\a_1$ of $\n'$   are trivial.
\end{enumerate}
\end{corollary}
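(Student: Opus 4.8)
The plan is to combine the structural description from Theorem~\ref{main} with the characterizations of abelian and bi-invariant complex structures obtained in Corollary~\ref{abelian}. Since Theorem~\ref{main} identifies the subspaces $\z_1$, $\p_+$, $\p_-$ and $\a_1$ intrinsically in terms of $(\n,J,\pint)$ — namely $\z_1$ is the orthogonal complement of $\n'_J$ in $\n'$, while $\p_+\oplus J\p_+=(\n'_J)_+$, $\p_-\oplus J\p_-=\ker(S|_{\n'_J})$, and $\a=\a_1\oplus J\a_1$ is the orthogonal complement of $(\n'_J)_+\oplus\ker(S|_{\n'_J})$ inside $\n'_J$ — the vanishing of each of these pieces is equivalent to an intrinsic condition on $\n'$ and $\n'_J$. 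The task reduces to translating conditions (i) and (ii) of Corollary~\ref{abelian} into statements about which of these four subspaces vanish.

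For part $\ri$, I would recall from Corollary~\ref{abelian}~$\ri$ that $J$ is abelian if and only if $\z$ is $J$-invariant and $\n'_J=(\n'_J)_+$. First I would argue that for a 2-step complex structure the center is automatically $J$-invariant: by Lemma~\ref{J-2-pasos} we have $\n'\subset\z\cap J\z$, so $\z$ is $J$-invariant precisely when there is no obstruction coming from outside $\n'$; more directly, the condition $\n'_J=(\n'_J)_+$ forces $\ker(S|_{\n'_J})$ and $\a$ to be trivial, which by the identifications in Theorem~\ref{main}~$\riii$ means exactly $\p_-=0$ and $\a_1=0$. Conversely, if $\p_-=\a_1=0$ then $\n'_J=(\n'_J)_+$, and one still needs $\z$ to be $J$-invariant to invoke Corollary~\ref{abelian}; here the point is that $\z_0=\n'+J\n'$ is $J$-invariant by construction and $\ker j=J\z_1$, so the full center $\z=\z_0\oplus\bigcap_z\ker j(z)$ is $J$-invariant because $S(z)=0$ for $z\in\p_-\oplus J\p_-$ — so that $j$ commutes with $J_\v$ on all of $\z_0$ when $\p_-=\a_1=0$, giving the abelian condition via Lemma~\ref{rem:abelian}~$\ri$.

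For part $\rii$, I would use Corollary~\ref{abelian}~$\rii$(a): $J$ is bi-invariant if and only if $\n'=(\n'_J)_-$. The condition $\n'=\n'_J$ is equivalent to $\z_1=0$ (since $\z_1$ is the orthogonal complement of $\n'_J$ in $\n'$), and the refinement $\n'_J=(\n'_J)_-$ means that the $(\cdot)_+$-part vanishes and that $j(z)=j_-(z)$ on all of $\n'_J$. By the decomposition $\n'_J=(\n'_J)_+\oplus\ker(S|_{\n'_J})\oplus\a$ of Theorem~\ref{main}, having $j=j_-$ everywhere forces $(\n'_J)_+=0$, i.e. $\p_+=0$, and forces the $\a$-component to vanish as well, i.e. $\a_1=0$; the surviving piece $\ker(S|_{\n'_J})=\p_-\oplus J\p_-$ is exactly where $j=j_-$ holds. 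Conversely, $\z_1=\p_+=\a_1=0$ leaves $\n'=\n'_J=\ker(S|_{\n'_J})$, on which $j=j_-$ by the identification in Theorem~\ref{main}, giving bi-invariance.

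The main obstacle I anticipate is part $\ri$: one must be careful that $\n'_J=(\n'_J)_+$ alone does not literally say $\z$ is $J$-invariant, so the equivalence with ``$\p_-$ and $\a_1$ trivial'' requires checking that, under the no-complex-abelian-factor hypothesis built into Theorem~\ref{main}, the condition $S\equiv 0$ on the relevant subspaces is genuinely equivalent to $J$-invariance of the whole center together with $j(z)\in\u(n)$ for all $z\in\z_0$. The cleanest route is to note via \eqref{char} and Remark~\ref{rem:Sz} that $\p_-=0$ removes the only locus where $S$ fails to force $j\in\u(n)$, so that the two formulations of abelianness coincide; I would phrase the argument so as to avoid re-deriving the $j\in\u(n)$ characterization and instead cite Corollary~\ref{abelian}~$\ri$ directly after establishing the dictionary between the vanishing of $\p_-,\a_1$ and the conditions on $j$.
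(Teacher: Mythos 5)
Your proof is correct and follows exactly the route the paper intends: the corollary is stated there as an immediate consequence of Theorem~\ref{main} and Corollary~\ref{abelian} with no further argument given, and your dictionary between the vanishing of $\z_1,\p_+,\p_-,\a_1$ and the conditions $\n'_J=\rlp\n'_J\rrp_+$ (resp.\ $\n'=\rlp\n'_J\rrp_-$) is the intended one. Your attention to the $J$-invariance of the center in part (i) --- deduced from $j(z)\in\u(n)$ for all $z\in\z_0$, which makes $\bigcap_{z\in\z_0}\ker j(z)$ invariant under $J_\v$ --- correctly supplies the one detail the paper leaves implicit (the parenthetical ``because $S(z)=0$ for $z\in\p_-\oplus J\p_-$'' is a non sequitur when $\p_-=0$, but the argument that follows it is the right one).
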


\smallskip 

\begin{example}
\label{ex:2-pasos} For each $n\in \N$, let $\n$ be a 2-step nilpotent $(2n+2)$-dimensional Lie algebra with a complex structure  $J$. Assume that $\n '=\z$ and $\dim \z =2$. It follows from Corollary \ref{cor:n'2} that $J$ is 2-step. Therefore, 
in view of Proposition \ref{prop:inverse-cons}, any complex structure on $\n$ can be constructed  from a complex 2-step data. Note that the first Betti number  of $\n$ is $b_1=2n$

Let  $\n_0=\R x \oplus\v$, $\dim\v=2n$, be a 2-step nilpotent Lie algebra with commutator $\R x$. It follows that $\n_0 $ is isomorphic to the Heisenberg Lie algebra $\h_{2n+1}$. 
Consider an inner product $\pint_0$ on $\n_0$  such that $x$ is a unit vector orthogonal to $\v$, and fix $J_\v\in\s\o(2n)$ such that $(\v, J_\v )$ is a complex vector space.    The Lie bracket on $\n_0$ is determined by the map $j_0(x)\neq 0$. Let $\psi: \R x\to \u(n)$ be a linear map such that $\R \left(j_0 (x)\right)_+ \cap \R \psi(x) =0$. If we set $\z_1=0$, then $\left(\n_0,\pint _0,  J_\v ,\z_1, \psi \right)$ is a complex 2-step data. Now  Proposition \ref{prop:inverse-cons} applies and we obtain a Lie algebra $\n= \R y \oplus \n_0$ with a complex structure $J$, where $J(x)=y$
and the inner product $\pint$ on $\n$ is the extension of $\pint _0$ such that $y$ is a unit vector orthogonal to $\n_0$. If we define the Lie bracket on $\n$  by
\[   j(x)=j_0(x), \qquad\quad j(y)= \frac12[J_\v , j_0(x)] +\psi(x),\]
then Proposition \ref{prop:inverse-cons} implies that $J$ is a 2-step complex structure on $\n$. We point out that
\begin{itemize}
    \item if $j_0(x)\in\u(n)$, then $j(y)=\psi(x)\in \u(n)$, therefore, $J$ is abelian,
    \item if $j_0(x)=\left(j_0(x)\right)_-$,  then $J$ is bi-invariant if and only if $\psi(x)=0$,
    \item if $j_0(x)\notin \u(n)$ and $j_0(x)\neq \left(j_0(x)\right)_-$, then $J$ is neither abelian nor bi-invariant.
     \end{itemize}
These family of Lie algebras exhausts the class of 2-step nilpotent Lie algebras $\n$  satisfying  $\dim \n ' =2$, $\z=\n '$ and such that $\n$ admits a complex structure. 
    \end{example}

\

\section{Three-step complex structures}\label{sec-3-step}
In this  section we give a characterization of the $2$-step nilpotent Lie algebras $\n$ admitting a 3-step complex structure $J$. It turns out that $(\n , J)$ can be obtained from  two proper subspaces  $\n_0, \, \n_1  \subsetneq\n $ each of which can be endowed with a Lie algebra structure such  that $J|_{\n_0} $ is 2-step and $J|_{\n_1}$ is abelian. We point out that $\n_0$ and $\n_1$ are not  Lie subalgebras  of $\n$. 

 If $J$ is 3-step  then $J\n ' \not \subset \z$ (see Lemma \ref{J-2-pasos}). Assume that $(\n ,J)$ has no complex abelian factor. Recall from Corollary \ref{cor:3-pasos} that the subspace $\n '_J= \n '\cap J\n '\neq 0$ and consider the following orthogonal decompositions:  
\begin{equation}\label{eq:decomp-3-step}    
 \n'= \z_1 \stackrel{\perp}{\oplus} \n '_J, \quad \z_1 \neq 0, \qquad J\z_1 = \z_2 \stackrel{\perp}{\oplus}\u, \quad \text{with} \; \z_2\subset \z, \;\; \u \cap \z =0. \end{equation}
Let $\v$ be the orthogonal complement of $\n '+J\n'$ in $\n$, hence $\v$ is $J$-invariant.  
Note that $\u  \neq 0$ since $J\n ' \not \subset \z$. Lemma \ref{idealJz} implies that $J\n '$ is an abelian ideal of $\n$. Since $\u$ is a complementary subspace of $(J\n')\cap \z$ in $J\n'$, then    $\u$ is abelian (see Lemma \ref{ideal}) and it follows from Corollary \ref{cor:3-pasos}  that $[\u ,\v ]\subset \n'_J$. 
 We define the following linear map:
\begin{equation}\label{eq:rho} \rho: \u \to \text{Hom}\,  (\v,\n '_J), \qquad \rho (u)v=[u,v], \; u\in \u, \, v\in \v. 
\end{equation}
For $v, w\in \v$, decompose 
\[ [v,w]=[v,w]_0 +[v,w]_1, \quad \text{ where } \;  [v,w]_0 \in\n ' _J , \;\;  [v,w]_1 \in\z_1.\] 
We define  
\begin{equation}\label{eq:mu}\begin{split} & \alpha  \in \Lambda^2(\v ^*)\otimes \n'_J, \qquad \alpha(v,w)= [v,w]_0, \\
& \mu \in \Lambda^2(\v ^*)\otimes \z_1, \qquad \;\mu(v,w)= [v,w]_1 .
\end{split}\end{equation}
Since $J$ is 3-step it follows that $\mu\neq 0$.  Set 
\begin{equation*}
    \n_0:= \n '_J \oplus \v, \qquad\qquad \qquad  
    \n_1:= \z_1 \oplus J\z_1\oplus \v, \quad   
 \end{equation*} 
 \[     J_\v=J|_\v, \qquad \qquad  J_0= J|_{\n'_J}    \qquad \qquad
    J_1=J|_{\z_1\oplus J\z_1}. \]
Consider the Lie brackets on $\n_0$ and $\n_1$ induced by $\alpha$ and $\mu$, respectively. In particular, $\alpha(\n_0,\n_0)\subset \n'_J \subset \z(\n_0)$ and $\mu(\n_1,\n_1)\subset \z_1 \subset \z(\n_1)$. 

We derive next the condition $N_J\equiv 0$ in terms of properties satisfied by $ \rho, \, \alpha$ and $ \mu$. 
\begin{proposition}\label{prop:caract-3-step}  Let $J$ be a 3-step complex structure on a 2-step nilpotent Lie algebra~$\n$. With the above notation, the following conditions are satisfied:  
\begin{enumerate}
\item[$\ri$] $\u$ is abelian, $\rho$ is injective and $\rho(u) \in \text{Hom}\, _\C (\v,\n '_J)$ for all $u\in \u$, that is, $J_0\,   \rho(u)v=\rho(u)J_\v  v$ for all $  u\in \u, \, v\in \v$,
\item[$\rii$] $J_0\oplus J_\v$ is a 2-step complex structure   on $(\n_0, \alpha)$, 
\item[$\riii$] $\mu\neq 0$ and $J_1\oplus J_\v$ is an abelian complex structure on $(\n_1 , \mu )$. 
\end{enumerate}
    
\end{proposition}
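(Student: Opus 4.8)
The plan is to verify each of the three conditions by unwinding the integrability hypothesis $N_J\equiv 0$ and the structural facts established before the statement. The central tool is that $N_J(x,y)=0$ for all $x,y$, which by \eqref{nijen} reads $[x,y]-[Jx,Jy]+J[Jx,y]+J[x,Jy]=0$, together with the decomposition \eqref{eq:decomp-3-step} of $\n'$ and the fact (from Lemma~\ref{idealJz} and Corollary~\ref{cor:3-pasos}) that $J\n'$ is an abelian ideal with $[\u,\v]\subset\n'_J$.

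First I would prove $\ri$. That $\u$ is abelian and that $\rho$ has image in $\mathrm{Hom}(\v,\n'_J)$ is already recorded in the paragraph preceding the proposition, so the work is to show $\rho$ is injective and $\C$-linear. For injectivity, I would suppose $\rho(u)=0$, i.e.\ $[u,\v]=0$; since $u\in\u\subset J\n'\subset J\z$ and $\u\cap\z=0$, and since the only way $[u,\cdot]$ can vanish on all of $\n$ is $u\in\z$, I would check $[u,\n'+J\n']=0$ (automatic as $\n'+J\n'\subset\z$) to conclude $u\in\z$, forcing $u=0$. For the $\C$-linearity $J_0\rho(u)v=\rho(u)J_\v v$, I would apply $N_J(u,v)=0$ with $u\in\u$, $v\in\v$: since $Ju\in\n'\subset\z$ the brackets $[Ju,v]$ and $[Ju,Jv]$ vanish, so $N_J(u,v)=[u,v]-0+J[Ju,v]+J[u,Jv]=[u,v]+J[u,Jv]$, giving $[u,Jv]=J[u,v]$, which is exactly $\rho(u)J_\v v=J_0\rho(u)v$ once I confirm both brackets land in $\n'_J$ where $J$ acts as $J_0$.

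Next, for $\rii$ and $\riii$, the strategy is to restrict $N_J$ to pairs of elements in $\v$ and split the bracket using \eqref{eq:mu}. Evaluating $N_J(v,w)=0$ for $v,w\in\v$ gives $[v,w]-[Jv,Jw]+J[Jv,w]+J[v,Jw]=0$; since $\v$ is $J$-invariant, all four brackets lie in $\n'=\z_1\oplus\n'_J$, and I would project onto the two summands. The $\n'_J$-component yields the integrability of $J_0\oplus J_\v$ on $(\n_0,\alpha)$, and because $\alpha(\n_0,\n_0)\subset\n'_J$ with $J\n'_J=\n'_J$ (so $J\alpha(\v,\v)\subset\z(\n_0)$), this complex structure is $2$-step by Lemma~\ref{J-2-pasos}, giving $\rii$. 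The $\z_1$-component, on the other hand, must produce the \emph{abelian} condition $[Jv,Jw]=[v,w]$ for $\mu$: here the key is that $J\z_1=\z_2\oplus\u$ with $\u\cap\z=0$, so $J\z_1\not\subset\z$, and the $J$-terms $J[Jv,w]$, $J[v,Jw]$ land in $J\n'$, whose $\z_1$-shadow I must show vanishes after projection; this forces $\mu(v,w)=\mu(Jv,Jw)$, i.e.\ $J_1\oplus J_\v$ is abelian on $(\n_1,\mu)$, and $\mu\neq0$ is exactly the $3$-step hypothesis.

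The main obstacle I anticipate is the bookkeeping in the last step: showing cleanly that the $J$-corrected terms $J[Jv,w]+J[v,Jw]$ contribute nothing to the $\z_1$-component while contributing the correct amount to the $\n'_J$-component. This requires tracking how $J$ interchanges the summands $\z_1$, $\n'_J$, $J\z_1$ under the orthogonal decomposition \eqref{eq:decomp-3-step}, in particular that $J\z_1\cap\n'=0$ so that the abelian identity is isolated purely on $\z_1$. Once the two projections of $N_J|_{\v\times\v}=0$ are separated correctly, $\rii$ and $\riii$ follow directly from Lemma~\ref{J-2-pasos} and the definition of an abelian complex structure, respectively.
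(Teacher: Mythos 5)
Your proposal follows essentially the same route as the paper: condition \ri{} is extracted from $N_J(u,v)=0$ for $u\in\u$, $v\in\v$ (using $Ju\in\z_1\subset\z$ to kill two of the four terms), and \rii{}--\riii{} are obtained by splitting $N_J(v,w)=0$, $v,w\in\v$, into its components along $\n'_J$, $\z_1$ and $J\z_1$, with the $2$-step property of $J_0\oplus J_\v$ and the non-vanishing of $\mu$ handled exactly as in the paper. Your final paragraph correctly identifies the only real bookkeeping point, namely $J\z_1\cap\n'=0$, which makes the three components independent.

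The one step that is actually wrong as written is the justification of the injectivity of $\rho$: you assert that $[u,\n'+J\n']=0$ is ``automatic as $\n'+J\n'\subset\z$''. But $J\n'\not\subset\z$ is precisely the $3$-step hypothesis (Lemma \ref{J-2-pasos}, Corollary \ref{cor:3-step}), so that inclusion fails. The conclusion is still correct for a different reason: $[u,\n']=0$ because $\n'\subset\z$, and $[u,J\n']=0$ because $u\in\u\subset J\z_1\subset J\n'$ and $J\n'$ is an \emph{abelian} ideal by Lemma \ref{idealJz}. With that repair, $\rho(u)=0$ forces $u\in\z$, hence $u\in\u\cap\z=0$, which is how the paper phrases the equivalence. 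Everything else in your argument goes through.
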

\begin{proof}
 As mentioned in the paragraph preceding \eqref{eq:rho}, $\u$ is abelian. The injectivity of  $\rho$ is equivalent to the fact that  $\u\cap \z =0$. Since $\u$ is abelian,      the condition $N_J\equiv 0$ is equivalent to: 
\begin{equation}\label{eq:integ-step3} N_J(u,v)=0,\quad u\in \u, \, v\in \v, \quad  \text{ and } \quad 
 N_J(v,w)=0, \quad v,w\in \v .
\end{equation}
If $u\in \u$ and $v\in \v$, $N_J(u,v)=0$ amounts to $[u,v]=-J[u,Jv]$, which is equivalent to 
\begin{equation}\label{eq:Hom_C} J_0\,\rho(u)v=\rho(u)J_\v v, \qquad u\in \u, \, v\in \v.
\end{equation}
The above equation says that $\rho(u) \in \text{Hom}\, _\C (\v,\n '_J)$ for all $u\in \u$, that is, condition (i) in the statement is satisfied.

 The remaining conditions will be imposed by setting  
 $N_J(v,w)=0$ for $v,w\in \v$. We compute
\begin{equation}\begin{split} N_J(v,w)=&[v,w]+J([Jv,w]+[v,Jw])-[Jv,Jw]\\
&= [v,w]_0+[v,w]_1+J[Jv,w]_0 +J[Jv,w]_1 \\ 
& \; +J[v,Jw]_0 +J[v,Jw]_1-[Jv,Jw]_0-[Jv,Jw]_1.  
\end{split}
\end{equation}
Since $N_J(v,w)\in \n'_J\oplus \z_1\oplus J\z_1$, it follows  that $N_J(v,w)=0$ for $ v, w\in \v$ if and only if each component of $N_J(v,w)$ vanishes, that is,  \begin{eqnarray*} & &[v,w]_0+J[Jv,w]_0+J[v,Jw]_0-[Jv,Jw]_0=0, \\
& & [v,w]_1-[Jv,Jw]_1=0, \qquad J[Jv,w]_1 +J[v,Jw]_1=0.
\end{eqnarray*}
The above equations are equivalent to:
\begin{eqnarray}   
& &J_0\oplus J_\v \text{ is integrable on } (\n_0, \alpha ),\\ \label{eq:J-1-abeliana} & & \mu(J v,J w)=\mu(v,w), \quad v, w\in \v .
\end{eqnarray} 
Note that $J_0\oplus J_\v$ is 2-step since $J_0\, \alpha(v,w)\subset J_0\, \n_J '=\n'_J\subset \z(\n_0)$. Therefore, (ii) follows.

Equation \eqref{eq:J-1-abeliana} means that $J_1\oplus J_\v$ is abelian on $(\n_1 ,\mu )$. 
We point out that $\mu=0$ would imply that $\n '= \n '_J$, which is impossible in view of \eqref{eq:decomp-3-step}. This completes the proof of (iii).

\end{proof}

\begin{remark}
    We point out that the Lie algebra $(\n_0 ,\alpha )$ defined above can be abelian. In this case, $[\v ,\v]=\z_1$ and $[\u , \v]=\n '_J$. 
    
    On the other hand, Proposition \ref{prop:caract-3-step} (iii) gives that $(\n_1, \mu)$ is never an abelian Lie algebra.
\end{remark}

\begin{lemma} \label{lem:twisted-sum} Let $(\n_0 ,\alpha)$ and $(\g , \mu)$ be 2-step nilpotent Lie algebras where
\[ \n_0= \q\oplus \v, \qquad  \g=\z_1 \oplus \v,\]
   with $\alpha\in \Lambda^2(\v ^*)\otimes \q$ and $\mu\in \Lambda^2(\v ^*)\otimes \z_1$. Fix an arbitrary non-zero vector space $\u$ and an injective linear map $\rho: \u \to \text{Hom}\,  (\v,\q )$. Set $\n:= \u\oplus\z_1\oplus \q\oplus \v$ and define a  Lie bracket $\nu$ on $\n$ as follows:
   \[ \nu |_{\Lambda^2 (\v )}=\alpha\oplus \mu, \qquad \nu(u,v)=-\nu(v,u)=\rho(u)v, \quad u\in \u, \, v\in \v .\]
   Let $\n'=\nu(\n , \n )$ and $\z$ denote the commutator and center of $(\n , \nu )$, respectively. Then $\u\cap \z=0$. Moreover,     $\n '=\z_1\oplus \q $ if and only if the following conditions are satisfied:
   \begin{enumerate}
       \item [$\ri$] For every $z\in \z_1$ there exist $y\in \Lambda ^2(\v )$ and $u\in \u$ such that $\begin{cases}
           z=\mu(y),\\ \alpha(y)\in \text{Im}\, \rho(u),
       \end{cases}$
       \item [$\rii$] for every $x\in \q$ there exist $y\in \ker\mu \subset \Lambda ^2(\v )$ and $u\in \u$ such that $x-\alpha (y) \in \text{Im}\, \rho(u).$
   \end{enumerate}
\end{lemma}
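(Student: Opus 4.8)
The plan is to read the commutator $\n'$ off directly from the generators of $\nu$ and then to test, separately, whether the two summands $\q$ and $\z_1$ lie in $\n'$. First I would dispose of $\u\cap\z=0$: the only nontrivial brackets issuing from $\u$ are $\nu(u,v)=\rho(u)v$ with $v\in\v$, so $u\in\u$ is central exactly when $\rho(u)=0$, and injectivity of $\rho$ forces $u=0$. At the same time $\nu(\q,\cdot)=\nu(\z_1,\cdot)=0$ shows $\q\oplus\z_1\subset\z$, and since every value of $\nu$ lies in $\q\oplus\z_1$ we get $\n'\subset\q\oplus\z_1$; in particular $\n$ is at most $2$-step nilpotent and the Jacobi identity is automatic. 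Because $\n'\subset\q\oplus\z_1$, the equality $\n'=\q\oplus\z_1$ holds if and only if both $\q\subset\n'$ and $\z_1\subset\n'$, where $\q$ and $\z_1$ are identified with the summands $\q\oplus 0$ and $0\oplus\z_1$.

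Next I would describe $\n'$ explicitly. There are two families of generators: the $\v$-brackets $\nu(v,w)=\alpha(v,w)+\mu(v,w)$, with $\q$-component $\alpha(v,w)$ and $\z_1$-component $\mu(v,w)$; and the cross brackets $\nu(u,v)=\rho(u)v\in\q$. Collecting the first family into a single bivector $y\in\Lambda^2(\v)$, so that $\alpha(y)$ and $\mu(y)$ denote the induced maps on $\Lambda^2(\v)$, and writing $\operatorname{Im}\rho:=\rho(\u)\v$ for the span of the second family, a general element of $\n'$ has the form $(\alpha(y)+r,\mu(y))$ with $y\in\Lambda^2(\v)$ and $r\in\operatorname{Im}\rho$. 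The statement $(0,z)\in\n'$ then says precisely that there is $y$ with $\mu(y)=z$ and $\alpha(y)\in\operatorname{Im}\rho$ (choose $r=-\alpha(y)$), which is condition (i); the statement $(x,0)\in\n'$ forces the $\z_1$-component $\mu(y)$ to vanish, hence $y\in\ker\mu$, and then reads $x-\alpha(y)\in\operatorname{Im}\rho$, which is condition (ii). Combining these two equivalences with the reduction of the first paragraph yields the lemma.

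The genuine content is the coupling between the two components: a single bivector $y$ feeds both $\alpha(y)$ and $\mu(y)$, so the $\q$- and $\z_1$-parts cannot be prescribed independently, and the role of $\ker\mu$ in condition (ii) is exactly to annihilate the unwanted $\z_1$-contribution. The point I expect to need the most care is the passage between the span $\rho(\u)\v$ and the image $\operatorname{Im}\rho(u)$ of a single $u$: extracting a generator decomposition of $(0,z)$ or $(x,0)$ only places $\alpha(y)$, respectively $x-\alpha(y)$, in the span $\rho(\u)\v$, so to match the single-$u$ formulation of conditions (i) and (ii) one must either absorb the discrepancy by moving $y$ within $\ker\mu$ or understand $\operatorname{Im}\rho(u)$ as this span.
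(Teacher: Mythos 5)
Your proof takes essentially the same route as the paper's: both compute $\n'=(\alpha\oplus\mu)(\Lambda^2(\v))+\rho(\u)\v\subset\z_1\oplus\q$, get $\u\cap\z=0$ from injectivity of $\rho$, reduce the desired equality to the two inclusions $\z_1\subset\n'$ and $\q\subset\n'$, and read off conditions (i) and (ii) componentwise, with the $\ker\mu$ constraint arising exactly as you describe. The caveat you raise at the end --- that a generator decomposition a priori only places $\alpha(y)$ (resp.\ $x-\alpha(y)$) in the span $\rho(\u)\v$ rather than in $\operatorname{Im}\rho(u)$ for a single $u\in\u$ --- is a genuine subtlety, but the paper's own proof elides it in the same way, writing a general element of $\n'$ as $\alpha(y)+\mu(y)+\rho(u)v$ with a single $u$ and $v$; so on this point you are, if anything, more careful than the source, and you are not missing any argument it supplies.
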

\begin{proof} The fact that $\u\cap \z=0$ follows from the injectivity of $\rho$. 

 The commutator ideal $\n'=\nu(\n , \n )$ is given by
\[ \n'= (\alpha \oplus \mu)(\Lambda ^2(\v )) + \rho(\u)\v \subset \z_1\oplus \q \subset \z. \]
Assume that $\n'=\z_1\oplus \q $, then $\z_1\subset \n '$ and $\q\subset \n'$. Since $\z_1\subset \n '$, for each  $z\in \z_1$ there exist $y\in \Lambda ^2(\v )$, $u\in \u$, $v\in \v$ such that $z=\alpha(y)+ \mu(y)+\rho(u)v$. Therefore, $z=\mu(y)$ and $\alpha(y)+ \rho(u)v=0$, that is, $\alpha(y)=- \rho(u)v \in \text{Im}\, \rho(u)$, and (i) is satisfied. On the other hand, since $\q\subset \n'$,  it follows  that for every $x\in \q$ there exist $y\in \Lambda ^2(\v )$, $u\in \u$, $v\in \v$ such that $x=\alpha(y)+ \mu(y)+\rho(u)v$. Therefore, $\mu(y)=0$ and $x=\alpha(y)+\rho(u)v$, that is, $y\in \ker\mu$ and  $x-\alpha (y) = \rho(u)v \in \text{Im}\, \rho(u),$ hence, (ii) is satisfied.

Conversely, it is easily checked that conditions  (i) and (ii) imply that $\z_1\subset \n '$ and  $\q\subset \n '$, respectively. Therefore, $\z_1\oplus\q\subset \n '$ and the lemma follows.
    
\end{proof}

\ 

Given three  even dimensional vector spaces  $\v$, $\q$ and $\h$, fix $J_\v\in \text{End}\,(\v )$, $J_0\in \text{End}\,(\q )$ and $J_1\in \text{End}\,(\h )$ such that $J_\v^2=-I,\; J_0^2=-I, \; J_1^2=-I$. Consider a decomposition
\[ \h= J_1\z_1 \oplus \z_1, \quad \text{ with } J_1\z_1 =\z_2\oplus \u, \;\; \u \neq 0.\]
Set 
\[ \n_0= \q\oplus \v, \qquad  \n_1=\h\oplus \v,\]
and let $\alpha\in \Lambda^2(\v ^*)\otimes \q$ and $\mu\in \Lambda^2(\v ^*)\otimes \z_1$. We obtain two 2-step nilpotent Lie algebras $(\n_0,\alpha)$ and $(\n_1,\mu)$. Fix a linear map $\rho: \u\to \text{Hom}\,(\v , \q )$ and  set 
\[\n := \h\oplus\q\oplus\v, \qquad\qquad J:=J_1\oplus J_0\oplus J_\v.\]
Define a Lie bracket $\nu$ on $\n$ as follows:
\begin{equation}\label{eq:nu}
    \nu |_{\Lambda^2 (\v )}=\alpha\oplus \mu, \qquad \nu(u,v)=-\nu(v,u)=\rho(u)v, \quad u\in \u, \, v\in \v .\end{equation}
The next theorem gives necessary and sufficient conditions  for $J$ to be a 3-step complex structure on $(\n , \nu )$. 
\begin{theorem} \label{teo:main-3-step}
With the above notation, $J$ is  a 3-step complex structure on $(\n , \nu )$ if and only if the following conditions are satisfied:
\begin{enumerate}
    \item[$\ri$] $\u$ is abelian, $\rho$ is injective and $\rho(u) \in \text{Hom}\, _\C (\v,\q)$ for all $u\in \u$, that is, $J_0\,   \rho(u)v=\rho(u)J_\v  v$ for all $  u\in \u, \, v\in \v$,
\item[$\rii$] $J_0\oplus J_\v$ is a 2-step complex structure   on $(\n_0, \alpha)$, 
\item[$\riii$] $\mu\neq 0$ and $J_1\oplus J_\v$ is an abelian complex structure on $(\n_1 , \mu )$. 
\end{enumerate}
    Moreover, $\n '=\z_1\oplus \q$ and $\n'_J=\q$ if and only if conditions $\ri$ and $\rii$ of  Lemma \ref{lem:twisted-sum} are satisfied. 
\end{theorem}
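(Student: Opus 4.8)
The plan is to read this statement as the converse of Proposition~\ref{prop:caract-3-step}: starting from the abstract data $(\v,\q,\h,J_\v,J_0,J_1,\alpha,\mu,\rho)$ one builds $(\n,\nu,J)$ and must recover exactly conditions \ri--\riii together with the step count, and then identify when $\n'=\z_1\oplus\q$ and $\n'_J=\q$. First I would check that $(\n,\nu)$ really is a $2$-step nilpotent Lie algebra: $\nu$ is skew by definition, and its image lies in $\q\oplus\z_1$. Since the only nonzero brackets involve $\u$ and $\v$, every element of $\q\oplus\z_1\oplus\z_2$ is central, so $\n'\subset\z$ and the Jacobi identity holds automatically. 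The endomorphism $J=J_1\oplus J_0\oplus J_\v$ satisfies $J^2=-I$, preserves $\q$ and $\v$, and interchanges $\z_1$ with $J_1\z_1=\z_2\oplus\u$.

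The core is the equivalence $N_J\equiv0\iff$ \ri--\riii, which I would obtain by the same block computation as in Proposition~\ref{prop:caract-3-step}, now run in reverse. Because $\q\oplus\z_1\oplus\z_2$ is central and $J$-stable up to the swap $\z_1\leftrightarrow\z_2\oplus\u$, the only Nijenhuis components that can fail to vanish come from the pairs $\u\times\u$, $\u\times\v$ and $\v\times\v$. The $\u\times\u$ block vanishes iff $\u$ is abelian; the $\u\times\v$ block reduces to $J_0\,\rho(u)=\rho(u)\,J_\v$, i.e.\ $\rho(u)\in\operatorname{Hom}_{\C}(\v,\q)$, which is \ri. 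For $\v\times\v$ I would split $N_J(v,w)$ along $\n=\q\oplus\z_1\oplus J_1\z_1\oplus\v$: the $\q$-component is precisely the Nijenhuis tensor of $J_0\oplus J_\v$ on $(\n_0,\alpha)$, so its vanishing is integrability there, and since $\alpha(\v,\v)\subset\q\subset\z(\n_0)\cap J_0\z(\n_0)$ the resulting complex structure is $2$-step by Lemma~\ref{J-2-pasos}, giving \rii; the $\z_1\oplus J_1\z_1$-component is the Nijenhuis tensor of $J_1\oplus J_\v$ on $(\n_1,\mu)$, and since an abelian complex structure is automatically integrable, its vanishing is equivalent to $J_1\oplus J_\v$ being abelian on $(\n_1,\mu)$, giving \riii. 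Injectivity of $\rho$ is equivalent to $\u\cap\z=0$, which closes the equivalence at the level of the defining data.

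The delicate point, and the step I expect to be the main obstacle, is pinning down that $J$ is genuinely $3$-step rather than $2$-step. Once $N_J\equiv0$, Theorem~\ref{2or3} leaves only $t=2$ or $t=3$, and by Lemma~\ref{J-2-pasos} it is $3$-step iff $J\n'\not\subset\z$. Here I would compute $J\n'$: the summands $J\q$ and $J(\rho(\u)\v)$ land in the central $\q$, while $J(\operatorname{Im}\mu)\subset J_1\z_1=\z_2\oplus\u$, whose only possibly non-central part lies in $\u$ (as $\z_2\subset\z$ but $\u\cap\z=0$). Thus $J\n'\not\subset\z$ exactly when the non-central summand $\u$ is reached by $J\n'$, equivalently when $\z_1\subset\n'$; this is where the hypotheses $\mu\neq0$ and $\u\neq0$ must be combined with the interplay between $\mu$, $\rho$ and the splitting $J_1\z_1=\z_2\oplus\u$, and it is precisely the content that the accompanying conditions of Lemma~\ref{lem:twisted-sum} make explicit.

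Finally, for the ``moreover'' clause I would invoke Lemma~\ref{lem:twisted-sum} directly: its bracket coincides with $\nu$ (the extra central summand $\z_2$ does not affect $\n'$), so it yields $\n'=\z_1\oplus\q$ if and only if conditions \ri and \rii of that lemma hold. It then remains to see that $\n'=\z_1\oplus\q$ forces $\n'_J=\q$: since $J\n'=(\z_2\oplus\u)\oplus\q$ and $\z_1\cap(\z_2\oplus\u)=0$, intersecting with $\n'=\z_1\oplus\q$ leaves exactly $\q$, so $\n'_J=\n'\cap J\n'=\q$. As $\n'_J=\q$ is automatic once $\n'=\z_1\oplus\q$, the conjunction in the statement reduces to the single equality $\n'=\z_1\oplus\q$, and the equivalence with Lemma~\ref{lem:twisted-sum}~\ri--\rii follows.
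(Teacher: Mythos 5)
The paper states Theorem~\ref{teo:main-3-step} without an explicit proof; it is meant to be the converse of Proposition~\ref{prop:caract-3-step} combined with Lemma~\ref{lem:twisted-sum}, which is exactly the route you take. Your handling of integrability is correct: the only nontrivial Nijenhuis blocks are $\u\times\v$ (and, via $N_J(Jx,y)=-JN_J(x,y)$, $\z_1\times\v$), which give $\rho(u)\in\operatorname{Hom}_{\C}(\v,\q)$, and $\v\times\v$, whose $\q$-component is the Nijenhuis tensor of $J_0\oplus J_\v$ on $(\n_0,\alpha)$ and whose $\h$-components reduce to $\mu(J_\v v,J_\v w)=\mu(v,w)$. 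The ``moreover'' clause and the observation that $\n'=\z_1\oplus\q$ forces $\n'_J=\q$ are also fine.

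The gap is in the step from ``(i)--(iii) hold'' to ``$J$ is 3-step.'' You correctly reduce this, via Theorem~\ref{2or3} and Lemma~\ref{J-2-pasos}, to $J\n'\not\subset\z$, and correctly locate the only possibly non-central part of $J\n'$ in the $\u$-component of $J_1(\operatorname{Im}\mu)$ (using $\z_2\subset\z$ and $\u\cap\z=\ker\rho=0$). But your claimed equivalence of this with ``$\z_1\subset\n'$'' is false: $\z_1\subset\n'$ means $\operatorname{Im}\mu=\z_1$, which is sufficient but not necessary for the $\u$-projection of $J_1(\operatorname{Im}\mu)$ to be nonzero. More importantly, you then defer the whole issue to ``the interplay that Lemma~\ref{lem:twisted-sum} makes explicit,'' whereas the conditions of that lemma enter only the \emph{moreover} clause of the theorem, not the main equivalence — so the argument is not closed. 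The point you are circling is a real one: if $\mu\neq 0$ but $J_1(\operatorname{Im}\mu)\subset\z_2$ (possible when $\operatorname{Im}\mu\subsetneq\z_1$ and $\z_2\neq 0$), then $J\n'\subset\q\oplus\z_2\subset\z$ and the integrable $J$ produced by (i)--(iii) is $2$-step, not $3$-step. A complete proof must either add the hypothesis $J_1(\operatorname{Im}\mu)\not\subset\z_2$ (automatic when $\operatorname{Im}\mu=\z_1$, i.e.\ under the conditions of Lemma~\ref{lem:twisted-sum}), or declare the splitting $J_1\z_1=\z_2\oplus\u$ to be adapted to $\mu$; your proposal names the difficulty but does not resolve it.
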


\begin{corollary} Any 3-step complex structure $J$ on a 2-step nilpotent Lie algebra $\n$ is obtained as in Theorem \ref{teo:main-3-step} from  $(\n_0 , \alpha)$ and $(\n _1 , \mu )$, where $\q =\n'_J$ and $\n'= \z_1\oplus\q$. 
    
\end{corollary}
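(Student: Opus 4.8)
The plan is to invert the construction of Theorem \ref{teo:main-3-step}: beginning from $(\n,J)$, I extract the data $\left(\v,\q,\h,J_\v,J_0,J_1,\alpha,\mu,\rho\right)$ set up in the paragraphs preceding Proposition \ref{prop:caract-3-step} and check that it reproduces the original Lie algebra. After splitting off any complex abelian factor (a $J$-invariant central abelian summand), I may assume $(\n,J)$ has no complex abelian factor, so that Proposition \ref{prop:caract-3-step} applies. Fixing a Hermitian inner product, I form the orthogonal decompositions of \eqref{eq:decomp-3-step}: $\n'=\z_1\stackrel{\perp}{\oplus}\n'_J$ with $J\z_1\perp\n'$, and $J\z_1=\z_2\stackrel{\perp}{\oplus}\u$ with $\z_2\subset\z$ and $\u\cap\z=0$; here $\n'_J\neq0$ by Corollary \ref{cor:3-pasos} and $\u\neq0$ since $J\n'\not\subset\z$ (Lemma \ref{J-2-pasos}). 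Taking $\v$ to be the orthogonal complement of $\n'+J\n'$, and setting $\q=\n'_J$, $\h=\z_1\oplus J\z_1$, with $J_\v,J_0,J_1$ the restrictions of $J$ and $\alpha,\mu,\rho$ as in \eqref{eq:mu} and \eqref{eq:rho}, I obtain the two auxiliary Lie algebras $(\n_0,\alpha)$ with $\n_0=\q\oplus\v$ and $(\n_1,\mu)$ with $\n_1=\h\oplus\v$, which are exactly the ingredients demanded by Theorem \ref{teo:main-3-step}.

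Conditions $\ri$, $\rii$, $\riii$ of Theorem \ref{teo:main-3-step} then require no fresh argument, since they are precisely the three conclusions of Proposition \ref{prop:caract-3-step}. The one substantive step is to confirm that the bracket $\nu$ of \eqref{eq:nu} coincides with the original bracket of $\n$. Here I would use that, under $\n=\h\oplus\q\oplus\v$, the endomorphism $J$ splits as $J_1\oplus J_0\oplus J_\v$ because $\n'_J$, $\z_1\oplus J\z_1$ and $\v$ are each $J$-invariant. Since $\n$ is 2-step nilpotent, $\n'\subset\z$, so $\z_1,\n'_J,\z_2\subset\z$ and the unique non-central direction inside $\n'+J\n'$ is $\u$; thus every nonzero bracket is either $[\v,\v]$ or $[\u,\v]$, while $[\u,\u]=0$ because $\u$ is abelian (Lemma \ref{ideal} applied to the ideal $J\n'$ of Lemma \ref{idealJz}). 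By 2-step nilpotency $[\v,\v]\subset\n'=\z_1\oplus\n'_J$, whose $\z_1$- and $\n'_J$-components are exactly $\mu$ and $\alpha$, and $[\u,\v]\subset\n'_J$ is exactly $\rho$. Hence the original bracket equals $\nu$, so $(\n,J)$ is produced by Theorem \ref{teo:main-3-step} with $\q=\n'_J$ and $\n'=\z_1\oplus\q$; since these identities hold by construction, the final equivalence of Theorem \ref{teo:main-3-step} forces conditions $\ri$ and $\rii$ of Lemma \ref{lem:twisted-sum} to hold as well.

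I expect the bracket-matching step to be the main obstacle. The delicate points are to verify that $\u$ is indeed the only non-central subspace beyond $\v$ — so that no bracket escapes the pairs $[\v,\v]$ and $[\u,\v]$ — and that the containment $[\u,\v]\subset\n'_J$ guarantees $\rho$ takes values in $\q=\n'_J$, leaving every bracket faithfully represented by the triple $(\alpha,\mu,\rho)$.
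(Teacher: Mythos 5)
Your proposal is correct and follows essentially the same route the paper intends: the corollary is a direct unwinding of the decompositions \eqref{eq:decomp-3-step}--\eqref{eq:mu} together with Proposition \ref{prop:caract-3-step}, whose conclusions are verbatim the hypotheses of Theorem \ref{teo:main-3-step}, and your bracket-matching check (that $[\u,\u]=0$, $[\u,\v]\subset\n'_J$ and $[\v,\v]\subset\z_1\oplus\n'_J$ account for all brackets) is exactly the content of the paragraph preceding that proposition. The only loose end is that after reducing to the case of no complex abelian factor you do not say how to reabsorb the discarded summand, but this is immediate (enlarge $\v$ by the $J$-invariant central summand and extend $\alpha,\mu,\rho$ by zero), and in fact the reduction is not strictly needed since Proposition \ref{prop:caract-3-step} does not use that hypothesis in its proof.
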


\begin{example}\label{ex:3-step}
     For each $n\geq 3$, we construct next the  class  of all $2n$-dimensional 2-step nilpotent Lie algebras $\n$ admitting a complex structure $J$ such that  $\n '=\z$ and $\dim \z =3$. It follows from Corollary \ref{cor:odd}  that these Lie algebras only admit 3-step complex structures.

     Let $\v$ be a $(2n-4)$-dimensional vector space and consider the endomorphism $J_\v$ which is represented by the following matrix in the standard basis $\{e_1,\ldots ,e_{2n-4}\}$ of $\v$: \[
     J_\v=\begin{pmatrix} 0_{n-2}&-I_{n-2} \\
     I_{n-2} & 0_{n-2}
         \end{pmatrix},\]
where $I_{n-2}$ and $0_{n-2}$ are the identity and zero $({n-2})\times ({n-2})$ matrices, respectively. Set $\n_0=\q\oplus \v$, where $\q=\text{span}\, \{f_1,f_2\}$, and $\n _1=\R y\oplus\R x\oplus \v $. Fix a Lie bracket $\al$ on $\n _0$ from the family constructed in Example~\ref{ex:2-pasos} and let $J_0f_1=f_2$. Consider the following Lie bracket $\mu$ on $\n_1$:
\[ \mu(e_{k}, e_{k+n-2})=x , \qquad k=1, \ldots , n-2 ,
\]
and define $J_1x=y$. Consider $\rho(y):\v\to \q$ such that the matrix of $\rho(y)$ is given as follows with respect to the bases $\{e_1,\ldots ,e_{2n-4}\}$ and $\{f_1 , f_2\}$:
\[ \rho(y)=\begin{pmatrix} a_1& a_2& \cdots & a_{n-2}&-b_1& -b_2& \cdots & -b_{n-2}\\
b_1&b_2&  \cdots & b_{n-2} &     a_1&a_2&\cdots & a_{n-2}
\end{pmatrix}, \qquad a_i, \, b_i\in \R, 
\]
which satisfies $\rho(y) J_\v= \begin{pmatrix}
    0&-1\\ 1&0
\end{pmatrix} \rho(y)$. We assume that some of the scalars $a_i$ or $b_i$ is no-zero, so that rank$\, \rho(y)=2$. Note that $\n_1$ is isomorphic to $\R y\oplus \h_{2n-3}$ and  $J_1\oplus J_\v $ is an abelian complex structure on $\n_1$. In this case, $\n_1$ has $\left[ \frac n2\right]$ complex structures up to equivalence (see \cite[Proposition 2.2]{ABD}). 
Set $\n= \q\oplus\n_1$, $J=J_0\oplus J_1\oplus J_\v$ and let $\nu$ be the Lie bracket on $\n $ as defined in \eqref{eq:nu}. In this case, we have that $\u=\R y$ and $\z_1 =\R  x$. Since $\rho, \, (\n_0, \al, J_0\oplus J_\v )$ and $  (\n_1, \mu, J_1\oplus J_\v )$ satisfy conditions (i), (ii) and (iii) of Theorem \ref{teo:main-3-step}, we have that $J=J_0\oplus J_1\oplus J_\v$ is a complex structure on $(\n, \nu )$, which is 3-step since $\mu\neq 0$. Since rank$\, \rho(y)=2$, it follows that $\n '=\q\oplus \R x$ and $\n '_J=\q$.

We point out that for $n=3$, the 6-dimensional Lie algebra constructed in this way is isomorphic to the free 2-step nilpotent Lie algebra $\f_3$ of rank $3$.  
\end{example}

\ 

\section{Pluriclosed metrics}

{We recall from \cite{AN} the following important  results.}


\begin{lemma}\cite[Lemma 3.4]{AN}\label{lem:SKT-center-J-inv}
     Let $\pint$ be a pluriclosed metric on $(\n, J )$, where  $\n$ is 2-step nilpotent. Then,  the center $\z$ of $\n $ is given by:
     \[ \z= \{y\in \n : [y,Jy]=0\}.\]
\end{lemma}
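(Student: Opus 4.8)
The inclusion $\z\subseteq\{y:[y,Jy]=0\}$ is immediate, since $y\in\z$ forces $[y,Jy]=0$. The whole content is the reverse inclusion, and the plan is to extract it from the single hypothesis $dc=0$ by evaluating $dc$ on the very particular $4$-tuples $(a,Ja,x,Jx)$. Writing $dc$ in Chevalley--Eilenberg form, $dc(X_0,X_1,X_2,X_3)=\sum_{i<j}(-1)^{i+j}c([X_i,X_j],\dots)$ (equivalently \eqref{eq:dc}), and substituting the torsion formula for $c$, the ``diagonal'' pieces collapse into the manifestly nonpositive quantity $-\big(|[a,x]|^2+|[a,Jx]|^2+|[Ja,x]|^2+|[Ja,Jx]|^2\big)$, whereas every remaining (``cross'') piece has the form $\langle[J\zeta,\,\cdot\,],\,\cdot\,\rangle$ with $\zeta$ one of the brackets $[a,x],[a,Jx],[Ja,x],[Ja,Jx],[x,Jx]$, all of which lie in $\n'$. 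If moreover $[a,Ja]=0$, the term coming from the pair $(0,1)$ drops out, so only these two families survive, and the proof reduces to showing that the cross terms vanish.

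First I would establish that $\z$ is $J$-invariant. Fix $z\in\z$ and an arbitrary $x$, and use $dc(z,Jz,x,Jx)=0$. Here $[z,x]=[z,Jx]=0$, so the diagonal part is just $-(|[Jz,x]|^2+|[Jz,Jx]|^2)$. For the cross terms the key input is integrability: applying $N_J=0$ to the pairs $(z,x)$ and $(Jz,x)$, and using $\n'\subseteq\z$ to discard the double brackets $[[\,\cdot\,,\,\cdot\,],\,\cdot\,]$, one obtains the operator identities $\operatorname{ad}_{J[z,x]}=\operatorname{ad}_{J[Jz,Jx]}$ and $\operatorname{ad}_{J[Jz,x]}=-\operatorname{ad}_{J[z,Jx]}$. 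Since $[z,x]=[z,Jx]=0$, these give $\operatorname{ad}_{J[Jz,x]}=\operatorname{ad}_{J[Jz,Jx]}=0$; combined with the complex-linearity of $\operatorname{ad}_{J\zeta}$ for central $\zeta$ (again a consequence of $N_J=0$) and with $z\in\z$, all cross terms vanish. Hence $|[Jz,x]|^2+|[Jz,Jx]|^2=0$ for every $x$, so $Jz\in\z$.

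With $\z$ now known to be $J$-invariant, the conclusion is short. Given $y$ with $[y,Jy]=0$, evaluate $dc(y,Jy,x,Jx)=0$. Every cross term is $\langle[J\zeta,\,\cdot\,],\,\cdot\,\rangle$ with $\zeta\in\n'\subseteq\z$; by the previous step $J\zeta\in\z$, so $\operatorname{ad}_{J\zeta}=0$ and the term is zero, while the $(0,1)$ contribution is absent because $[y,Jy]=0$. Therefore $dc(y,Jy,x,Jx)=-\big(|[y,x]|^2+|[y,Jx]|^2+|[Jy,x]|^2+|[Jy,Jx]|^2\big)=0$, which forces $[y,x]=0$ for all $x$, that is $y\in\z$.

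The main obstacle is the organization of the cross terms: a priori there are ten of them, quadratic in the data, and they do not vanish term by term for a general Hermitian $2$-step algebra. What makes the argument work is the recognition that they are all governed by operators $\operatorname{ad}_{J\zeta}$ with $\zeta\in\n'$, together with the correct sequencing --- one must first prove $J$-invariance of the center (where the brackets $[z,\,\cdot\,]$ already vanish and the two $\operatorname{ad}$-identities finish the job) and only then run the general evaluation, for which $J$-invariance trivializes every cross term. Verifying the two $\operatorname{ad}$-identities and the diagonal collapse are the only genuine computations, and both follow directly from $N_J=0$ and $\n'\subseteq\z$.
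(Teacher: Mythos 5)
Your argument is correct, but note that the paper does not actually prove this statement: it is quoted verbatim from \cite[Lemma 3.4]{AN}, so there is no internal proof to compare against. What you have supplied is a self-contained derivation, and the two computations it rests on both check out. First, for $z\in\z$ the Nijenhuis condition together with $\n'\subseteq\z$ gives the vector identities $J[Jz,x]=[Jz,Jx]$ and $J[Jz,Jx]=-[Jz,x]$, so these elements are central and $\operatorname{ad}_{J[Jz,x]}=\operatorname{ad}_{J[Jz,Jx]}=0$; together with the relation $[J\zeta,Jz]=J[J\zeta,z]=0$ for $\zeta,z\in\z$ (your ``complex linearity''), this kills every cross term in $dc(z,Jz,x,Jx)$ and leaves $-\lvert[Jz,x]\rvert^2-\lvert[Jz,Jx]\rvert^2=0$, proving $J\z\subseteq\z$. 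This intermediate step is itself a known result (\cite[Proposition 3.1]{EFV}, mentioned in the remark following Theorem \ref{thm:SKT-2-step}), which your argument recovers in the $2$-step case. Second, once $\z$ is $J$-invariant, every cross term in $dc(y,Jy,x,Jx)$ is controlled by some $\operatorname{ad}_{J\zeta}$ with $\zeta\in\n'\subseteq\z$, hence vanishes, and the hypothesis $[y,Jy]=0$ removes the one remaining non-sign-definite contribution $2\la[y,Jy],[x,Jx]\ra$ (which, to be precise, arises from the two index pairs $(0,1)$ and $(2,3)$, not just one, though both carry the factor $[y,Jy]$); what survives is the negative sum of squares, forcing $y\in\z$. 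The ordering of the two steps is essential and you have it right: the general evaluation only closes up after $J$-invariance of the center is in hand. The only cosmetic issue is that your ``operator identities'' $\operatorname{ad}_{J[z,x]}=\operatorname{ad}_{J[Jz,Jx]}$ and $\operatorname{ad}_{J[Jz,x]}=-\operatorname{ad}_{J[z,Jx]}$ are used only in the degenerate form where the left-hand arguments vanish because $z$ is central, so stating the pointwise identities $J[Jz,x]=[Jz,Jx]$, $J[Jz,Jx]=-[Jz,x]$ would be more economical.
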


\begin{theorem}\cite[Theorem 1.2]{AN}\label{thm:SKT-2-step}
   Let $\n$ be a nilpotent  Lie algebra and let $J$ be a complex structure on $\n$. If $(\n ,J)$ admits a pluriclosed metric,  then $\n$ is 2-step nilpotent or abelian.
\end{theorem}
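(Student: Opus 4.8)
The plan is to work directly from the characterization that $\pint$ is pluriclosed precisely when the torsion $3$-form $c$ is closed, that is $dc=0$, with $c$ and $dc$ given by the explicit formula \eqref{eq:dc}. The goal is to exclude the possibility that $\n$ is at least $3$-step, so I argue by contraposition: assuming $\n$ is neither abelian nor $2$-step, I would exhibit arguments on which $dc$ fails to vanish. First I would record the lower central series $\n=C^1\supset C^2=\n'\supset C^3=[\n,\n']\supset\cdots\supset C^s\neq 0$, $C^{s+1}=0$, and note that $\n$ fails to be $2$-step exactly when $C^3\neq 0$ (so $s\geq 3$), while $\n'\neq 0$ records non-abelianness. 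The key structural facts to exploit are that the deepest term $C^s$ is central, $C^s\subset\z$, and that $[C^{s-1},C^{s-1}]\subset C^{2s-2}=0$ for $s\geq 3$; both will let me kill many of the eighteen terms appearing when \eqref{eq:dc} is evaluated on vectors drawn from the deep layers.

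The engine I would build is a Bochner/trace identity obtained by contracting $dc=0$ against a $J$-adapted orthonormal frame. Fixing an orthonormal basis $\{e_i\}$ and forming $\sum_i dc(e_i,Je_i,y,z)$, I expect the formula \eqref{eq:dc} to collapse substantially: the pairing $w=e_i$, $u=Je_i$ turns the inner double brackets $[J[w,u],\,\cdot\,]$ into expressions built from the single fixed vector $\xi:=\sum_i[e_i,Je_i]$, while the remaining terms reorganize, after applying the integrability identity $N_J\equiv 0$ and the skew-symmetry of $J$, into contractions of the $j$-type operators of \eqref{jz}. Contracting a second time in the pair $(y,z)=(e_j,Je_j)$ and summing should produce an identity of the schematic shape $0=\sum(\text{squared norms of iterated brackets})$ in which the depth-$\geq 3$ brackets enter with a definite sign. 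Choosing the free arguments to lie in $C^{s-1}$ and in a complement $\v$ of $\n'+J\n'$ would then isolate the deepest layer, since brackets that increase depth past $s$ vanish and $[C^{s-1},C^{s-1}]=0$ removes the cross terms.

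Granting such a signed identity, $dc=0$ forces the deepest iterated brackets to vanish, contradicting $C^s\neq0$ for $s\geq 3$; reversing the contraposition yields that $\n$ is abelian or $2$-step. The main obstacle is exactly the collapse step: the raw expansion \eqref{eq:dc} carries eighteen mixed-sign terms, and it is not a priori clear that the double $J$-trace assembles into a quantity of controlled sign. The delicate point is to use $N_J(x,y)=0$ to pair each non-definite cross term with its partner so that only manifestly signed contributions survive, and to track the signs through the substitutions $u=Je_i$, $z=Je_j$ without error; this sign bookkeeping is where the real work lies. Should the real-frame computation prove unwieldy, an alternative is to pass to a unitary $(1,0)$-coframe adapted to the ascending series $\a_\ell(J)$ of \eqref{eq:a_l} and rewrite the closedness of $c$ as quadratic relations among the structure constants, reading off the vanishing of the top-layer constants; there the same difficulty resurfaces as controlling the mixed $(2,0)$ and $(1,1)$ components of the structure equations.
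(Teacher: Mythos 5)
First, a point of reference: the paper does not prove this statement at all --- it is quoted verbatim from \cite[Theorem 1.2]{AN} --- so there is no internal proof to compare against, and the only question is whether your argument stands on its own. It does not: what you have written is a plan whose decisive step is explicitly deferred, and that step is the theorem.

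Concretely, everything hinges on the claimed ``signed identity of the schematic shape $0=\sum(\text{squared norms})$'' obtained by double-tracing \eqref{eq:dc} over a $J$-adapted frame, and you acknowledge that you have not verified it. If one actually carries out the contraction $\sum_{i,j} dc(e_i,Je_i,e_j,Je_j)$, the six terms of \eqref{eq:dc} that do not involve iterated brackets contribute
\[
2\Bigl\|\sum_i [e_i,Je_i]\Bigr\|^2 \;-\; \sum_{i,j}\bigl(\|[e_i,e_j]\|^2+2\|[e_i,Je_j]\|^2+\|[Je_i,Je_j]\|^2\bigr),
\]
which is manifestly \emph{not} of one sign, and the twelve iterated-bracket terms do not all disappear via $\tr(\ad_x)=0$: sums such as $\sum_{i,j}\la [J[e_i,e_j],Je_i],Je_j\ra$ survive and are genuinely mixed, and they are exactly where the depth-$\geq 3$ information sits. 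So the ``collapse'' you are counting on does not occur, and restricting the free arguments to $C^{s-1}$ and a complement of $\n'+J\n'$ does not by inspection turn the remainder into a definite quadratic form. This is precisely why the published proof in \cite{AN} is not a one-page trace computation: it has to combine the $J$-invariance of the center (cf.\ \cite[Proposition 3.1]{EFV} and Lemma \ref{lem:SKT-center-J-inv}) with further structural arguments (passing to quotients by $J$-invariant central ideals such as $\a_1(J)=\z\cap J\z$ and analyzing the resulting low-step configurations) before any sum-of-squares conclusion can be extracted. Your contrapositive setup and the observation $[C^{s-1},C^{s-1}]\subset C^{2s-2}=0$ for $s\geq 3$ are fine as far as they go, but the step you yourself label as ``where the real work lies'' is missing, and the specific identity you hope for is false as stated.
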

\begin{remark}    
It was  proved in \cite[Proposition 3.1]{EFV} that if $(\n , J, \pint )$ is a nilpotent  Hermitian Lie algebra such that $\pint$ is pluriclosed, then the center $\z$ of $\n$ is $J$-invariant. This fact 
also follows from Lemma \ref{lem:SKT-center-J-inv} and Theorem \ref{thm:SKT-2-step}.  \end{remark}
We prove next that a Lie algebra $\g$ with a bi-invariant complex structure admits no compatible pluriclosed metric unless $\g$ is abelian.  This fact was already known for nilpotent
Lie algebras of dimension $\geq 6$  having a rational form \cite[ Proposition 23]{Ug}. 

\begin{corollary} Let $\n$ be a nilpotent Lie algebra with a bi-invariant complex structure $J$. If there exists a pluriclosed inner product $\pint$
    on $(\n, J)$, then $\n$ is abelian. 
\end{corollary}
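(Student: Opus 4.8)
The plan is to use the two results recalled just above together with a single one-line computation forced by bi-invariance. First I would invoke Theorem~\ref{thm:SKT-2-step}: since $(\n,J)$ carries a pluriclosed inner product $\pint$, the Lie algebra $\n$ is either abelian or $2$-step nilpotent. In the abelian case there is nothing to prove, so the whole argument reduces to showing that the $2$-step nilpotent case cannot occur (equivalently, that it degenerates to the abelian one). The strategy is to compute the center of $\n$ via Lemma~\ref{lem:SKT-center-J-inv} and show it is all of $\n$.

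The key observation is that bi-invariance trivializes the defining quantity in Lemma~\ref{lem:SKT-center-J-inv}. Indeed, setting $x=y$ in the bi-invariance identity $J[x,y]=[x,Jy]$ gives
\[
[y,Jy]=J[y,y]=0\qquad\text{for all }y\in\n,
\]
since $[y,y]=0$. Thus every $y\in\n$ satisfies the condition $[y,Jy]=0$ characterizing the center.

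Assuming now that $\n$ is $2$-step nilpotent, Lemma~\ref{lem:SKT-center-J-inv} applies and yields $\z=\{y\in\n:[y,Jy]=0\}=\n$. Hence $\n$ equals its own center, i.e.\ $\n$ is abelian, contradicting the fact that a $2$-step nilpotent Lie algebra is by definition non-abelian. Therefore the $2$-step alternative of Theorem~\ref{thm:SKT-2-step} is impossible, and we conclude that $\n$ is abelian.

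I do not anticipate a genuine obstacle here: the entire content is the elementary identity $[y,Jy]=J[y,y]=0$ for bi-invariant $J$, which makes the center characterization of Lemma~\ref{lem:SKT-center-J-inv} collapse. The only point requiring care is to state the reduction cleanly: one must first pass through Theorem~\ref{thm:SKT-2-step} so that Lemma~\ref{lem:SKT-center-J-inv} (which is formulated for $2$-step nilpotent $\n$) is actually applicable, and then read the resulting equality $\z=\n$ as a contradiction with two-step nilpotency rather than as the conclusion itself.
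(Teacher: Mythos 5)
Your argument is correct and coincides with the paper's own proof: both reduce via Theorem~\ref{thm:SKT-2-step} to the $2$-step case, then use the identity $[y,Jy]=J[y,y]=0$ forced by bi-invariance together with the center characterization of Lemma~\ref{lem:SKT-center-J-inv} to derive $\z=\n$, a contradiction. No gaps.
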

\begin{proof}
    Assume that there exists  a pluriclosed inner product $\pint$
    on $(\n, J)$, then it follows from Theorem \ref{thm:SKT-2-step} that $\n$ is 2-step nilpotent or abelian. If $\n$ were not abelian, then it would be 2-step nilpotent and Lemma \ref{lem:SKT-center-J-inv} would imply that $\z= \{y\in \n : [y,Jy]=0\}.$ Since $J$ is bi-invariant, then $[y,Jy]=J[y,y]=0$ for all $y\in \n$, that is, $\z=\n$, hence, $\n$ is abelian, a contradiction. 
\end{proof}

\begin{corollary}\label{cor:3-step-no-skt}
 If $\n$ is nilpotent and $J$ is a 3-step complex structure on $\n$, then no Hermitian metric on $(\n ,J)$    is  pluriclosed.
 \end{corollary}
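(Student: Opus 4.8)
The plan is to argue by contradiction, leveraging the dichotomy of Theorem~\ref{thm:SKT-2-step} together with the structural results of the previous sections. Suppose $\pint$ is a pluriclosed Hermitian metric on $(\n, J)$ with $J$ a 3-step complex structure. First I would invoke Theorem~\ref{thm:SKT-2-step} to conclude that $\n$ is either 2-step nilpotent or abelian. The abelian case can be discarded immediately: if $\n$ were abelian then $\z=\n$ and hence $\a_1(J)=\z\cap J\z=\n$, so $J$ would be 1-step, contradicting the hypothesis that $J$ is 3-step. Therefore $\n$ is necessarily 2-step nilpotent.

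The heart of the argument is then a tension between two facts about the center $\z$. On the one hand, since $\pint$ is pluriclosed and $\n$ is 2-step nilpotent, the Remark following Theorem~\ref{thm:SKT-2-step} (equivalently \cite[Proposition 3.1]{EFV}, which also follows from Lemma~\ref{lem:SKT-center-J-inv}) guarantees that $\z$ is $J$-invariant, that is, $J\z\subset\z$. On the other hand, Corollary~\ref{cor:3-step} asserts that a 3-step complex structure on a 2-step nilpotent Lie algebra must satisfy $J\z\not\subset\z$. These two conclusions are incompatible, which furnishes the desired contradiction and completes the proof.

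There is essentially no computational obstacle to surmount here, since the substantive work has already been carried out in establishing Theorem~\ref{thm:SKT-2-step} and Corollary~\ref{cor:3-step}. The only point requiring a moment of care is the elimination of the abelian case, which is why I would record explicitly that a 3-step structure cannot live on an abelian algebra. The conceptual core is the observation that pluriclosedness forces the $J$-invariance of the center, whereas the 3-step condition is precisely the obstruction to that invariance.
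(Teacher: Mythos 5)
Your proof is correct and follows essentially the same strategy as the paper: invoke Theorem~\ref{thm:SKT-2-step} to reduce to the $2$-step nilpotent case and then derive a contradiction from the way the $3$-step condition interacts with the center. The only (cosmetic) difference is that you route the contradiction through the $J$-invariance of $\z$ (the Remark after Theorem~\ref{thm:SKT-2-step} together with Corollary~\ref{cor:3-step}), whereas the paper applies Lemma~\ref{lem:SKT-center-J-inv} directly to an element $y\in\n'$ with $Jy\notin\z$ to get $[y,Jy]\neq 0$ against $[y,Jy]=0$; both rest on the same underlying facts.
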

\begin{proof}
 Since $J$ is 3-step, then $\n$ is not abelian. If $\pint$ were a pluriclosed metric on  $(\n ,J)$, then $\n$ would be  2-step nilpotent (Theorem \ref{thm:SKT-2-step}), hence  $\n '\subset \z$.   As $J$ is  3-step,   there exists $y \in  \n '$ such that $Jy \notin \z$,  therefore, $[y, Jy]\neq 0$ (Lemma \ref{lem:SKT-center-J-inv}). On the other hand, $y\in \n '\subset\z$, therefore, $[y, Jy]=0$, a contradiction.
\end{proof}

The next corollary is a straightforward consequence of Theorem \ref{thm:SKT-2-step} and Corollary  \ref{cor:3-step-no-skt}.
\begin{corollary}\label{cor:skt-J-2step}
    Let $\n$ be a nilpotent Lie algebra with a complex structure $J$. If  $(\n ,J)$    admits a  pluriclosed metric, then $\n$ is abelian or  $J$ is 2-step.
\end{corollary}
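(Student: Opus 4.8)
The plan is to combine the two cited results with the dichotomy from Theorem~\ref{2or3}, so the argument is a short case analysis. Suppose $(\n, J)$ admits a pluriclosed metric $\pint$. First I would invoke Theorem~\ref{thm:SKT-2-step} to reduce to two cases: either $\n$ is abelian, in which case the conclusion holds trivially, or $\n$ is $2$-step nilpotent.

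In the remaining case $\n$ is $2$-step nilpotent and carries the complex structure $J$ together with the pluriclosed metric $\pint$. Here I would apply Theorem~\ref{2or3}, which guarantees that $J$ is $t$-step with $t=2$ or $t=3$. It therefore suffices to exclude the value $t=3$. This exclusion is immediate from Corollary~\ref{cor:3-step-no-skt}: if $J$ were $3$-step, then no Hermitian metric on $(\n,J)$ could be pluriclosed, contradicting the existence of $\pint$. Hence $t=2$, that is, $J$ is $2$-step, which completes the argument.

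There is essentially no obstacle to surmount; the statement is a formal consequence of the three cited facts, as the introductory sentence already signals. The only point that deserves attention is the logical ordering: it is the reduction to a $2$-step nilpotent algebra furnished by Theorem~\ref{thm:SKT-2-step} that makes Theorem~\ref{2or3} applicable in the first place, so that the sole alternative to $J$ being $2$-step is that it be $3$-step, precisely the possibility eliminated by Corollary~\ref{cor:3-step-no-skt}.
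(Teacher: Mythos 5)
Your argument is correct and is precisely the one the paper intends: it gives no written proof, only the remark that the corollary is a straightforward consequence of Theorem~\ref{thm:SKT-2-step} and Corollary~\ref{cor:3-step-no-skt}, and your case analysis (abelian versus $2$-step nilpotent, then invoking Theorem~\ref{2or3} to reduce to excluding the $3$-step case) fills in exactly the intended steps.
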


\begin{theorem}    
\label{prop:skt-2-step}
  Let   $(\n ,J, \pint )$ be a Hermitian Lie algebra such that $\n$ is  2-step nilpotent. Then $\pint$ is pluriclosed if and only if $J$ is 2-step and the following condition is satisfied:
  \begin{equation}\label{eq:skt-J-2step}   
  \begin{split}    &\la [Jy, J z], [w,u]\ra    
   - \la [Ju, J z], [w, y ]\ra   
   + \la [Ju, Jy], [w, z]\ra     \\
 & + \la [Jw, J z], [u, y ]\ra    
   - \la [Jw, Jy], [u, z]\ra  
   + \la [Jw, Ju], [y , z]\ra =0,   
\end{split}
\end{equation}
for all $w,u,y,z\in \n$.

In particular, if $J$ is abelian, then $\pint$  is pluriclosed if and only if 
\begin{equation}\label{skt}
j([u,y])z + j([y,z])u+j([z,u])y = 0, \quad \text{ for all } u,y,z\in \v ,
\end{equation}
where  $\v =\z ^\perp$ and $j:\z \to \operatorname{End}\, ( \v)$ is defined in \eqref{jz} by taking $\z_0=\z$, the center of $\n$.
\end{theorem}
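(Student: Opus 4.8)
The plan is to compute $dc$ directly from the general formula \eqref{eq:dc}, using that $\n$ is $2$-step nilpotent to collapse most of its terms. First I would dispose of the easy half of the forward implication: since $\n$ is $2$-step nilpotent it is in particular not abelian, so Corollary \ref{cor:skt-J-2step} guarantees that any pluriclosed metric forces $J$ to be $2$-step. This reduces the whole analysis to the standing hypothesis that $J$ is $2$-step, from which both directions of the equivalence will be read off from a single computation of $dc$.

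The key step is the simplification of \eqref{eq:dc} when $J$ is $2$-step. By Lemma \ref{J-2-pasos} this hypothesis is equivalent to $J\n'\subset\z$. The eighteen summands of \eqref{eq:dc} fall into two groups: twelve terms containing an iterated bracket of the shape $[J[a,b],Jc]$, and six terms of the shape $\la[Ja,Jb],[c,d]\ra$ occurring in the middle column. For any $a,b\in\n$ one has $[a,b]\in\n'$, hence $J[a,b]\in J\n'\subset\z$; as the first argument of the bracket is then central, every expression $[J[a,b],Jc]$ vanishes. This annihilates all twelve terms of the first group, and the six surviving middle terms are precisely the left-hand side of \eqref{eq:skt-J-2step}. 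Thus for a $2$-step $J$ we obtain $dc(w,u,y,z)$ equal to the left-hand side of \eqref{eq:skt-J-2step}, so $dc\equiv 0$ is equivalent to \eqref{eq:skt-J-2step}; combined with the previous paragraph this gives the stated equivalence in both directions.

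For the abelian case, recall that an abelian $J$ is automatically $2$-step (Corollary \ref{abelian-implies-2-pasos}), so the criterion just obtained applies. Substituting the abelian identity $[Ja,Jb]=[a,b]$ into \eqref{eq:skt-J-2step} and using the symmetry of $\pint$ together with the antisymmetry of the bracket, the six terms coalesce into three equal pairs; dividing by $2$, condition \eqref{eq:skt-J-2step} becomes
\[ \la[y,z],[w,u]\ra-\la[u,z],[w,y]\ra+\la[u,y],[w,z]\ra=0. \]
Since any bracket vanishes as soon as one of its entries lies in $\z$, this holds for all of $\n$ if and only if it holds for all $w,u,y,z\in\v=\z^\perp$. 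Pairing with an arbitrary $w\in\v$ and rewriting each factor through the defining relation $\la j(z')v,w\ra=\la z',[v,w]\ra$ of \eqref{jz} converts the identity into $\la j([u,y])z+j([y,z])u+j([z,u])y,\,w\ra=0$, which is exactly \eqref{skt}.

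I expect the main obstacle to be the bookkeeping in the second paragraph: one must check carefully that exactly the twelve iterated-bracket terms drop out and that the six middle terms reproduce \eqref{eq:skt-J-2step} with the correct signs. The conceptual content, however, is the single observation that $J$ being $2$-step forces $J\n'\subset\z$, so that every bracket $[J[a,b],\,\cdot\,]$ is zero; once this is in hand the computation collapses, and the abelian reduction is routine sign-tracking.
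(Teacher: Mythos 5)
Your proposal is correct and follows essentially the same route as the paper: pluriclosed forces $J$ to be 2-step via Corollary \ref{cor:skt-J-2step}, the inclusion $J\n'\subset\z$ kills the twelve iterated-bracket terms of \eqref{eq:dc} so that $dc$ reduces exactly to the left-hand side of \eqref{eq:skt-J-2step}, and the abelian case collapses the six terms into three pairs before rewriting via the skew-symmetry of $j$. The sign bookkeeping you flag as the main risk checks out.
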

\begin{proof}
Assume that $\pint$ is pluriclosed, then Corollary \ref{cor:skt-J-2step} implies that  $J$ is 2-step. Moreover, the condition $dc=0$ is equivalent to \eqref{eq:skt-J-2step} since $J\n '\subset \z$ (see \eqref{eq:dc}). 

Conversely, assume that $J$ is 2-step and \eqref{eq:skt-J-2step} is satisfied. Since $J\n '\subset \z$, then $dc$ is given by 
\begin{equation*}   
  \begin{split}    dc(w,u,y,z)=&\la [Jy, J z], [w,u]\ra    
   - \la [Ju, J z], [w, y ]\ra   
   + \la [Ju, Jy], [w, z]\ra     \\
 & + \la [Jw, J z], [u, y ]\ra    
   - \la [Jw, Jy], [u, z]\ra  
   + \la [Jw, Ju], [y , z]\ra .  
\end{split}
\end{equation*}
Since, by assumption, \eqref{eq:skt-J-2step} is satisfied, then $dc=0$, that is, $\pint$ is pluriclosed.

    When $J$ is abelian,  the pluriclosed condition \eqref{eq:skt-J-2step} becomes:
\begin{equation}\label{eq:skt-J-abelian}
        2\left( \la [y,  z], [w,u]\ra    
   - \la [u,  z], [w, y ]\ra   
   + \la [u, y], [w, z]\ra     
       \right) =0, 
  \end{equation}
  for all $w,u,y,z \in \n$ (compare with \cite[Proposition 2.2]{FTV}). Equation \eqref{eq:skt-J-abelian} is clearly satisfied when one of $w,u,y,z $ belongs to $\z$. Therefore, the pluriclosed condition is equivalent to \begin{equation}\label{eq:skt-abelian}      
   \la [y,  z], [w,u]\ra    
   - \la [u,  z], [w, y ]\ra   
   + \la [u, y], [w, z]\ra =0, \quad \text{ for all } w,u,y,z \in \v ,
   \end{equation}
where $\v = \z ^\perp $. Using the definition of $j: \z \to \operatorname{End}\, ( \v)$ given in \eqref{jz}, with $\z_0=\z$, equation \eqref{eq:skt-abelian} becomes:
\begin{eqnarray*} 0&=&\la j([y,z])w,u \ra- \la j([u,z])w,y \ra + \la j([u,y])w,z \ra \\
& =& -\la w,  j([y,z])u \ra+ \la w, j([u,z])y \ra - \la w, j([u,y])z \ra \\ & =& -\la w,  j([y,z])u + j([z,u])y + j([u,y])z \ra    ,                      \end{eqnarray*}
for all $u,y,z,w\in \v$, which is equivalent to:
\[ j([y,z])u +  j([z,u])y + j([u,y])z =0,
\]
for all $u,y,z\in \v$, and the theorem follows.
\end{proof}

\

In the next theorem, $\h _{2m+1}$ denotes the $(2m+1)$-dimensional Heisenberg Lie algebra, which has a basis $\{ x_1, \ldots, x_m,  y_1, \ldots , y_m, z\}$  such that the non-trivial Lie brackets are given by $[x_i,y_i]=-[y_i, x_i]=z$ for all $i$.  
\begin{theorem}
    \label{prop:skt-nilp} Let $(\n , J, \pint )$ be a Hermitian  nilpotent   Lie algebra such that $\dim \n ' =1$. Then $ \pint $ is pluriclosed if and only if $\n$ is isomorphic to $\R^{2d-3}\oplus  \h_3$, where   $\dim\n =2d$. 
\end{theorem}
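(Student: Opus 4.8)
The plan is to reduce the pluriclosed condition to a statement about a single skew-symmetric operator, and then to a question about a nondegenerate $2$-form. Since $\n$ carries the complex structure $J$ it is even-dimensional, say $\dim\n=2d$, and because $\dim\n'=1$ Proposition~\ref{prop:heisenberg} gives $\n\cong\R^{2k+1}\oplus\h_{2n+1}$ and tells us that $J$ is necessarily abelian; by Corollary~\ref{cor:heisenberg} (equivalently Corollary~\ref{abelian-implies-2-pasos}) $J$ is then $2$-step and $\n$ is $2$-step nilpotent. This places us in the situation of Theorem~\ref{prop:skt-2-step}, whose abelian case says that $\pint$ is pluriclosed if and only if
\[ j([u,y])z+j([y,z])u+j([z,u])y=0,\qquad u,y,z\in\v, \]
where $\v=\z^\perp$ and $j:\z\to\operatorname{End}(\v)$ is as in \eqref{jz} (with $\z_0=\z$); this is precisely condition \eqref{skt}.

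Next I would exploit that $\n'$ is one-dimensional. Writing $\n'=\R z_0$ with $z_0$ a unit vector and setting $A:=j(z_0)$, every bracket of elements of $\v$ has the form $[u,y]=\la Au,y\ra\,z_0$, so $j([u,y])=\la Au,y\ra\,A$ and the pluriclosed condition becomes
\[ \la Au,y\ra\,Az+\la Ay,z\ra\,Au+\la Az,u\ra\,Ay=0,\qquad u,y,z\in\v. \]
The operator $A$ is skew-symmetric, and it is in fact invertible on $\v$: if $Av=0$ then $[v,\v]=0$, and since brackets with $\z$ vanish this forces $v\in\z\cap\v=0$. Applying $A^{-1}$ therefore reduces the condition to the purely bilinear cyclic identity $\omega(u,y)z+\omega(y,z)u+\omega(z,u)y=0$ for all $u,y,z\in\v$, where $\omega(u,y):=\la Au,y\ra$ is a \emph{nondegenerate} $2$-form on $\v$.

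The heart of the matter — and the step I expect to be the main obstacle — is to show that this cyclic identity for a nondegenerate $\omega$ holds if and only if $\dim\v=2$. When $\dim\v=2$ the identity holds by a direct check on an orthonormal basis adapted to $A$. When $\dim\v\ge 4$ I would argue by contradiction: choose $u,y$ with $\omega(u,y)=1$; then $\operatorname{span}\{u,y\}$ is a symplectic plane, so its $\omega$-orthogonal complement is nonzero, and any nonzero $z$ in it satisfies $\omega(y,z)=\omega(z,u)=0$ while $z\notin\operatorname{span}\{u,y\}$, whence the left-hand side equals $z\neq 0$. Thus the identity is equivalent to $\dim\v=2$.

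Finally, $\dim\v=\dim\n-\dim\z=2n$ is determined by the isomorphism type of $\n$, and equals $2$ exactly when $n=1$, i.e. when the Heisenberg factor is $\h_3$. Since $2k+1=\dim\n-3=2d-3$ in that case, the chain of equivalences reads: $\pint$ is pluriclosed $\iff$ the cyclic identity holds $\iff\dim\v=2\iff n=1\iff\n\cong\R^{2k+1}\oplus\h_3=\R^{2d-3}\oplus\h_3$, which yields both implications of the theorem simultaneously.
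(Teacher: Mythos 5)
Your proof is correct and follows essentially the same route as the paper: reduce to the cubic condition \eqref{skt} via Proposition~\ref{prop:heisenberg} and Theorem~\ref{prop:skt-2-step}, then show it fails as soon as the Heisenberg factor exceeds $\h_3$ by evaluating on a triple $u,y,z$ with $\omega(u,y)=1$ and $z$ symplectically orthogonal to $u,y$ (the paper's choice $x_1,y_1,x_2$ is exactly such a triple). Your factoring out of the invertible operator $A=j(z_0)$ to get the clean cyclic identity for the nondegenerate form $\omega$ is a tidy repackaging, but not a different argument.
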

\begin{proof} It follows from 
\cite[Theorem 4.1]{BD} that a nilpotent Lie algebra $\n$ such that $\dim \n '=1$ is $2$-step nilpotent and isomorphic to $ \R^s \oplus \h _{2m+1}$. If $J$ is a complex structure on $\n$, then it was shown in 
\cite[Proposition 2.2]{Rol} that $J$ is abelian. 

Assume that $\pint $ is pluriclosed. We wish to show that $m=1$. Let $z\in \n'$ be a unit vector, and  decompose $\n$ orthogonally as $\n=\z\oplus \v$.   Since  $\pint$ is Hermitian and $J$ is abelian, then both, $\z $ and $\v$, are $J$-invariant. It follows from Theorem  \ref{prop:skt-2-step} that the pluriclosed condition in this case is equivalent to \eqref{skt}. 

  Let $\mu\in \Lambda^2(\n ^*)$   be defined by $[x,y]=\mu(x,y)z$ for all $x,y \in \n$. 
 Since $\n$ is isomorphic to $ \R^s \oplus \h _{2m+1}$, it follows that $2m=\text{rank}\, \mu$, therefore, there exist linearly independent $x_1, \ldots , x_m$, $y_1,\ldots , y_m   \in \v$ generating a subspace $\k \subset \v $ such that the matrix of the restriction of $\mu$ to $\k$ in this basis is given by:
\[ \mu |_{\k\times\k } =\begin{pmatrix} 0 & -I \\
I & 0
\end{pmatrix},
\]
where $I$ is the identity $(m\times m)$-matrix. In other words, $[x_i, y_i] =- [y_i, x_i]=z$ for all $i$, hence, $\k\oplus \R z= \h_{2m+1}$. Assume that $m>1$ and compute \eqref{skt} for $x_1,y_1,x_2$:
\[ j([x_1,y_1]) x_2+ j([y_1,x_2])x_1+j([x_2,x_1])y_1 = j(z)x_2 \neq 0,
\]
since $\la j(z)x_2 , y_2\ra = \la z, [x_2, y_2]\ra =\la z, z \ra=1$. Therefore, $ \pint $ is not pluriclosed when $m>1$. Clearly, \eqref{skt} is satisfied for $m=1$, and the theorem follows.
\end{proof}

\begin{remark}
    According to Theorem \ref{prop:skt-nilp},  the Lie algebras  $\n=\R^{2k+1} \oplus \h _{2m+1}$ with $k\geq 0$, $m\geq 2$, 
are examples of 2-step nilpotent Lie algebras such  that no complex structure on $\n$ admits compatible pluriclosed metrics. \end{remark}

\begin{remark}
    It follows from  Theorem \ref{prop:skt-nilp} that no Hermitian metric on $\R^{2k+1}\oplus  \h_{2m+1}$, $m\geq 2$, is pluriclosed.   On the other hand, it was shown in \cite{AV} that $\R^{2k+1}\oplus  \h_{2m+1}$ admits a balanced Hermitian structure if and only if $m\geq 2$. 
\end{remark}


\smallskip

\subsection{Pluriclosed metrics compatible with a class of abelian complex structures.} In \cite{Tam} a construction of  $2$-step nilpotent metric Lie algebras $(\n , \pint )$ was given from
homogeneous fiber bundles over compact irreducible symmetric spaces. We will consider the following particular case of this construction. 
Let $(\g ,\h )$ be a compact semisimple symmetric pair of Lie algebras, that is, $\g$ is a compact semisimple Lie algebra, $\h$ is a compact subalgebra of $\g$  and if $\m$ denotes the orthogonal complement of $\h$ in $\g$ with respect to the Killing form $B$ of $\g$, then $\m$ satisfies: 
\[  [\h,\m]\subset \m , \qquad\quad [\m ,\m ]\subset \h .  
\] 
Set $\n=\n(\g,\h):=\h \oplus \m$ (i.e., $\n$ and $\g$ are isomorphic as vector spaces) with the inner product $\pint =-B$ and the following Lie bracket $[\, \cdot  ,\, \cdot ]_\n$:
\begin{equation}\label{eq:sspair} [z+x , z'+x' ]_\n  = [x,x'], \qquad z,z' \in \h , \; x,x'\in \m ,   \end{equation}
where $[ \,\cdot , \, \cdot ]$ is the Lie bracket on $\g$. 
Then $(\n , [\,\cdot ,\,\cdot ]_\n)$ is a $2$-step nilpotent  Lie algebra (see \cite{Tam}).

Recall that $(\g ,\h )$ is said to be irreducible  when the adjoint representation of $\h$ on $\m$ is irreducible.  
In this particular case,  Proposition 3.3 in \cite{Tam} implies the following result. 
\begin{proposition}[{\cite[Proposition 3.3]{Tam}}] \label{prop:Tam}
Let  $(\n (\g ,\h ),\pint)$ be the  $2$-step nilpotent  Lie algebra corresponding to the compact semisimple symmetric pair $(\g ,\h )$ as in \eqref{eq:sspair}. 
Assume that $(\g ,\h )$ is irreducible. Then:
\begin{enumerate}
\item[$\ri$] the center of $\n (\g ,\h )$ coincides with $\h$, 
\item[$\rii$] $j(z)=\ad_z |_\m$, for $z\in \h$, where $\ad$ is the adjoint representation of $\g$. 
\end{enumerate}    
\end{proposition}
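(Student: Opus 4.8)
The plan is to prove the two assertions in turn, with part $\ri$ carrying essentially all of the work. For part $\ri$ I would first note that $\h\subset\z$ is immediate: by the definition \eqref{eq:sspair} of the bracket of $\n$, every bracket with a factor in $\h$ vanishes, so $[\h,\n]_\n=0$. For the reverse inclusion, write an arbitrary element as $z+x$ with $z\in\h$, $x\in\m$; since $[z+x,z'+x']_\n=[x,x']$, such an element is central exactly when $[x,\m]=0$ in $\g$. Hence it suffices to show that $\m_0:=\{x\in\m:[x,\m]=0\}$ is zero.

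To prove $\m_0=0$ I would bring in irreducibility. First, $\m_0$ is invariant under the isotropy action $\ad(\h)|_\m$: for $x\in\m_0$, $w\in\h$ one has $[w,x]\in\m$, and for every $x'\in\m$ the Jacobi identity gives
\[ [[w,x],x']=[w,[x,x']]-[x,[w,x']]=0, \]
since $[x,x']=0$ and $[w,x']\in\m$ forces $[x,[w,x']]=0$ as well; thus $[w,x]\in\m_0$. As $(\g,\h)$ is irreducible, $\ad(\h)$ acts irreducibly on $\m$, so $\m_0$ equals $0$ or $\m$. The alternative $\m_0=\m$ would give $[\m,\m]=0$; but then $[\g,\m]=[\h,\m]+[\m,\m]\subset\m$ would make $\m$ a nonzero abelian ideal of the semisimple Lie algebra $\g$, which is impossible. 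Therefore $\m_0=0$, every central element lies in $\h$, and $\z=\h$.

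For part $\rii$ the verification is short once $\z=\h$ is known. By part $\ri$ we may take $\z_0=\z=\h$, so that $\v=\z^\perp$ (for $\pint=-B$) equals $\m$, and the map $j\colon\h\to\s\o(\m)$ of \eqref{jz} satisfies $\la j(z)v,w\ra=\la z,[v,w]_\n\ra=-B(z,[v,w])$ for $v,w\in\m$. Using that each $\ad_v$ is skew-symmetric for $B$ (invariance of the Killing form), I would compute
\[ -B(z,[v,w])=-B(z,\ad_v w)=B(\ad_v z,w)=B([v,z],w)=-B([z,v],w)=\la[z,v],w\ra. \]
Since $[z,v]=\ad_z v\in\m$ and the identity holds for all $w\in\m$, nondegeneracy of $\pint$ on $\m$ gives $j(z)v=\ad_z v$, i.e.\ $j(z)=\ad_z|_\m$.

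The only genuinely delicate point is excluding $\m_0=\m$ in part $\ri$: this is exactly where irreducibility of the pair and semisimplicity of $\g$ combine, and it is also what ensures that $\n$ is truly $2$-step rather than abelian. Once $\z=\h$ is established, part $\rii$ follows routinely from the $\ad$-invariance of $B$.
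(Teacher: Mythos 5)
Your proof is correct. Note, however, that the paper does not prove this proposition at all: it is imported verbatim as \cite[Proposition 3.3]{Tam}, so there is no internal argument to compare against. Your write-up supplies a complete, self-contained derivation, and both halves check out. For \ri, the identification $\z(\n)=\h\oplus\m_0$ with $\m_0=\{x\in\m:[x,\m]=0\}$ is immediate from \eqref{eq:sspair}, and your Jacobi-identity computation correctly shows $\m_0$ is $\ad(\h)|_\m$-invariant, so irreducibility leaves only $\m_0=0$ or $\m_0=\m$; the latter is ruled out exactly as you say, since it would make $\m$ a nonzero abelian ideal of the semisimple $\g$ (and this is indeed also what guarantees $[\m,\m]\neq 0$, i.e.\ that $\n$ is genuinely $2$-step). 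For \rii, the chain $-B(z,[v,w])=B([v,z],w)=\la[z,v],w\ra$ is just $\ad$-invariance of $B$, and positive definiteness of $-B$ on $\m$ lets you cancel $w$; the only point worth stating explicitly is that $\ad_z v\in\m$ because $[\h,\m]\subset\m$, which you do. No gaps.
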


Let $(\g ,\h, J, -B )$ be a compact semisimple  irreducible  Hermitian symmetric pair, that is, $(\g ,\h )$ is a  compact semisimple  irreducible   symmetric pair and $J:\m \to \m$ is an orthogonal endomorphism such that $J^2=-I$ and whose extension to $\g$ by $J|_\h =0$ satisfies:
\[ J[x,y]=[x,Jy], \quad \text{ for } x\in\h, \; y\in \m , \qquad N_J(x,y)\in \h , \quad \text{ for } x, y\in\g .  \]
  If $\dim \h \equiv s \pmod{2}$, $s=0$ or $1$, define $\J:\R^s\oplus\g \to \R^s\oplus\g$ as follows:
\begin{itemize}
\item If $\dim \h$ is even ($s=0$) take an  {orthogonal} endomorphism $J_1$ of  $\h$ such that {$J_1=-I$} and define $\J: \g \to  \g$ by  $\J|_\h= J_1, \;  \J|_\m= J$.
\item If $\dim \h$ is odd ($s=1$),  consider $\R\oplus\g$ and extend $-B$ to an inner product on $\R\oplus\g$ such that $\R $ is orthogonal to $\g$.   
Let $J_1$ be an {orthogonal} endomorphism of  $\R\oplus\h$ such that $J_1^2=-I$ and define  $\J:\R\oplus \g \to \R\oplus \g$ by 
$\J|_{\R\oplus\h}= J_1, \; 
 \J|_\m= J$.
\end{itemize}
             
\begin{proposition}\label{nat-red-skt} Let $\n (\g ,\h)$ be the $2$-step nilpotent Lie algebra constructed from a compact semisimple  irreducible  Hermitian symmetric pair $(\g ,\h, J, -B)$ as in \cite{Tam}. Let $s=0$ or $1$, where  $\dim \h \equiv s \pmod{2}$. Then the  $2$-step nilpotent Lie algebra   
 $(\R ^s \oplus \n (\g ,\h) ,  \pint )$, with $\pint =-B$,  admits  
 an  abelian complex structure such that  $ \pint $ is  pluriclosed.  Moreover, if $N(\g ,\h)$ is the simply connected Lie group with Lie algebra $\n (\g ,\h)$ and $ \pint$ is the corresponding left invariant metric on $\R^s\times N(\g ,\h)$, then $\pint$ is naturally reductive.
\end{proposition}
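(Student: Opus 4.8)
The plan is to exhibit the complex structure $\J$ described just before the statement, to check that it is abelian (hence integrable) and $\pint$-compatible, to deduce the pluriclosed property from Theorem~\ref{prop:skt-2-step}, and finally to read off natural reductivity from the representation-theoretic structure of the construction. Throughout I write $\n=\n(\g,\h)=\h\oplus\m$, recalling from Proposition~\ref{prop:Tam} that its center is $\z=\h$ and that $j(z)=\ad_z|_\m$ for $z\in\h$, and that $J\colon\m\to\m$ is orthogonal with $J^2=-I$ and commutes with $\ad_x|_\m$ for every $x\in\h$. Adding the flat summand only enlarges the center to $\R^s\oplus\h$, and $\J$ acts by $J_1$ on $\R^s\oplus\h$ and by $J$ on $\m$.

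First I would reduce the abelian condition to a single identity on $\m$. Since in $\R^s\oplus\n$ every bracket only sees the $\m$-components, for $X,Y$ with $\m$-parts $v,w$ one has $[X,Y]=[v,w]$ and $[\J X,\J Y]=[Jv,Jw]$; hence $\J$ is abelian exactly when $[Jv,Jw]=[v,w]$ for all $v,w\in\m$. To prove this I use that $J$ is skew-symmetric for $B$ (immediate from $J$ orthogonal with $J^2=-I$), the $\ad$-invariance of $B$, and the Hermitian axiom $J[x,w]=[x,Jw]$ for $x\in\h$, $w\in\m$: for every $x\in\h$,
\[
B([Jv,w],x)=-B(Jv,\ad_x w)=B(v,\ad_x Jw)=-B([v,Jw],x),
\]
and since $[Jv,w]+[v,Jw]\in\h$ while $B|_\h$ is nondegenerate, this gives $[Jv,w]=-[v,Jw]$; applying it twice yields $[Jv,Jw]=-[v,J^2w]=[v,w]$. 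This is precisely where the word \emph{Hermitian} in the hypothesis is used, and I expect it to be the most delicate computation. An abelian complex structure is automatically integrable (replacing $Y$ by $\J Y$ in $[\J X,\J Y]=[X,Y]$ gives $[\J X,Y]+[X,\J Y]=0$, whence $N_\J\equiv0$), and $\J$ is $\pint$-compatible because it preserves the orthogonal decomposition $(\R^s\oplus\h)\oplus\m$ and acts orthogonally on each factor.

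For the pluriclosed property I would invoke the abelian case of Theorem~\ref{prop:skt-2-step}. With $\z=\R^s\oplus\h$ and $\v=\z^\perp=\m$, and noting that $j$ vanishes on $\R^s$ while $j(z)=\ad_z|_\m$ on $\h$, the criterion \eqref{skt} becomes, for $u,y,z\in\m$,
\[
\ad_{[u,y]}z+\ad_{[y,z]}u+\ad_{[z,u]}y=[[u,y],z]+[[y,z],u]+[[z,u],y]=0,
\]
which is exactly the Jacobi identity in $\g$. Thus $\pint$ is pluriclosed with no further work; this clean reduction to Jacobi is the conceptual heart of the statement.

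Finally, for natural reductivity I would observe that $j\colon\h\to\s\o(\m)$ is a Lie algebra homomorphism, since $[j(z),j(z')]=[\ad_z|_\m,\ad_{z'}|_\m]=\ad_{[z,z']}|_\m=j([z,z'])$, and that $-B|_\h$ is $\ad_\h$-invariant and positive definite ($\h$ being compact). Hence $(\n(\g,\h),-B)$ is one of the naturally reductive $2$-step nilmanifolds built from a representation of a compact Lie algebra, equivalently it is naturally reductive by the construction of \cite{Tam,Lau}. Since the metric splits as an orthogonal Riemannian product with the flat factor $\R^s$ (the summand $\R^s$ being central and commuting with $\n$), and a Riemannian product of naturally reductive spaces is naturally reductive, $\R^s\times N(\g,\h)$ with $\pint=-B$ is naturally reductive. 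The main obstacle here is not a computation but the need to import the external characterization of naturally reductive nilmanifolds and to verify that enlarging by the flat factor $\R^s$—forced by the parity of $\dim\h$—does not disturb it.
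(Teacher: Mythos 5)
Your proposal is correct and follows the same overall architecture as the paper's proof: establish $[Jv,Jw]=[v,w]$ on $\m$, conclude that $\J$ is abelian because all brackets in $\R^s\oplus\n(\g,\h)$ only see the $\m$-components, reduce pluriclosedness via the abelian case of Theorem~\ref{prop:skt-2-step} to the identity \eqref{skt}, which by Proposition~\ref{prop:Tam}~(ii) is exactly the Jacobi identity of $\g$ restricted to $\m$, and obtain natural reductivity by recognizing $(\n(\g,\h),-B)$ inside Lauret's construction with $V=\m$, $\pi(z)=\ad_z|_\m$ (the paper delegates this last point to Remark~\ref{rem:skt-nat-red}).

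The one genuine difference is in the first step. The paper obtains the key identity \eqref{Hermit-symm} by citing the proof of Theorem 9.6~(1), Ch.~XI of Kobayashi--Nomizu, whereas you derive it from scratch: $B$-invariance gives $B([Jv,w],x)=-B(Jv,\ad_x w)$, skew-symmetry of $J$ together with $J\ad_x=\ad_x J$ on $\m$ gives $-B(Jv,\ad_x w)=B(v,\ad_x Jw)=-B([v,Jw],x)$, and nondegeneracy of $B|_\h$ (negative definite, $\g$ being compact semisimple) yields $[Jv,w]=-[v,Jw]$, hence $[Jv,Jw]=[v,w]$. This computation is correct and makes the argument self-contained; it also isolates exactly which parts of the Hermitian hypothesis are used (only the commutation $J[x,y]=[x,Jy]$ for $x\in\h$, $y\in\m$, plus orthogonality of $J$ — the integrability-type condition $N_J(x,y)\in\h$ is not needed). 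Your additional remarks — that an abelian almost complex structure automatically has $N_\J\equiv 0$, and that adjoining the flat factor $\R^s$ preserves natural reductivity — are small gaps in the paper's exposition that you fill correctly.
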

\begin{proof}  
Let $\n=\R ^s \oplus \n (\g ,\h)$ with the Lie bracket defined in \eqref{eq:sspair}.  
It follows from the proof 
 of \cite[Theorem 9.6 (1), Ch. XI]{KN}) that $J$ satisfies:  
\begin{equation}\label{Hermit-symm}   [Jx,Jy]=[x,y], \qquad \text{ for all } x, y \in \m ,
\end{equation}
where $[\cdot , \cdot ]$ is the Lie bracket on $\g$. 
Consider any orthogonal endomorphism $\J$ as defined above the statement, hence $\pint$ is Hermitian. Equation \eqref{Hermit-symm}  
says that $[\J x,\J y]_\n =[x,y]_\n $ for $x,y\in \m$, and since the center of $\n$  is $\J$-invariant, it follows that $[\J x,\J y]_\n =[x,y]_\n $ for all $x,y\in \n$, that is, $\J$ is abelian on $\n$.  Theorem  \ref{prop:skt-2-step} implies that  $\pint$ is pluriclosed if and only if \eqref{skt} is satisfied for all $x,y,z \in \m$. Proposition \ref{prop:Tam} (ii)  gives that \eqref{skt} is equivalent to the Jacobi identity on $\g$ for $x,y,z \in\m$.

For the last assertion, see Remark \ref{rem:skt-nat-red} in \S\ref{sec-nat-red}.
\end{proof}

\

\section{Abelian complex structures on  naturally reductive \\ homogeneous nilmanifolds} \label{sec-nat-red}
We start this section by recalling from \cite{Lau} the description of naturally reductive homogeneous nilmanifolds via representations.

Let $N$ be a simply connected real nilpotent Lie group  endowed with a left invariant Riemannian metric, denoted by $(N ,\pint)$, where $\pint$ is the inner product on 
the Lie algebra $\n$ of $N$ determined by the metric. The Riemannian manifold $(N ,\pint)$ 
is said to be a (simply connected) homogeneous nilmanifold. When $N$ is a $2$-step nilpotent Lie group then $(N ,\pint)$ is called 
 a (simply connected) $2$-step homogeneous
nilmanifold. 

Natural reductivity on homogeneous nilmanifolds has been studied by C. Gordon in \cite{G} (see also \cite{K,TV}). It is proved in \cite{G}  that if $(N ,\pint)$ is a naturally reductive homogeneous nilmanifold then $N$ must be at most $2$-step nilpotent and a characterization in the $2$-step nilpotent  case is given. Later in \cite{Lau} J. Lauret gave a description of naturally reductive $2$-step homogeneous Riemannian nilmanifolds using representations of compact Lie algebras as follows. Assume that:  
\begin{enumerate}
\item[$\ri$] $\h$ is a compact Lie algebra with an ad$\,\h$-invariant inner product $\pint_\h$,
\item[$\rii$] $(\pi , V)$ is a faithful real representation of $\h$ without trivial subrepresentations, that is,   $\ker \pi =0$ and 
$\bigcap _{x\in \h} \ker\pi(x)=0$. Let $\pint_V$ be a $\pi(\h)$-invariant inner product on $V$.
\end{enumerate}
Consider $\n=\h\oplus V$ with the following Lie bracket:
\begin{eqnarray}
 [\h,\h]_\n&=&[\h,V]_\n=0, \qquad \qquad [V,V]_\n\subset \h,\\ \label{eq:pi_z}
  \la [v,w]_\n, x\ra _\h & =&\la \pi(x)v, w  \ra _V, \qquad \quad \text{for all }x\in \h, \; v,w\in V,
\end{eqnarray}
and endow $\n$ with the inner product $\pint$ such that:
\[ \pint|_{\h\times\h}=\pint_\h, \qquad \pint|_{V\times V}=\pint_V, \qquad \la \h , V\ra=0.
\]
Condition $\rii$ above ensures that $\h$ is precisely the center of $\n$ and it follows from \eqref{eq:pi_z} that the maps $j(x), \; x\in\h $ (see \eqref{jz}),   are given by $j(x)=\pi(x)$.

Let $N$ be the simply connected Lie group with Lie algebra $\n$ and 
endow $N$ with the left invariant metric determined by $\pint$, obtaining a $2$-step
homogeneous nilmanifold denoted by $(N(\h, V), \pint)$, which is naturally reductive with respect to its full isometry group I$(N(\h, V), \pint) $,   and $(N(\h, V), \pint)$ has no   Euclidean factor since  $\ker\pi =0$. Conversely,  any homogeneous nilmanifold $(N, \pint)$ without
Euclidean factor which is naturally reductive with respect to I$(N(\h, V), \pint) $  can be constructed in this way:
\begin{theorem}[{\cite[Theorem 2.7]{Lau}}] The $2$-step
homogeneous nilmanifold $(N(\h, V), \pint)$ constructed as above has no  Euclidean factor and 
is  naturally reductive with respect to ${\rm I}(N(\h, V), \pint)$. Moreover, any homogeneous nilmanifold $(N, \pint)$ without
Euclidean factor which is naturally reductive with respect to ${\rm I}(N, \pint)$ can be constructed in this way. 
\end{theorem}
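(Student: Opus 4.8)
The plan is to prove the two assertions separately: first that the metric Lie algebra $\n=\h\oplus V$ built from $(\h,\pi,V)$ is naturally reductive and has no Euclidean factor, and then that every naturally reductive $2$-step nilmanifold without Euclidean factor arises this way. Throughout I would work at the Lie algebra level, using that for a $2$-step nilpotent Lie group the full isometry group splits as $I(N,\pint)=N\rtimes K$ with $K=\operatorname{Aut}(\n)\cap O(\n,\pint)$ and $\k=\operatorname{Der}(\n)\cap\mathfrak{so}(\n)$; this splitting and Gordon's structural analysis of the $2$-step case are exactly the content of \cite{G}.

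For the forward direction, the first step is to produce enough isometries fixing the identity. For $x\in\h$ I would define $D_x\colon\n\to\n$ by $D_x|_\h=[x,\,\cdot\,]_\h$ and $D_x|_V=\pi(x)$. A short computation --- using that $\pi$ is a Lie algebra homomorphism landing in $\mathfrak{so}(V)$ and that $\pint_\h$ is $\operatorname{ad}$-invariant --- shows $D_x\in\operatorname{Der}(\n)\cap\mathfrak{so}(\n)$, so the connected group $H$ with Lie algebra $\{D_x:x\in\h\}$ acts on $N$ by orthogonal automorphisms and $G:=H\ltimes N\subset I(N,\pint)$ is transitive. The manifold is then $N\cong G/H$, with $\g=\{D_x\}\oplus\n$. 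The key idea is that $\h$ occurs twice, as the isotropy $\{D_x\}$ and as the center of $\n$, and the correct reductive complement is the diagonal
\[ \p \;=\; V \ \oplus\ \{\,D_x+x : x\in\h\,\}, \]
which is $\operatorname{Ad}(H)$-invariant and projects isometrically onto $T_eN\cong\n$. I would then verify the naturally reductive identity $\la[\xi,\eta]_{\p},\zeta\ra+\la\eta,[\xi,\zeta]_{\p}\ra=0$ by checking it on the two types of generators ($V$-vectors and diagonal vectors $D_x+x$): the mixed $VV$--center term reduces precisely to $(1-1)\la\pi(z)v,w\ra=0$, which is what forces the coefficient $1$ in the diagonal; the purely diagonal term becomes the $\operatorname{ad}$-invariance of $\pint_\h$; and the remaining term becomes the skew-symmetry of $\pi(x)$. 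Finally, $\bigcap_{x}\ker\pi(x)=0$ forces the center to equal $\h$ and rules out a flat $V$-direction, while $\ker\pi=0$ rules out a central flat direction; together these say $(N,\pint)$ has no Euclidean de Rham factor.

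For the converse, suppose $(N,\pint)$ is naturally reductive without Euclidean factor and write $\n=\z\oplus\v$ with $j\colon\z\to\mathfrak{so}(\v)$ as in \eqref{jz}. Absence of a flat factor gives $\bigcap_z\ker j(z)=0$ on $\v$ together with injectivity of $j$ on $\z$ (since $\ker j=\z\cap(\n')^\perp$). The heart of the argument is Gordon's structural consequence of natural reductivity: $j(\z)$ is a Lie subalgebra of $\mathfrak{so}(\v)$, i.e. $[j(z),j(z')]\in j(\z)$ for all $z,z'\in\z$. Granting this, I would define a bracket on $\z$ by $[z,z']_\h:=j^{-1}\big([j(z),j(z')]\big)$, so that $\h:=(\z,[\,\cdot\,,\,\cdot\,]_\h)$ is a Lie algebra and $j\colon\h\to\mathfrak{so}(\v)$ is by construction an injective homomorphism; since $\mathfrak{so}(\v)$ is compact and $j$ is injective, $\h$ is a compact Lie algebra, and the skew-symmetry relations coming from natural reductivity give that $\pint_\h:=\pint|_\z$ is $\operatorname{ad}$-invariant. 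Setting $V:=\v$ and $\pi:=j$, equation \eqref{jz} reads exactly as \eqref{eq:pi_z}, while injectivity of $j$ and $\bigcap_z\ker j(z)=0$ are faithfulness and the absence of trivial subrepresentations. Hence $(\n,\pint)\cong(\n(\h,V),\pint)$.

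The main obstacle I expect is precisely the structural step in the converse --- extracting the compact Lie algebra $\h$ and the homomorphism property $[j(\z),j(\z)]\subset j(\z)$ from natural reductivity, together with the $\operatorname{ad}$-invariance of $\pint|_\z$. This is where one must use the splitting $\g=\k\oplus\n$ of the full isometry algebra and analyze how the reductive complement deviates from $\n$ by a derivation-valued map, exactly the point at which Gordon's characterization in \cite{G} is needed. A secondary care point is to ensure that natural reductivity with respect to the transitive subgroup $G=H\ltimes N$ is genuinely upgraded to natural reductivity with respect to the full group $I(N,\pint)$.
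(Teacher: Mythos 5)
The paper does not prove this statement---it is quoted verbatim from \cite{Lau}---so the only comparison available is with the argument of \cite{Lau}, which your outline essentially reproduces. Your forward direction is sound: each $D_x$ is a skew-symmetric derivation, the diagonal complement $\p=V\oplus\{D_x+x : x\in\h\}$ is $\operatorname{Ad}(H)$-invariant, and the three cancellations you list do verify the naturally reductive identity for $G=H\ltimes N$; the absence of a Euclidean factor follows, as you say, from $\ker\pi=0$ and $\bigcap_x\ker\pi(x)=0$, which give $\n'=\z=\h$. The two issues you flag are genuine and are exactly the content imported from \cite{G}: the upgrade from natural reductivity with respect to the transitive subgroup $H\ltimes N$ to natural reductivity with respect to the full group ${\rm I}(N,\pint)=\bigl(\operatorname{Aut}(\n)\cap O(\n,\pint)\bigr)\ltimes N$, and, in the converse, the fact that natural reductivity forces $[j(\z),j(\z)]\subset j(\z)$ with $\pint|_\z$ ad-invariant for the transported bracket. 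Granting Gordon's characterization, the remaining steps of your converse (compactness of $\h$ as a subalgebra of $\mathfrak{so}(\v)$, faithfulness of $\pi=j$ from the absence of a Euclidean factor, absence of trivial subrepresentations from $\v\cap\z=0$) are correct, so your proposal is an accurate reconstruction of the cited proof rather than a new route.
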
 

\begin{remark}\label{rem:skt-nat-red} Let $ (\g ,\h )$ be a  compact semisimple irreducible symmetric pair and consider the metric Lie algebra $(\n (\g ,\h ),\pint)$ constructed  as in \eqref{eq:sspair}. It follows from Proposition \ref{prop:Tam}  that the corresponding simply connected $2$-step nilmanifold $(N(\h, V), \pint)$ is     contained in the class of naturally reductive $2$-step nilmanifolds  by taking $V=\m$ and  $\pi(z)=\ad_z |_\m$, for $z\in \h$, where $\ad$ is the adjoint representation of $\g$. Therefore, the pluriclosed metric from Proposition \ref{nat-red-skt} is naturally reductive.
\end{remark}

Given a compact Lie algebra $\h$, $(\pi, V)$ a  real representation of $\h$ and a $\pi(\h)$-invariant inner product $\pint$ on $V$, decompose $V$ into an orthogonal direct sum of isotypic components: 
\begin{equation}\label{isotypic} V=V_1^{\oplus r_1}\oplus \cdots \oplus V_k^{\oplus r_k}, \quad V_i \text{ irreducible } V_i\not \simeq V_j, \; i\neq j,
\end{equation}
where   $V_i^{\oplus r_i}=V_i \oplus \cdots \oplus V_i $ ($r_i$ copies). The positive integer $r_i$ is called the multiplicity of $V_i$. Since each $V_i$ is a real irreducible representation, we have that 
\[ \text{End}\,_\h (V_i)=\{T\in \text{End}\,(V_i): T\circ \pi(x)=\pi(x)\circ T \text{ for all } x\in \h \} \]
 is a real division associative algebra, hence 
 End$\,_\h (V_i)= \R, \C$ or $\H$, the real and complex
numbers or the quaternions, respectively. We say that $V_i$ is of real, complex or quaternionic type if 
 End$\,_\h (V_i)= \R, \C$ or $\H$, respectively. It follows that 
\begin{equation}\label{intertwining}
\text{End}\,_\h (V)=\g\l(r_1, \F_1) \oplus \cdots \oplus \g\l(r_k, \F_k),
\end{equation}
where $\F_i=\R, \C$ or $\H$ depending on the type of $V_i$. 

Let $\h$  be a compact Lie algebra,  $(\pi ,W)$ an irreducible real representation of $\h$ and consider the representation $W^{\oplus r}, \, r>0$ 
($r$ copies). Fix a $\pi(\h )$-invariant inner product $\pint$ on $W^{\oplus r}$. It follows that  $\text{End}\,_\h (W^{\oplus r}) =\g\l(r,\F)$ where $\F= \R, \C$ or $\H$. Therefore,
\begin{equation}\label{eq:cases} \text{End}\,_\h (W^{\oplus r})\cap O\left(W^{\oplus r} , \pint \right) =
\begin{cases} O(r), & \text{ if } \F= \R, \\
U(r)=GL(r, \C)\cap O(2r), & \text{ if } \F= \C, \\
Sp(r)=GL(r,\H)\cap U(2r), & \text{ if } \F= \H . 
\end{cases}
\end{equation} 


\begin{lemma}\label{lem:orth} 
 Let $\h$  be a compact Lie algebra,  $(\pi ,W)$ an irreducible real representation of $\h$ and consider the representation $W^{\oplus r}, \, r>0$ 
($r$ copies). Fix a $\pi(\h )$-invariant inner product $\pint$ on $W^{\oplus r}$ and assume that any of the following conditions holds: 
\begin{enumerate}
\item[$\ri$]  $W$ is of real type and $r$ is even, 
\item[$\rii$]  $W$ is of complex or quaternionic type.  
\end{enumerate}
Then there exists $J\in \operatorname{End}\,_\h (W^{\oplus r})\cap O\left(W^{\oplus r} , \pint \right) $ such that  $J^2=-I$.
\end{lemma}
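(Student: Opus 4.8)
The plan is to reduce the existence of $J$ to the explicit description of the group $\operatorname{End}_\h(W^{\oplus r}) \cap O(W^{\oplus r}, \pint)$ recorded in \eqref{eq:cases}. Since $W$ is irreducible, $\operatorname{End}_\h(W) = \F$ is one of $\R$, $\C$, $\H$, and $\operatorname{End}_\h(W^{\oplus r})$ is the full matrix algebra $\g\l(r, \F) = M_r(\F)$; this identification is an isomorphism of real associative algebras sending $\operatorname{id}_{W^{\oplus r}}$ to the identity matrix $I_r$, and intersecting with the orthogonal group yields $O(r)$, $U(r)$ or $Sp(r)$ according to the type of $W$. Consequently I would only need to exhibit, in each of these three groups, an element $J$ with $J^2 = -I_r$: under the algebra isomorphism $-I_r$ corresponds to $-\operatorname{id}$, and belonging to the intersection guarantees that $J$ is simultaneously $\h$-equivariant and orthogonal.

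In case (ii) I would take scalar matrices. If $W$ is of complex type then $\F = \C$ and $iI_r \in U(r)$ satisfies $(iI_r)^2 = -I_r$, with $(iI_r)^\ast (iI_r) = I_r$. If $W$ is of quaternionic type then $\F = \H$ and, after fixing a unit imaginary quaternion $q$ (so $q^2 = -1$), the scalar $qI_r \in Sp(r)$ satisfies $(qI_r)^2 = q^2 I_r = -I_r$ and $(qI_r)^\ast(qI_r) = (-q)(q)I_r = I_r$. Either way the associated endomorphism lies in $\operatorname{End}_\h(W^{\oplus r}) \cap O(W^{\oplus r}, \pint)$ and squares to $-I$.

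In case (i) the division algebra is $\F = \R$ and the group is $O(r)$. Writing $r = 2m$, I would use the block matrix
\[ J = \begin{pmatrix} 0 & -I_m \\ I_m & 0 \end{pmatrix}, \]
which is orthogonal and satisfies $J^2 = -I_r$. The evenness of $r$ is exactly what makes this possible: were $r$ odd, then $\det(J)^2 = \det(J^2) = \det(-I_r) = (-1)^r = -1$, which no real matrix can satisfy, so the hypothesis on $r$ is unavoidable in the real type.

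I do not anticipate a genuine obstacle, since once \eqref{eq:cases} is in hand the statement collapses to the elementary fact that each of $U(r)$, $Sp(r)$ and $O(2m)$ contains an element squaring to $-I$. The only point I would check carefully is that the identification $\operatorname{End}_\h(W^{\oplus r}) \cong M_r(\F)$ is unital, so that the matrix relation $J^2 = -I_r$ really translates into $J^2 = -\operatorname{id}$, and that it respects the inner product in the sense encoded by \eqref{eq:cases}; both are inherent in the derivation of that equation.
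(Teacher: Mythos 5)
Your proposal is correct and follows essentially the same route as the paper: case (i) uses the same block construction (the paper writes it as $J(u,w)=(-w,u)$ on $W^{\oplus k}\oplus W^{\oplus k}$, which is your matrix $\left(\begin{smallmatrix}0&-I_m\\ I_m&0\end{smallmatrix}\right)$), and case (ii) reduces to \eqref{eq:cases}, where the paper simply asserts the existence of an element of $U(r)$ or $Sp(r)$ squaring to $-I$ while you exhibit the scalar matrices $iI_r$ and $qI_r$ explicitly. Your determinant remark explaining why evenness of $r$ is unavoidable in the real case is a pleasant addition not present in the paper, but the argument is otherwise the same.
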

\begin{proof}
Assume that (i) holds and let $r=2k$. Decompose $W^{\oplus r}=W^{\oplus k}\oplus W^{\oplus k}$ orthogonally and let $J(u,w)=(-w,u), \; u,w\in W^{\oplus k}$. Clearly, $J$ satisfies the conclusion of the statement.

If (ii) holds, then $\operatorname{End}\,_\h (W^{\oplus r}) \cap O\left(W^{\oplus r} , \pint \right)= U(r)$ or $Sp(r)$ (see \eqref{eq:cases}). Therefore, there exists 
$J\in \operatorname{End}\,_\h (W^{\oplus r}) \cap O\left(W^{\oplus r} , \pint \right)$ such that  $J ^2=-I$, and the lemma follows. 
\end{proof}

\begin{remark}
It was proved in \cite[Corollary 5.16]{BM} that a naturally reductive $2$-step nilpotent Lie group endowed with a left invariant metric does not admit orthogonal bi-invariant complex structures. In contrast with this result, the next proposition shows that there are plenty of naturally reductive $2$-step homogeneous Riemannian nilmanifols admitting orthogonal abelian complex structures.  
\end{remark}

\begin{proposition}\label{natred} Let $(N(\h, V), \pint)$ be any naturally reductive $2$-step homogeneous Riemannian nilmanifold  and let $s=0$ or $1$, where   $\dim \h \equiv s \pmod{2}$.    Then, the following conditions are equivalent:
\begin{itemize}
\item[$\ri$] 
 $(\R^s\oplus  N(\h, V), \pint)$ admits an orthogonal abelian complex structure,
\item[$\rii$]   
each irreducible subrepresentation of $V$ of real type has even multiplicity. 
\end{itemize}
\end{proposition}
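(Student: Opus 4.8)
The plan is to translate ``orthogonal abelian complex structure'' into a condition on the representation $(\pi,V)$ and then read it off from the isotypic decomposition. Write $\n=\R^s\oplus\n(\h,V)=\R^s\oplus\h\oplus V$. Its center is $\z=\R^s\oplus\h$ and its commutator lies in $[V,V]\subset\h$, so taking $\z_0=\z$ and $\v=V=\z^{\perp}$ the map $j\colon\z\to\mathfrak{so}(V)$ of \eqref{jz} satisfies $j|_{\R^s}=0$ and $j|_\h=\pi$ (by \eqref{eq:pi_z}). By Lemma~\ref{rem:abelian}~(i) together with the remark following it, an orthogonal complex structure $J$ on $\n$ is abelian if and only if $\z$ is $J$-invariant and $j(z)$ commutes with $J_\v=J|_V$ for every $z\in\z$; since $j$ vanishes on $\R^s$, this last requirement reduces to $J_V\in\operatorname{End}_\h(V)$.

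First I would note that an orthogonal $J$ leaving $\z$ invariant automatically preserves $V=\z^{\perp}$, so $J=J_1\oplus J_V$ with $J_1\in O(\z)$, $J_1^2=-I$, and $J_V\in\operatorname{End}_\h(V)\cap O(V)$, $J_V^2=-I$; conversely any such pair assembles into an orthogonal abelian $J$ (the bracket conditions on the central part being vacuous). The factor $J_1$ exists exactly when $\dim\z=s+\dim\h$ is even, which is ensured by the choice of $s$. Hence (i) is equivalent to the existence of some $J_V\in\operatorname{End}_\h(V)\cap O(V)$ with $J_V^2=-I$.

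It then remains to decide when such a $J_V$ exists, and here the isotypic decomposition \eqref{isotypic} does the work. Any $J_V\in\operatorname{End}_\h(V)$ preserves each component $V_i^{\oplus r_i}$, since there are no nonzero $\h$-maps between non-isomorphic isotypic summands, and being orthogonal with square $-I$ it restricts to an element of $\operatorname{End}_\h(V_i^{\oplus r_i})\cap O(V_i^{\oplus r_i})$ squaring to $-I$; conversely such restrictions assemble into a $J_V$. By \eqref{intertwining} and \eqref{eq:cases} this intersection is $O(r_i)$, $U(r_i)$ or $Sp(r_i)$ according to the type of $V_i$, and Lemma~\ref{lem:orth} supplies a square root of $-I$ in the complex and quaternionic cases for every $r_i$, and in the real case when $r_i$ is even. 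The only genuine obstruction, which I expect to be the crux, is that a real-type component with $r_i$ odd admits none: an $M\in\operatorname{End}_\h(V_i^{\oplus r_i})\cong\mathfrak{gl}(r_i,\R)$ with $M^2=-I$ would force $\det(M)^2=(-1)^{r_i}=-1$, which is impossible over $\R$. Assembling the components, $J_V$ exists precisely when every real-type isotypic component has even multiplicity, which is condition (ii), and the equivalence follows.
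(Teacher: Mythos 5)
Your proposal is correct and follows essentially the same route as the paper: reduce via Lemma~\ref{rem:abelian}~(i) (and the remark extending it to $\z_0=\z$) to the existence of $J_V\in\operatorname{End}_\h(V)\cap O(V)$ with $J_V^2=-I$, restrict to isotypic components using \eqref{intertwining}, invoke Lemma~\ref{lem:orth} for the constructions, and observe that a real-type component of odd multiplicity is the only obstruction. The only difference is cosmetic: you make the obstruction explicit with the determinant computation $\det(M)^2=(-1)^{r_i}$, where the paper simply asserts that a square root of $-I$ in $\g\l(r_i,\R)$ exists if and only if $r_i$ is even.
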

\begin{proof} In case $s=1$, when $\dim \h$ is odd, the inner product on $N(\h, V)$ is extended to $\R \oplus  N(\h, V)$ so that $\R$ is orthogonal to $\n$ and the representation $(V,\pi)$ of $\h$ is extended to $\R\oplus  \g$ by $\pi(\R)=0$, so that the corresponding naturally reductive space has a 1-dimensional Euclidean factor.

 Let $J$ be an orthogonal abelian complex structure on $(\R^s\oplus  N(\h, V), \pint)$, then $J\z\subset \z$ and $JV\subset V$, where $\z=\R^s\oplus  \h$. The maps $j(z), \; z\in \z$, defined in \eqref{jz} are in this case given by $\pi(z)$ where $(V,\pi)$ is the real representation of $\R^s \oplus \h$.   It follows from  Lemma \ref{rem:abelian} (i)  that $J\in \text{End}\,_\h (V)=\g\l(r_1, \F_1) \oplus \cdots \oplus \g\l(r_k, \F_k)$ (see \eqref{intertwining}), where $V=V_1^{\oplus r_1}\oplus \cdots \oplus V_k^{\oplus r_k}  $ is the orthogonal decomposition of $V$ into isotypic components. Let $\pint _i$ be the restriction of $\pint$ to $V_i^{\oplus r_i}$.   The restriction $J_i$ of $J$ to $V_i^{\oplus r_i}$ satisfies $J_i\in \text{End}\,_\h (V_i^{\oplus r_i})\cap O\left(V_i^{\oplus ri} , \pint _i \right)$ and 
 $J_i^2=- I$, where $I$ is the identity on $V_i^{ \oplus r_i}$.  If $V_i$ is irreducible of real type, then $\text{End}\,_\h (V_i^{\oplus r_i})$ is isomorphic to   $\g\l(r_i, \R)$, so we would have  ${J_i}\in \g\l(r_i, \R)$ such that ${J_i}^2=- I$ where $I$ is the identity on $\R^{r_i}$.    Such an endomorphism  exists  if and only if $r_i$ is even.  

For the converse, assuming that  $r_i$ is even for every isotypic component $V_i^{\oplus r_i}$ of $V$ with $V_i$ irreducible of real type, an orthogonal abelian complex structure on $(\R^s\oplus  N(\h, V), \pint)$ is obtained as follows. Take an arbitrary orthogonal  endomorphism $J_\z$ of $\z=\R^s\oplus  \h$ such that $J_\z^2=-I$ where $I$ is the identity on $\z$. Lemma \ref{lem:orth} implies that, for each $i$, there exists $J_i\in \text{End}\,_\h (V_i^{r_i}) \cap O\left(V_i ^{\oplus r_i} , \pint _i \right)$,   such that ${J_i}^2=- I$. 
Define $J_V$ on $V$ by $J_V|_{V_i^{\oplus r_i}}=J_i, \; i=1, \dots , k$. It follows from Lemma \ref{rem:abelian}  (i) that $J\in \text{End}\,(\R^s\oplus  \n )$ defined by $J|_\z=J_\z$, $J|_V=J_V$ is an orthogonal abelian complex structure on $(\R^s\oplus  N(\h, V), \pint)$. 
\end{proof}
 The above proposition has its counterpart in the case of abelian hypercomplex structures. In fact, the  result analogous to Proposition \ref{natred} in the hypercomplex case is Proposition \ref{natred_hcx} below, whose proof makes use of the following result, similar to Lemma \ref{lem:orth}.   
\begin{lemma}\label{lem:orth2} Let $\h$  be a compact Lie algebra,  $(\pi ,W)$ an irreducible real representation of $\h$ and consider the representation $W^{\oplus r}, \, r>0$
($r$ copies). Fix a $\pi(\h )$-invariant inner product $\pint$ on $W^{\oplus r}$ and assume that any of the following conditions holds:  
\begin{enumerate}
\item[$\ri$]  $W$ is of real type and $r \equiv 0 \pmod{4}$,   
\item[$\rii$]  $W$ is of complex type and $r$ is even, 
\item[$\riii$]  $W$ is of quaternionic type.  
\end{enumerate}
Then there exist $J_\al \in \operatorname{End}\,_\h (W^{\oplus r})\, \cap\, O\left(W^{\oplus r},\pint \right),\, \al=1,2,3,$ such that \[
J^2_\al=-I, \quad \al=1,2,3,\qquad  J_1J_2=-J_2J_1=J_3.\]
\end{lemma}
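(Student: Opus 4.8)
The plan is to reduce the whole statement to a single assertion about the group of orthogonal intertwiners, parallel to the proof of Lemma~\ref{lem:orth}, and then to exhibit an explicit quaternionic triple in each case. By \eqref{intertwining} we have $\operatorname{End}_{\h}(W^{\oplus r})=\g\l(r,\F)$ with $\F=\R,\C$ or $\H$ according to the type of $W$, and by \eqref{eq:cases} the subgroup of orthogonal intertwiners is
\[
G:=\operatorname{End}_{\h}\bigl(W^{\oplus r}\bigr)\cap O\bigl(W^{\oplus r},\pint\bigr)=
\begin{cases} O(r), & \F=\R,\\ U(r), & \F=\C,\\ Sp(r), & \F=\H.\end{cases}
\]
Every element of $G$ is by construction orthogonal on $W^{\oplus r}$ and commutes with $\pi(\h)$, and $G$ is a group. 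Hence it suffices to produce $J_1,J_2\in G$ with $J_1^2=J_2^2=-I$ and $J_1J_2=-J_2J_1$: setting $J_3:=J_1J_2\in G$ then automatically gives $J_3^2=-I$ and $J_1J_2=J_3=-J_2J_1$, so that $J_1,J_2,J_3$ satisfy all the required relations. I would stress that the hypotheses $r\equiv 0 \pmod 4$, $r$ even, and no restriction on $r$ are exactly the conditions under which the multiplicity space $\F^r$ can be made into a module over $\H$, i.e.\ can carry a hypercomplex structure.

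For the quaternionic case $\riii$ the construction is immediate: the quaternionic scalar units $\tilde\imath,\tilde\jmath,\tilde k$, acting on each copy of $W$ through the imaginary units of $\operatorname{End}_{\h}(W)=\H$, lie in $Sp(r)$ by \eqref{eq:cases} and satisfy $\tilde\imath^2=\tilde\jmath^2=-I$ and $\tilde\imath\,\tilde\jmath=-\tilde\jmath\,\tilde\imath$, so one takes $J_1=\tilde\imath$, $J_2=\tilde\jmath$; this works for every $r$. For the complex case $\rii$, writing $r=2m$, I would use the block matrices
\[
J_1=\begin{pmatrix} iI_m & 0\\ 0 & -iI_m\end{pmatrix},\qquad
J_2=\begin{pmatrix} 0 & I_m\\ -I_m & 0\end{pmatrix}\in U(2m)=U(r),
\]
which are unitary, square to $-I$ and anticommute. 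For the real case $\ri$, writing $r=4k$, I would identify the multiplicity space $\R^r\cong\H^k$ and take $J_1,J_2$ to be left multiplication by $i$ and $j$; these belong to $O(4k)=O(r)$, square to $-I$, and anticommute since $ij=-ji$. In each case $J_3=J_1J_2$ completes the triple.

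The step I expect to require the most care is the complex case, where the naive choice of quaternionic scalars fails: with respect to the complex structure defining $U(r)$, left multiplication by $j$ is conjugate-linear and therefore does not lie in $\g\l(r,\C)$. This is precisely why I use the honestly $\C$-linear block matrices above rather than quaternionic scalars, and it is also what forces $r$ to be even there (and $r\equiv 0\pmod 4$ in the real case). Once the operators are seen to be $\F$-linear and to satisfy the quaternionic relations, their orthogonality on $W^{\oplus r}$ is \emph{not} a separate computation but is guaranteed by membership in the compact groups $O(r),U(r),Sp(r)$ recorded in \eqref{eq:cases}; checking $J_\al^2=-I$ and $J_1J_2=-J_2J_1$ is then a routine matrix calculation.
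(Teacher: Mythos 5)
Your proof is correct and follows essentially the same route as the paper: in the real case your left multiplications by $i$ and $j$ on $\H^k\cong\R^{4k}$ are exactly the paper's explicit operators $J_1(x,y,z,w)=(-y,x,-w,z)$ and $J_2(x,y,z,w)=(-z,w,x,-y)$, your complex-case block matrices coincide (up to sign) with the paper's $J_1(u,w)=(Ju,-Jw)$, $J_2(u,w)=(-w,u)$, and the quaternionic case is settled in both arguments by appealing to \eqref{eq:cases} and the group $Sp(r)$. Your remark that conjugate-linearity of $j$ is what forces the block construction in the complex case is a nice explicit articulation of why the paper invokes Lemma \ref{lem:orth} there, but it does not change the substance of the argument.
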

\begin{proof}
If (i) holds, let $r=4k$ and decompose $W^{\oplus r}=W^{\oplus k}\oplus W^{\oplus k} \oplus W^{\oplus k}\oplus W^{\oplus k}$ orthogonally. For $x,y,z,w\in W^{\oplus k}$,  set
\[ J_1(x,y,z,w)=(-y,x,-w,z), \quad J_2(x,y,z,w)=(-z,w,x,-y), \quad J_3=J_1J_2.\]
It follows that these endomorphisms satisfy the required properties. 

Assume next that (ii)  holds, so  $r=2s$ and decompose $W^{\oplus r}=W^{\oplus s}\oplus W^{\oplus s}$ orthogonally. Since $W$ is of complex type, Lemma \ref{lem:orth} gives that there exists $J\in \operatorname{End}\,_\h (W^{\oplus s}) \cap O\left( W^{\oplus s}, \pint \right)$ such that  $J^2=-I$, where we still denote by  $\pint$  the restriction of $\pint$ to $W^{\oplus s}$. For $u,w \in W^{\oplus s}$, set 
\[ J_1(u,w)=(Ju,-Jw), \qquad  J_2(u,w)=(-w,u), \qquad J_3=J_1J_2.           \]
We have that $J_1,\, J_2$ and $J_3$  satisfy the conclusion of the statement. 

Finally, if (iii) holds, then  $\operatorname{End}\,_\h (W^{\oplus r})\cap O\left(W^{\oplus r} , \pint \right)=Sp(r)$ so there exist $J_\al \in \operatorname{End}\,_\h (W^{\oplus r})\cap O\left(W^{\oplus r} , \pint \right)$, $\al=1,2,3$, satisfying the required properties.

%
\end{proof}

We recall that a hypercomplex structure on a Lie algebra $\g$ is a triple $\hcx$, $\al=1,2,3$, of complex structures satisfying  $J_1J_2=-J_2J_1=J_3$. 
The hypercomplex structure is said to be abelian when each complex structure $J_\al$ is abelian, $\al=1,2,3$.
An inner product $\pint$ on  $(\g, \hcx )$ is called hyper-Hermitian when $\pint$ is Hermitian with respect to $J_\al$, $\al=1,2,3$. 
The study of abelian hypercomplex structures on Lie groups of Heisenberg type (see \cite{K}), was carried out  in  \cite{Bar}. 
\begin{remark}\label{rem:hkt}
The study of left invariant abelian hypercomplex structures on nilmanifolds  has an interesting application in hyper-K\"ahler with torsion (or HKT) geometry \cite{HP}. 
It was shown in \cite{DF} that on a nilmanifold endowed with such a  structure,  any left invariant hyper-Hermitian metric is automatically HKT. Indeed, it was  proved in  \cite{DF}  that   a hyper-Hermitian structure $(\hcx ,\pint )$ on $\g$ is HKT if and only if the following condition is satisfied:
\begin{align}
    \la [J_1x,J_1y],z\ra +\la [J_1y,J_1z],x\ra+ & \la [J_1z,J_1x],y\ra = \nonumber \\ 
 & = \la [J_2x,J_2y],z\ra +\la [J_2y,J_2z],x\ra +
\la J_2z,J_2x],y\ra \label{inv_hkt}\\
&  =\la [J_3x,J_3y],z\ra+\la [J_3y,J_3z],x\ra +\la J_3z,J_3x],y\ra \nonumber
\end{align}
for all  $x,y,z \in {\g}$. It is clear that \eqref{inv_hkt} holds when $\hcx$ is abelian.
\end{remark}
We give next necessary and sufficient conditions on a naturally reductive $2$-step homogeneous Riemannian nilmanifold to admit an abelian hypercomplex structure such that the corresponding Riemannian metric is hyper-Hermitian. As a consequence, we obtain a large family of naturally reductive metrics which are HKT.

\begin{proposition}\label{natred_hcx}
Let $(N(\h, V), \pint)$ be any naturally reductive $2$-step homogeneous Riemannian nilmanifold without Euclidean factor and  set $s=4-j$ if $\dim\h \equiv j \pmod 4$, $1\leq j \leq 4$.  Then, the following conditions are equivalent:
\begin{itemize}
\item[$\ri$] 
$(\R^s\oplus  N(\h, V), \pint)$ admits an  abelian hypercomplex structure such that $\pint$ is hyper-Hermitian,   
\item[$\rii$] each irreducible subrepresentation $V_i$ of $V$ of real type has  multiplicity $r_i\equiv 0 \pmod 4$ and  each irreducible subrepresentation  of $V$ of complex type has even  multiplicity. 
\end{itemize}
In particular, if any of these equivalent conditions is satisfied, then the naturally reductive inner product $\pint$ is HKT. 
\end{proposition}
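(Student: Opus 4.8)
The plan is to mirror the proof of Proposition~\ref{natred}, replacing the single abelian complex structure by an abelian hypercomplex triple and using Lemma~\ref{lem:orth2} in place of Lemma~\ref{lem:orth}. Throughout I write $\n=\h\oplus V$ for the naturally reductive Lie algebra and $\z=\R^s\oplus\h$ for the center of $\R^s\oplus\n$; the choice $s=4-j$ with $\dim\h\equiv j\pmod 4$ guarantees $\dim\z\equiv 0\pmod 4$. Since $\R^s$ acts trivially, the condition defining $\text{End}_{\R^s\oplus\h}(V)$ imposes nothing on the $\R^s$-factor, so the isotypic decomposition of $V$ and the commutant $\text{End}_\h(V)=\g\l(r_1,\F_1)\oplus\cdots\oplus\g\l(r_k,\F_k)$ of \eqref{intertwining} are unchanged when $\h$ is enlarged to $\R^s\oplus\h$.

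I would first prove $\rii\Rightarrow\ri$. By hypothesis each real-type block has $r_i\equiv 0\pmod 4$ and each complex-type block has $r_i$ even, so Lemma~\ref{lem:orth2} applies to every isotypic summand $V_i^{\oplus r_i}$ (in case $\ri$, $\rii$ or $\riii$ according to its type) and yields orthogonal $J_1^i,J_2^i,J_3^i\in\text{End}_\h(V_i^{\oplus r_i})$ satisfying the quaternionic relations. On $\z$, whose dimension is divisible by $4$, I choose any orthogonal quaternionic triple $J_1^\z,J_2^\z,J_3^\z$. Setting $J_\al|_\z=J_\al^\z$ and $J_\al|_{V_i^{\oplus r_i}}=J_\al^i$ defines orthogonal endomorphisms $J_\al$ of $\R^s\oplus\n$ with $J_\al^2=-I$ and $J_1J_2=-J_2J_1=J_3$, as these identities hold blockwise. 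Each $J_\al$ preserves $\z$ and restricts on $V$ to an element of $\text{End}_\h(V)$, hence commutes with every $j(z)=\pi(z)$; by Lemma~\ref{rem:abelian}~$\ri$ each $J_\al$ is an abelian complex structure, and orthogonality makes $\pint$ hyper-Hermitian, establishing $\ri$.

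For $\ri\Rightarrow\rii$ I start from an abelian hyper-Hermitian triple $\{J_\al\}$. Lemma~\ref{rem:abelian}~$\ri$ shows $\z$ and $V$ are $J_\al$-invariant and that each $J_\al|_V$ commutes with $\pi(\h)$, so $\{J_\al|_V\}$ is a quaternionic triple in the block-diagonal algebra $\text{End}_\h(V)$; restricting to $V_i^{\oplus r_i}$ embeds $\H$ into $\text{End}_\h(V_i^{\oplus r_i})=M_{r_i}(\F_i)$. When $\F_i=\R$ this makes $\R^{r_i}$ an $\H$-module, forcing $4\mid r_i$. When $\F_i=\C$, the central complex structure $I_0=i\cdot\mathrm{Id}$ of $M_{r_i}(\C)$ commutes with the embedded $\H$, so $I_0$ together with the $J_\al|_{V_i^{\oplus r_i}}$ generates a copy of $\C\otimes_\R\H\cong M_2(\C)$; simplicity of $M_2(\C)$ makes this a $\C$-algebra embedding $M_2(\C)\hookrightarrow M_{r_i}(\C)$, so $\C^{r_i}$ is an $M_2(\C)$-module and $r_i$ is even. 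No restriction arises when $\F_i=\H$. This is precisely condition $\rii$. I expect the main obstacle to be this last algebraic step in the complex-type case: unlike the real and quaternionic cases, the parity of $r_i$ is not forced by a dimension count, and one must exploit the centrality of $I_0$ to upgrade the real embedding $\H\hookrightarrow M_{r_i}(\C)$ to a complex-linear embedding of $M_2(\C)$.

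Finally, for the HKT assertion I would invoke Remark~\ref{rem:hkt}: identity~\eqref{inv_hkt} holds automatically whenever the hypercomplex structure is abelian, so the triple produced in $\rii\Rightarrow\ri$ renders the naturally reductive metric $\pint$ hyper-K\"ahler with torsion.
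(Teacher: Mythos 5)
Your proof is correct and follows the paper's intended route: the paper's own argument simply says it is analogous to Proposition \ref{natred} with Lemma \ref{lem:orth2} in place of Lemma \ref{lem:orth}, plus Remark \ref{rem:hkt} for the HKT claim, which is exactly the structure you use. One small remark: your $\C\otimes_\R\H\cong M_2(\C)$ argument in the complex-type case is valid but heavier than needed (and your aside that the parity of $r_i$ is ``not forced by a dimension count'' is not quite right), since the unital embedding $\H\hookrightarrow M_{r_i}(\C)=\operatorname{End}_\C(\C^{r_i})$ already makes $\C^{r_i}$ a module over the division algebra $\H$, so $4$ divides $\dim_\R\C^{r_i}=2r_i$ and $r_i$ is even by the same dimension count you use in the real case.
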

\begin{proof} It follows from Lemma \ref{rem:abelian} (i) that $\hcx$ is an abelian hypercomplex structure on $(\R^s\oplus  N(\h, V), \pint)$ such that $\pint$ is hyper-Hermitian if and only if $\R^s\oplus\h$ and $V$ are $J_\al$-invariant and $J_\al |_V\in  \text{End}\,_\h (V)\cap O(V, \pint)=\left(\g\l(r_1, \F_1) \oplus \cdots \oplus \g\l(r_k, \F_k)\right) \cap O(V, \pint)$ for $\al=1,2,3$, where $V=V_1^{\oplus r_1}\oplus \cdots \oplus V_k^{\oplus r_k}  $ is the orthogonal decomposition of $V$ into isotypic components. The proof is analogous to that of Proposition \ref{natred} by making use 
of Lemma \ref{lem:orth2}. 

The last assertion follows from Remark \ref{rem:hkt}, since the hypercomplex structure is abelian.
\end{proof}

\

\ 

\end{document}